\numberwithin{equation}{section}
\numberwithin{figure}{section}
\theoremstyle{plain}
\newtheorem{thm}{\protect\theoremname}
\theoremstyle{definition}
\newtheorem{defn}[thm]{\protect\definitionname}
\theoremstyle{remark}
\newtheorem{rem}[thm]{\protect\remarkname}
\theoremstyle{plain}
\newtheorem{lem}[thm]{\protect\lemmaname}
\theoremstyle{plain}
\newtheorem{prop}[thm]{\protect\propositionname}
\theoremstyle{plain}
\newtheorem{cor}[thm]{\protect\corollaryname}
\theoremstyle{plain}
\newtheorem{hyp}[thm]{\protect\hypothesisname}
\providecommand{\corollaryname}{Corollary}
\providecommand{\definitionname}{Definition}
\providecommand{\lemmaname}{Lemma}
\providecommand{\propositionname}{Proposition}
\providecommand{\remarkname}{Remark}
\providecommand{\theoremname}{Theorem}
\providecommand{\hypothesisname}{Hypothesis}
\newcommand{\cA}{\mathcal{A}}
\newcommand{\cB}{\mathcal{B}}
\newcommand{\cC}{\mathcal{C}}
\newcommand{\cF}{\mathcal{F}}
\newcommand{\cG}{\mathcal{G}}
\newcommand{\cI}{\mathcal{I}}
\newcommand{\cL}{\mathcal{L}}
\newcommand{\cN}{\mathcal{N}}
\newcommand{\cP}{\mathcal{P}}
\newcommand{\cS}{\mathcal{S}}
\newcommand{\cX}{\mathcal{X}}
\newcommand{\EE}{\mathbb{E}}
\newcommand{\NN}{\mathbb{N}}
\newcommand{\PP}{\mathbb{P}}
\newcommand{\RR}{\mathbb{R}}
\newcommand{\TT}{\mathbb{T}}
\newcommand{\la}{\langle}
\newcommand{\ra}{\rangle}
\newcommand{\scC}{\mathscr{C}}
\newcommand{\vertiii}[1]{{\left\vert\kern-0.25ex\left\vert\kern-0.25ex\left\vert #1 
    \right\vert\kern-0.25ex\right\vert\kern-0.25ex\right\vert}}
\newcommand{\eps}{\varepsilon}
\newcommand{\varsig}{\varsigma}
\newcommand{\dd}{\mathop{}\!\mathrm{d}}
\newcommand{\llrr}{\langle \cdot\rangle}
\begin{document}

\title[Regularization of mSHE through irregular perturbation]{Pathwise regularization of the stochastic heat equation with multiplicative noise through irregular  perturbation}

\keywords{Pathwise regularization by noise, stochastic heat equation, generalized parabolic Anderson model, fractional L\'evy processes}
\subjclass[2010]{Primary 60H50, 60H15
; Secondary 60L20}

\author{R\'emi Catellier \and Fabian A. Harang}

\address{R\'emi Catellier:
 Universit\'e C\^ote d Azur, CNRS, LJAD, France}
\email{remi.catellier@univ-cotedazur.fr} 

\address{Fabian A. Harang:
 Department of Mathematics, University of Oslo, P.O. box 1053, Blindern, 0316, OSLO, Norway}
\email{fabianah@math.uio.no} 

\thanks{\emph{Acknowledgments.} 
FH gratefully acknowledges financial support from the STORM project 274410, funded by the Research Council of Norway. We would also thank Nicolas Perkowski for several fruitful  discussions on this topic.}

\begin{abstract}
   Existence and uniqueness of solutions to the stochastic heat equation with multiplicative  spatial noise is studied. In the spirit of pathwise regularization by noise, we show that a perturbation by a sufficiently irregular continuous path establish wellposedness of such equations, even when the drift and diffusion coefficients are given as generalized functions or distributions. In addition we prove regularity of the averaged field associated to a L\'evy fractional stable motion, and use this as an example of a perturbation regularizing the multiplicative stochastic heat equation. 
\end{abstract}

\maketitle 

{
\hypersetup{linkcolor=black}
\tableofcontents
}

\section{Introduction}
The stochastic heat equation with multiplicative noise (mSHE), is given on the form
\begin{equation}\label{eq:full SHE}
    \partial_t u = \Delta u+ b(u)+g(u)\xi, \qquad  u_0\in \cC^\beta,
\end{equation}
where $\xi$ is a space time noise on  $\RR^d$ or $\TT^d$ and $\cC^\beta$ is the Besov H\"older space of $\RR^d$ or $\TT^d$, and $b$ and $g$ are sufficiently smooth functions. 
This equation is a fundamental  stochastic partial differential equation, and is applied for modelling in a diverse selection of natural sciences, ranging from chemistry and biology to physics. 
The existence and uniqueness of \eqref{eq:full SHE} is typically proven under the condition that both $b$ and $g$ are Lipschitz functions of linear growth (see e.g. \cite{HUNUALARTSONG2013}). 
When taking a pathwise approach to a solution theory, even more regularity of these non linear functions may be required (see for instance \cite{Bailleul2019} where $g\in C^3$ is required and Appendix \ref{sec:CL-standard} for a proof a pathwise wellposedness in a simple context). Of course if $g\equiv 0$, then \eqref{eq:full SHE} is known as the (deterministic) non-linear heat equation, for which  uniqueness fails in general under weaker conditions on $b$ than Lipschitz and linear growth. 

Motivated by this, a natural question to ask is if it is possible to prove existence and uniqueness of \eqref{eq:full SHE w t noise} under weaker conditions on $b$ and $g$. 
Inspired by the well known regularization by noise phenomena in stochastic differential equations, one may think that the same principles of regularization would extend to the case of stochastic partial differential equations like \eqref{eq:full SHE w t noise}. In this article,
we aim at giving some insights into this question by investigating  \eqref{eq:full SHE w t noise} in a fully pathwise manner under perturbation by a measurable time (only) dependent path.  In fact, we will prove that a perturbation by a sufficiently irregular path yields wellposedness of the mSHE, even for distributional (generalized functions)  coefficients $g$ and $b$.  In the next section we give a more detailed description of the specific equation under consideration, and the techniques that we apply in order to prove this regularizing effect of the perturbation.

\subsection{Methodology}
 Inspired by the theory of regularization by noise for ordinary or stochastic differential equations, we show that a suitably chosen measurable path $w:[0,T]\rightarrow \RR^d$ provides a regularizing effect on \eqref{eq:full SHE}, by considering the formal equation
\begin{equation}\label{eq:full SHE w t noise}
    \partial_t u=\Delta u+b(u)+g(u)\xi+\dot{\omega}_t,\qquad u_0\in \cC^\beta,
\end{equation}
where $\xi$ is a spatial (distributional) noise taking values in $\RR^d$, and $\dot{\omega}_t$ is the distributional derivative of a continuous path $\omega$. 
To this end, we formulate \eqref{eq:full SHE w t noise} in terms of the non-linear Young framework, developed in \cite{Catellier2016,galeati2020noiseless,galeati2020nonlinear}. We extend this framework to the infinite dimensional setting  adapted to Volterra type integrals appearing when considering the mild formulation of \eqref{eq:full SHE w t noise}. The integration framework developed here is strongly based on the recently developed {\em Volterra sewing lemma} of \cite{harang2019tindel}, and  does not require any semi-group property of the Volterra operator. The integral can therefore be applied to several different  problems relating to infinite dimensional Volterra integration, and thus  we believe that this construction is interesting in itself.

To motivate the methodology of the current paper, consider again \eqref{eq:full SHE w t noise}  and  set $\theta=u-w$ with $\theta_0=u_0\in \cC^\beta$ then formally $\theta$ solves the following integral equation
\begin{equation*}
    \theta_t=P_{t}\theta_0+\int_0^t P_{t-s}  b(\theta_s+\omega_s)\dd s+\int_0^t P_{t-s} \xi g(\theta_s+\omega_s)\dd s,
\end{equation*}
where $P$ is the fundamental solution operator associated with the heat equation, and the product  $P_t \theta_0$ is interpreted as spatial convolution. For simplicity, we will carry out most of our analysis with $b\equiv 0$, as this term is indeed easier to handle than the term with multiplicative noise. The equation we will then consider is given by 
\begin{equation}\label{eq:first theta eq}
    \theta_t=P_{t}\theta_0+\int_0^t P_{t-s} \xi g(\theta_s+\omega_s)\dd s.
\end{equation}
In Section \ref{sec:drifted mshe} we provide a detailed description of how our results can easily be extended to include the drift term with distributional $b$, by simply appealing to the the analysis carried out for the purely multiplicative equation \eqref{eq:first theta eq}. 
Associated to the path $\omega$ and the distribution $g$, define the averaged distribution $T^\omega g:[0,T]\times \RR^d \rightarrow \RR^d $ by the mapping 
\begin{equation*}
    (t,x)\mapsto \int_0^t g (x+\omega_s)\dd s.
\end{equation*}
After proving that for certain paths $\omega$ the distribution $T^\omega g$ is in fact a regular function,  we then consider \eqref{eq:first theta eq}  as a non-linear Young equation of the form
\begin{equation}\label{eq:NLY intro}
    \theta_t = P_t\theta_0+\int_0^t P_{t-s}\xi T^\omega_{\dd s}g(\theta_s), \quad \theta_0\in \cC^\beta.
\end{equation}
Here, the integral is interpreted in an infinite dimensional non-linear Young-Volterra sense. That is, to suit our purpose, we extend the non-linear Young integral to an infinite dimensional setting as well as allowing for the action of a Volterra operator on the integrand. The integral is then constructed as the Banach  valued element
\begin{equation*}
    \int_0^t P_{t-s}T^\omega_{\dd s}g(\theta_s):=\lim_{|\cP|\rightarrow 0} \sum_{[u,v]\in \cP } P_{t-u}\xi T^\omega_{u,v} g(\theta_u),
\end{equation*}
where $T^\omega_{u,v} g:=T^\omega_{v} g-T^\omega_{u} g$. 
We stress that in contrast to the non-linear Young integral used for example in  \cite{Catellier2016,galeati2020noiseless,harang2020cinfinity,galeati2020nonlinear},  the above integral is truly an infinite dimensional object, and extra care must be taken when building it from the averaged function $T^\omega g$. 
Indeed, for each $t\geq0$, $\theta_t\in \cC^\beta(\RR^d;\RR)$ and so the function $T^\omega g$ is then lifted to be a functional on $\cC^\beta$. We show that this lift comes at the cost of an extra degree of assumed regularity on the averaged function $T^\omega g$. 
Furthermore, due to the assumption that $\xi\in \cC^{-\vartheta} $ for $\vartheta>0$ (i.e. $\xi$ is assumed to be truly distributional) we need to make use of the product in Besov space in order to make the product of $\xi T^\omega_{u,v} g(\theta_u) $ well defined.

Similar to the theory of rough paths, our analysis can be divided into two parts: (i) a probabilistic step, and (ii) a  deterministic (analytic) step. We give a short description of the two steps here:  

\begin{itemize}[leftmargin=0.7cm]
    \item[(i)] Let $E$ be a separable Banach space. We  develop an abstract framework of existence and uniqueness of Banach valued equations 
\begin{equation}\label{eq:abs intro}
    \theta_t =p_t+\int_0^t S_{t-s}X_{\dd s}(\theta_s), 
\end{equation}
where $S$ is a suitable (possibly singular) Volterra operator, and $X:[0,T]\times E\rightarrow E$ is a function which is $\frac{1}{2}+$ H\"older regular in time, and suitably regular in its spatial argument (to be specified later), and $p:[0,T]\rightarrow  E$ is a sufficiently regular function.  To this end, we use a simple extension of the Volterra sewing lemma developed in \cite{harang2019tindel} to  construct the non-linear Young-Volterra integral appearing in \eqref{eq:abs intro} as the following 
\begin{equation*}
    \int_0^t S_{t-s}X_{\dd s}(\theta_s):=\lim_{|\cP|\rightarrow 0}\sum_{[u,v]\in \cP[0,t]} S_{t-u}X_{u,v}(\theta_u),
\end{equation*}
where $\cP[0,t]$ is a partition of $[0,t]$ with mesh size $|\cP|$ converging to zero.
\\

\item[(ii)] The second step is then to consider $\{\omega_t\}_{t\in [0,T]}$ to be a stochastic process on a probability space $(\Omega,\cF,\PP)$, and  we need to show that the averaged function $T^\omega g$ is indeed a sufficiently regular function $\PP$-a.s., even when $g$ is a true distribution.
This is done by probabilistic methods. 
\end{itemize}

At last we relate the abstract function $X$ from \eqref{eq: abs equation} to the averaged function $T^\omega$, and then a combination of the previous two steps gives us existence and uniqueness of \eqref{eq:NLY intro}, and thus also \eqref{eq:first theta eq}, which then is used to makes sense of \eqref{eq:full SHE w t noise} through the translation $u=v+\omega$.

\subsection{Short overview of existing literature}
In order to prove existence and uniqueness of \eqref{eq:full SHE}, there is two main directions to follow: the classical probabilistic setting based on It\^o type theory, or the pathwise approach based on rough paths or similar techniques. Using the first approach,   one typically require that $b$ and $g$ are Lipschitz and of linear growth (see e.g. \cite{HUNUALARTSONG2013}). Note that in this context, at least in dimension $d=1$, one can prove also prove existence and uniqueness with less restrictive requirements on $g$ in certain cases (see \cite{muellerNonuniquenessParabolicSPDE2014, neumanPathwiseUniquenessStochastic2018, yangPathwiseUniquenessSPDE2017}). In particular in \cite{muellerNonuniquenessParabolicSPDE2014}, the authors prove, using probabilistic arguments, that there is a non-zero solution to the equation 
\[\partial_t u = \Delta u + |u|^\gamma \xi + \psi,\quad u(0,\cdot) = 0,\]
where $\xi$ is a space time white noise in dimension $1+1$, $\psi$ is a non-zero, non-negative function smooth compactly supported function and $0\leq \gamma<\frac{3}{4}$. They also prove that when $\gamma>\frac34$ uniqueness holds. 
Note also that when $\xi$ is a deterministic function and not a distribution, wellposedness is a well-know topic. In particular if $\xi$ is a non-negative continuous and bounded function, Fujita and Watanabe \cite{fujitaUniquenessNonuniquenessSolutions1968} prove that Osgood condition on $g$ are "nearly" necessary and sufficient to guaranty uniqueness. In particular when $g$ is only H\"older continuous, one can not expect to have uniqueness. One can also consult \cite{benediktNonuniquenessSolutionsInitialvalue2015} and the reference therein for further attempt in that direction.

When using pathwise techniques to solve (stochastic) nonlinear heat equation, as usual (see \cite{Davie2007} for counterexamples in a rough path context), one typically needs to require even higher regularity on $b$ and $g$ in order guarantee existence and uniqueness (see e.g. \cite{Bailleul2019} where three times differentiability is assumed and the Appendix \ref{sec:CL-standard} for a simple proof). 

To the best of our knowledge, little has been done in the direction of investigating the regularizing effects obtained from  measurable perturbations of the heat equation. In the case when $g\equiv0$ it has been proven in \cite{Nualart2004Ouknine} (see also \cite{butkovsky2019}) that the additive stochastic heat equation on the form 
\begin{equation*}
    \partial_t u=\partial^2_xu + b(u)+\partial_t\partial_{x} \omega,\quad (t,x)\in [0,T]\times [0,1]
\end{equation*}
exists uniquely, even when $b$ is only bounded and measurable and $\partial_t\partial_{x} \omega$ is understood as a white noise on  $[0,T]\times [0,1]$. Thus the addition of noise seems to give similar regularizing effects in the stochastic heat equation as is observed in SDEs. 

Note that the recent publication  \cite{athreyaWellposednessStochasticHeat2020} continue in the investigation when $g\equiv 1$, and in particular the authors are able to recover results on skew stochastic heat equation.

\subsection{Main results}

Before presenting our main results, let us first give a definition of what we will call a solution to \eqref{eq:full SHE w t noise}. Let us remind that the definition of admissible weight and Besov spaces is written in Appendix \ref{sub:weighted_besov} and \ref{sub:weighted_lebesgue}, the precise definition of non-linear Young-Volterra equation is given in Section \ref{sec:sewing} and the definition of averaged field is given in Section \ref{averaged fields}. The equations considered in the below results can either be considerd on $\RR$ or $\TT$. Indeed, all the following results are true in both settings. Furthermore, for the sake of using the space white noise, it might be instructive to think to the space as the torus $\TT$. 

\begin{defn}\label{def: concept of solution}
Let $\omega:[0,T]\rightarrow \RR$ be a measurable path, and consider a $g\in \cS'$ such that the averaged field  $T^\omega b\in \cC^\gamma_T \cC^\kappa(w)$ for some $\gamma>\frac{1}{2}$ and  $\kappa\geq 3$ and an admissible weight $w:\RR^d\rightarrow \RR$. 
Suppose that $0 < \vartheta < \beta < 1$ and suppose that $\xi$ is a spatial noise contained in $\cC^{-\vartheta}$. 
Take $\rho = \frac{\beta + \vartheta}{2}$ and suppose that $\gamma-\rho>1-\gamma$. Let $ u_0 \in \cC^\beta$. 
We say that there exists a unique solution to the equation 
\begin{equation}\label{eq:formalequationdef}
    u_t=P_tu_0+\int_0^t P_{t-s}\xi g(u_s)\dd s+\omega_t,\quad t\in [0,T]
\end{equation}
if $u\in \omega + \scC^\varsig_T\cC^\beta$ for any $1-\gamma<\varsig<\gamma-\rho$, and there exists a unique $\theta\in \scC^\varsig_T\cC^\beta$ such that  $u=\omega+\theta$ and $\theta$ solves the non-linear Young equation
\begin{equation}\label{eq:thetaeq}
    \theta_t =P_{t}u_0 +\int_0^t P_{t-s}\xi T^\omega_{\dd s}g(\theta_s).
\end{equation}
Here the integral is understood as a non-linear Young--Volterra integral, as constructed in Section \ref{sec:sewing} and the space $\scC^\varsig_T$ is defined in Definition \ref{def:explosive-norm}. 
\end{defn}

\begin{thm}\label{thm:main existence and uniquness with existing avg op}
Let $\omega:[0,T]\rightarrow \RR$ be a measurable path, and consider a distribution  $g\in \cS'(\RR)$ such that the associated  averaged field  $T^\omega b\in \cC^\gamma_T \cC^\kappa(w)$ for some $\gamma>\frac{1}{2}$,  $\kappa\geq 3$ and a admissible weight function $w:\RR^d \rightarrow \RR_+$. %
Suppose that $0<\vartheta< 1$ and take $\vartheta<\beta < 2- \vartheta$ and that $\xi$ is a spatial noise contained in $\cC^{-\vartheta}$. Let $\rho = \frac{\beta + \vartheta}{2}$ and assume that $1-\gamma < \gamma - \rho$.
Then there exists a time $\tau\in (0,T]$ such that there exists  a unique solution $u$ in the sense of Definition \ref{def: concept of solution} to Equation \eqref{eq:formalequationdef}. 
If $w$ is globally bounded, then the solution is global, in the sense that a unique solution $u$ to \eqref{eq:formalequationdef}  is define on all $[0,\tau]$ for any $\tau\in (0,T]$. 
\end{thm}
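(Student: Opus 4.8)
The plan is to reduce the statement to the well-posedness of the non-linear Young--Volterra equation \eqref{eq:thetaeq} for $\theta = u - \omega$, and to recognise the latter as a concrete instance of the abstract equation \eqref{eq:abs intro} with the choices $E = \cC^\beta$, $p_t = P_t u_0$, the Volterra operator $S_{t-s} = P_{t-s}(\xi\,\cdot\,)$ (heat semigroup composed with multiplication by $\xi$), and the field $X = T^\omega g$ lifted to a functional on $\cC^\beta$ via composition, $X_{u,v}(\theta) = T^\omega_{u,v}g\circ \theta$. Once this identification is made, the bulk of the work is to check that these data satisfy the hypotheses of the abstract existence and uniqueness theory and of the Volterra sewing lemma developed in Section \ref{sec:sewing}; the global statement under boundedness of $w$ is then obtained by an a priori bound / no-blow-up argument.

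First I would analyse the Volterra operator. For $f \in \cC^\beta$ the Besov product $\xi f$ is well defined precisely because $\beta > \vartheta$, with $\|\xi f\|_{\cC^{-\vartheta}}\lesssim \|\xi\|_{\cC^{-\vartheta}}\|f\|_{\cC^\beta}$, and the Schauder estimate for the heat semigroup gives $\|P_{r}h\|_{\cC^\beta}\lesssim r^{-(\beta+\vartheta)/2}\|h\|_{\cC^{-\vartheta}} = r^{-\rho}\|h\|_{\cC^{-\vartheta}}$; hence $S_{t-s}$ is a Volterra operator on $E=\cC^\beta$ of singularity order $\rho$, and $\beta < 2-\vartheta$ guarantees $\rho<1$, so that the singularity is integrable. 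Next, the increments $X_{u,v} = T^\omega_v g - T^\omega_u g$ satisfy $\|X_{u,v}\|_{\cC^\kappa(w)}\lesssim |v-u|^\gamma$ by assumption; since $\kappa\geq 3$ one may Taylor-expand the composition $x\mapsto X_{u,v}(\theta_u(x))$ and control $X_{u,v}(\theta_u)$ and its increments in the $\cC^\beta$-valued argument $\theta$, which is the lift alluded to in the introduction and costs one derivative, leaving $\kappa-1\geq 2$ derivatives for the Young integral and $\kappa-2\geq 1$ for the stability/uniqueness estimate. Feeding these bounds into the Volterra sewing lemma, the germ $(u,v)\mapsto S_{t-u}X_{u,v}(\theta_u)$ has a defect governed by the exponent $\gamma+\varsig$ (combining the time regularity $\gamma$ of $X$ with the $\varsig$-regularity of $\theta$) against singular weights of order $\rho$ (and $\rho+\varsig$) in the terminal variable, so that $t\mapsto\int_0^t S_{t-s}X_{\dd s}(\theta_s)$ lands in $\scC^\varsig_T\cC^\beta$ for any $\varsig<\gamma-\rho$, provided $\gamma+\varsig>1$. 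The two constraints on $\varsig$ are simultaneously satisfiable exactly because $1-\gamma<\gamma-\rho$, which is the standing hypothesis.

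With the hypotheses verified, I would run a fixed-point argument for $\Gamma\theta := p + \int_0^\cdot S_{\cdot-s}X_{\dd s}(\theta_s)$ on a ball of $\scC^\varsig_\tau\cC^\beta$ for $\tau$ small: the sewing estimate yields the self-mapping property (note that $p_t=P_tu_0$ lies in $\scC^\varsig_\tau\cC^\beta$, since $t\mapsto P_tu_0$ is bounded in $\cC^\beta$ and its Hölder seminorm blows up at $t=0$ only in the controlled fashion that the explosive norm of Definition \ref{def:explosive-norm} allows), while the $\kappa\geq 3$ stability estimate for the integral gives the contraction. This produces a unique $\theta\in\scC^\varsig_\tau\cC^\beta$ on some interval $[0,\tau]$, hence a unique solution $u=\omega+\theta$ in the sense of Definition \ref{def: concept of solution}; uniqueness on the whole interval follows by concatenation of local solutions. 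For the global claim, when $w$ is globally bounded the constants in all of the above estimates depend on the solution only through $\|\theta\|_{\scC^\varsig}$ in a way that, by a Gr\"onwall-type / bootstrap argument on the weighted norm, cannot force a finite-time explosion of that norm (the field $T^\omega g$ and the relevant finitely many derivatives are uniformly bounded); thus the local existence time can be kept uniform along the solution and the local pieces patched together to cover any $[0,\tau]$ with $\tau\le T$.

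I expect the main obstacle to be the construction and estimation of the genuinely infinite-dimensional non-linear Young--Volterra integral: controlling the Besov product $\xi\,T^\omega_{u,v}g(\theta_u)$ together with the heat-kernel singularity, keeping track of the derivative lost when lifting $T^\omega g$ to act on $\cC^\beta$-valued paths, and managing the explosive seminorm at $t=0$ while balancing the exponents $\beta,\vartheta,\rho,\gamma,\varsig$. Most of this, however, is encapsulated in the abstract machinery of Section \ref{sec:sewing} and the Volterra sewing lemma of \cite{harang2019tindel}, so that the present proof reduces essentially to checking that the concrete operator $S_{t-s}=P_{t-s}(\xi\,\cdot\,)$ and the concrete field $T^\omega g$ meet the abstract requirements, and then invoking the abstract well-posedness result.
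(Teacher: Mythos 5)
Your proposal is correct and follows essentially the same route as the paper: the paper's proof of this theorem is a direct appeal to its Theorem \ref{thm: existence and uniqunes theta}, which is proved exactly by the reduction you describe — verifying that $S_t=P_t\xi$ is a $\rho$-singular Volterra operator on $\cC^\beta$ via the Besov product and Schauder estimates (Proposition \ref{prop:regualrity of P as S xi}), lifting $T^\omega g$ to a functional on $\cC^\beta$ at the cost of one derivative (Proposition \ref{prop: inf dim reg of avg op}), feeding both into the Volterra sewing construction (Lemma \ref{integral lemma}, Proposition \ref{prop:existence of particular integral}) and the abstract fixed point (Theorem \ref{thm: abs existence and uniqueness}), and patching with a uniform local existence time when $w\simeq 1$ (Theorem \ref{Global Existence}). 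The only cosmetic difference is that the paper's globalization rests on the invariance of the seminorm $[\tilde p]_\varsig=[p]_\varsig$ under the time-additive shift rather than a Gr\"onwall argument, but your conclusion (uniform step size, concatenation) is the same.
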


The unique solution can be interpreted to be a "physical" one, in the sense that it is stable under approximations. We summarize this in the following corollary. 

\begin{cor}\label{cor:main stability of solutions}
Under the assumptions of Theorem \ref{thm:main existence and uniquness with existing avg op}, if $\{g_n\}_{n\in \NN}$ is a sequence of smooth functions converging to $g\in \cS'(\RR)$ such that $T^\omega g_n\rightarrow T^\omega g\in \cC^\gamma_T\cC^\kappa(w)$, then the corresponding sequence of solutions $\{u_n\}_{n\in\NN}=\{\omega + \theta^n\}_{n\in \NN}$ to equation \eqref{eq:thetaeq} converge to $u = w+\theta$ in the sense that $\theta^n \to \theta $ in $ \scC^\varsig_\tau\cC^\beta$ for any $1-\gamma < \varsig < \gamma-\rho$. 
\end{cor}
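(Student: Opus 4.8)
The strategy is to upgrade the contraction estimate underlying Theorem~\ref{thm:main existence and uniquness with existing avg op} into a \emph{local Lipschitz dependence of the solution on the driver}: the map sending the averaged field $T^\omega g$ (with $u_0$ fixed) to the unique solution $\theta$ of \eqref{eq:thetaeq} is Lipschitz with respect to the $\cC^\gamma_T\cC^\kappa(w)$ topology, on bounded sets of drivers. Since the convergence $T^\omega g_n\to T^\omega g$ is exactly a convergence in that topology, the corollary will follow by revisiting the fixed-point argument and tracking dependence on the driver. The first step is to record uniform a priori bounds: because $T^\omega g_n\to T^\omega g$ in $\cC^\gamma_T\cC^\kappa(w)$, we have $M:=\sup_n\norm{T^\omega g_n}_{\cC^\gamma_T\cC^\kappa(w)}<\infty$, and the construction in the proof of Theorem~\ref{thm:main existence and uniquness with existing avg op} then produces a \emph{single} existence time $\tau\in(0,T]$ and a \emph{single} radius $R>0$, depending only on $M$, $\norm{u_0}_{\cC^\beta}$, $\vartheta,\beta,\gamma,\kappa$, such that each $\theta^n$ (and the limit $\theta$) lies in the ball of radius $R$ in $\scC^\varsig_\tau\cC^\beta$, with the explosive norm of Definition~\ref{def:explosive-norm} controlled by $R$ on all of $[0,\tau]$; in particular no $\theta^n$ blows up before $\tau$.

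Next I would subtract the two equations. With $\delta^n:=\theta^n-\theta$, one writes
\[
\delta^n_t=\int_0^t P_{t-s}\xi\,(T^\omega g_n-T^\omega g)_{\dd s}(\theta^n_s)+\left(\int_0^t P_{t-s}\xi\,T^\omega_{\dd s}g(\theta^n_s)-\int_0^t P_{t-s}\xi\,T^\omega_{\dd s}g(\theta_s)\right).
\]
For the first term one invokes the stability bound for the non-linear Young--Volterra integral from Section~\ref{sec:sewing}, which is linear in the driver: since $\theta^n$ stays in the ball of radius $R$, this term is bounded in $\scC^\varsig_\tau\cC^\beta$ by $C(R)\,\norm{T^\omega g_n-T^\omega g}_{\cC^\gamma_T\cC^\kappa(w)}$ (the Besov product with $\xi\in\cC^{-\vartheta}$ being legitimate thanks to $\gamma-\rho>1-\gamma$). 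For the second term one uses the Lipschitz dependence of the integral on its integrand, again from Section~\ref{sec:sewing}, which requires $\kappa\ge 3$ so that $T^\omega g$ lifts to a functional on $\cC^\beta$ with two controlled derivatives; this produces a bound of the form $C(R)\,\tau^{\eta}\,\norm{\delta^n}_{\scC^\varsig_\tau\cC^\beta}$ for some $\eta>0$ (up to the usual explosive/logarithmic corrections), the small power of $\tau$ being precisely the gain that made the solution map a contraction.

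Choosing $\tau$ small enough that $C(R)\tau^{\eta}\le\frac12$ (as in the proof of the theorem), one absorbs the second term into the left-hand side and obtains, for each admissible $\varsig\in(1-\gamma,\gamma-\rho)$,
\[
\norm{\theta^n-\theta}_{\scC^\varsig_\tau\cC^\beta}\le 2\,C(R)\,\norm{T^\omega g_n-T^\omega g}_{\cC^\gamma_T\cC^\kappa(w)}\xrightarrow[n\to\infty]{}0 .
\]
If the contraction time is strictly smaller than the existence time of Theorem~\ref{thm:main existence and uniquness with existing avg op}, one propagates the estimate over finitely many consecutive subintervals of length comparable to the contraction time, feeding at each step the (already controlled) value of $\theta^n-\theta$ at the left endpoint into the next; the uniform radius $R$ keeps the number of steps and all constants independent of $n$. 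The main obstacle is the middle step: one needs a genuinely \emph{joint} stability estimate for the non-linear Young--Volterra integral — simultaneously Lipschitz in the integrand and in the driver $T^\omega g$, and compatible with the Volterra operator $P$ — together with the uniform-in-$n$ lower bound on the existence time, since a priori the $\theta^n$ only live in the explosive space $\scC^\varsig$ and could in principle have distinct blow-up times. Both ingredients are, however, already implicit in the proof of Theorem~\ref{thm:main existence and uniquness with existing avg op}, so the argument amounts to making that dependence explicit.
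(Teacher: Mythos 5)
Your proposal is correct and follows essentially the same route as the paper: uniform a priori bounds from \eqref{eq:bound local solution particular}, a decomposition of $\theta^n-\theta$ into a driver-difference term and an integrand-difference term, the joint stability estimate (which is not merely implicit but is exactly Proposition \ref{prop: stability of theta}), absorption of the Lipschitz term for small time, and iteration over finitely many subintervals. The only step the paper includes that you omit is the verification that for smooth $g_n$ the non-linear Young--Volterra integral built from $T^\omega g_n$ coincides with the classical Riemann integral, which is needed to identify the $\theta^n$ of the corollary with classical solutions but not for the convergence estimate itself.
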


In applications, one is typically interested in the regularizing effects provided by specific sample paths of stochastic processes $\omega$. Although the class of regularizing paths is already well developed, we show here as an example the regularizing effect of measurable sample paths of fractional L\'evy processes (see Section \ref{sec: Averaging operators with Levy noise}). 

In this connection suppose that $\xi$ is a spatial white noise on the torus $\TT$, and we find a conditions on the distribution $g$ and the initial data $u_0$ so that a unique solution to \eqref{eq:formalequationdef} exists.

\begin{thm}\label{thm:main fractional Levy}
Let $\alpha \in (0,2]$. Let $H \in (0,1) \cap \{\alpha^{-1}\}$. Let $L^H$ be a Linear Fractional L\'evy Process with Hurst parameter $H$ built from a symmetric $\alpha$-stable L\'evy process as defined in Section \ref{sec: Averaging operators with Levy noise}. 
Let $\vartheta \in (0,1)$ and let $\kappa > 3 - \frac{1-\vartheta}{2H}$. 
There exists $\eps>0$ small enough such that for all $\xi \in \cC^{-\vartheta}$ and all $g\in \cC^{\kappa}(w)$ where $w$ is an admissible weight, almost surely for all $u_0\in\cC^{\vartheta + \eps}$ there exists a unique local solution to the mSHE in the sense of Definition \ref{def: concept of solution}. If $w$ is bounded, then the solution exists globally. 
\end{thm}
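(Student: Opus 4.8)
The plan is to deduce Theorem~\ref{thm:main fractional Levy} from the abstract wellposedness statement Theorem~\ref{thm:main existence and uniquness with existing avg op}, the point being that the only genuinely new ingredient is the regularity of the averaged field $T^{L^H}g$ for a sample path of the Linear Fractional L\'evy Process, which is a probabilistic fact to be proven in Section~\ref{sec: Averaging operators with Levy noise}. So the argument decomposes into a probabilistic input and a purely deterministic juggling of exponents.

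First I would record the probabilistic input. Since $H\in(0,1)$ and $H\neq\alpha^{-1}$, the moving-average kernel defining $L^H$ is nondegenerate and integrable against the symmetric $\alpha$-stable noise, so $L^H$ is a well-defined, $H$-self-similar process with measurable sample paths; in particular $\omega:=L^H(\varpi)$ is an admissible (measurable) perturbation for every $\varpi$ outside a null set. The claim to be established in Section~\ref{sec: Averaging operators with Levy noise} is that, almost surely, for every $\gamma\in(\tfrac12,1)$ and every $\kappa'<\kappa+\tfrac{1-\gamma}{H}$ one has
\[
    \norm{T^{L^H}_{s,t}g}_{\cC^{\kappa'}(w)}\lesssim|t-s|^{\gamma}\,\norm{g}_{\cC^{\kappa}(w)}\qquad\text{for all }s,t\in[0,T],\ g\in\cC^{\kappa}(w),
\]
i.e. $T^{L^H}g\in\cC^{\gamma}_T\cC^{\kappa'}(w)$; heuristically the averaging operator attached to the $H$-self-similar process $L^H$ gains up to $\tfrac{1}{2H}$ spatial derivatives, with the usual linear trade-off against the requested time-regularity $\gamma$, exactly as in the Catellier--Gubinelli theory of irregular paths. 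The hard part of the whole theorem is concentrated here: proving this gain requires control of the occupation measure / local nondeterminism of $L^H$ inherited from the $\alpha$-stable structure and the fractional kernel; everything else is bookkeeping.

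Next I would fix the parameters. Given $\vartheta\in(0,1)$ and $\kappa>3-\tfrac{1-\vartheta}{2H}$, I would choose $\eps>0$ small and $\gamma\in(\tfrac12,1)$ with
\[
    \tfrac{1+\vartheta}{2}+\tfrac{\eps}{4}<\gamma<1-H(3-\kappa);
\]
such a $\gamma$ exists because $\kappa>3-\tfrac{1-\vartheta}{2H}$ makes $1-H(3-\kappa)>\tfrac{1+\vartheta}{2}$, and both endpoints lie in $(\tfrac12,1)$ once $\eps$ is small (using $0<\vartheta<1$). Put $\beta:=\vartheta+\eps$ and $\rho:=\tfrac{\beta+\vartheta}{2}=\vartheta+\tfrac{\eps}{2}$; shrinking $\eps$ further we get $\vartheta<\beta<\min(1,2-\vartheta)$. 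Then the left inequality above is exactly $1-\gamma<\gamma-\rho$, and the right inequality is exactly $3<\kappa+\tfrac{1-\gamma}{H}$, so the probabilistic claim with $\kappa'=3$ gives $T^{L^H}g\in\cC^{\gamma}_T\cC^{3}(w)$ almost surely, for every $g\in\cC^{\kappa}(w)$. Since $\cC^{\kappa}(w)\hookrightarrow\cS'(\RR)$, all hypotheses of Theorem~\ref{thm:main existence and uniquness with existing avg op} now hold with this $\vartheta,\beta,\gamma$, the weight $w$, spatial noise $\xi\in\cC^{-\vartheta}$, and $\rho=\tfrac{\beta+\vartheta}{2}$.

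Finally I would apply Theorem~\ref{thm:main existence and uniquness with existing avg op}: for every $u_0\in\cC^{\beta}=\cC^{\vartheta+\eps}$ it produces a time $\tau\in(0,T]$ and a unique solution $u$ on $[0,\tau]$ to \eqref{eq:formalequationdef} in the sense of Definition~\ref{def: concept of solution}, and when $w$ is globally bounded it further gives such a solution on $[0,\tau]$ for \emph{every} $\tau\in(0,T]$, i.e. a global solution. Because the exceptional null set produced in the first step does not depend on $u_0$ (nor, by the uniform-in-$g$ bound above, on $g$ or $\xi$), this yields the almost-sure statement of Theorem~\ref{thm:main fractional Levy}, in both the local and the bounded-weight global case.
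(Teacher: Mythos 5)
Your proposal is correct and follows essentially the same route as the paper: the probabilistic input is the content of Theorem \ref{thm:averaging field} (time regularity $1-\frac{\nu}{2}-\eps$ against a spatial gain of $\frac{\nu}{2H}$, i.e.\ $\frac{1-\gamma}{H}$ in your normalization), and your exponent bookkeeping --- $\gamma>\frac{1+\vartheta}{2}$ from $1-\gamma<\gamma-\rho$ with $\rho=\vartheta+\frac{\eps}{2}$, together with $\gamma<1-H(3-\kappa)$ to land in $\cC^{\gamma}_T\cC^{3}(w)$ --- reproduces the paper's derivation of the threshold $\kappa>3-\frac{1-\vartheta}{2H}$ before invoking Theorem \ref{thm:main existence and uniquness with existing avg op}. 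The only caveat is that Theorem \ref{thm:averaging field} yields a random constant $F=F(g)$ for each \emph{fixed} $g$, so the exceptional null set does depend on $g$ (consistently with the quantifier order ``for all $g$ \dots\ almost surely'' in the statement); your parenthetical claim of a $g$-uniform almost-sure operator bound is stronger than what the paper establishes, but it is not needed for the conclusion.
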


One can specify the previous theorem by letting $\xi$ to be the space white noise :

\begin{cor}\label{cor:white noise}
Let $d= 1$ and let $\xi$ be a space white noise on the Torus $\TT$. Let $\alpha \in (0,2]$, let $H\in(0,1)\cap\{\alpha^{-1}\}$ and let $\kappa > 3 - \frac{1}{4H}$. Let $w$ be an admissible weight. There exists $\eps>0$ small enough such that for all $g\in \cC^{\kappa}(w)$, almost surely for all  
$u_0\in\cC^{\frac{1}{2}+\eps}$ and let there exists a unique solution (in the sense of Definition \ref{def: concept of solution}) to the mSHE
\[\dd u  = \Delta u \dd t + g(u) \xi\dd t + \dd L^H_t,\quad u(0,\cdot) = u_0, \]
where $L^H$ is a linear fractional stable motion.

In particular taking $H< \frac14$ allows us to take $\kappa<2$ and go beyond the classical theory using Bony estimates for the product of distribution in Besov spaces. When $H<\frac{1}{8}$ one can deal with non Lipschitz-continuous $g$ and when $H<\frac1{12}$, one can deal with distributional field $g$.
\end{cor}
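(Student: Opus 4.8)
The plan is to obtain Corollary~\ref{cor:white noise} as a direct specialization of Theorem~\ref{thm:main fractional Levy}, the only additional ingredient being the classical regularity of one-dimensional spatial white noise. First I would recall the standard fact that the space white noise $\xi$ on $\TT$ lies almost surely in $\cC^{-\frac{d}{2}-\delta}$ for every $\delta>0$; in dimension $d=1$ this is $\cC^{-\frac12-\delta}$, the Besov exponent $-\frac12$ being sharp. (This follows, e.g., from $L^p$-estimates on the Littlewood--Paley blocks of $\xi$ together with a Besov embedding, or from Kolmogorov's continuity criterion.) Thus, for a free parameter $\delta\in(0,\tfrac12)$, I set $\vartheta=\vartheta(\delta):=\tfrac12+\delta\in(0,1)$, which places $\xi\in\cC^{-\vartheta}$ almost surely and plays the role of the distributional exponent in Definition~\ref{def: concept of solution}.

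Next I would verify the standing hypothesis $\kappa>3-\frac{1-\vartheta}{2H}$ of Theorem~\ref{thm:main fractional Levy}. With the above $\vartheta$,
\[
3-\frac{1-\vartheta}{2H}=3-\frac{\frac12-\delta}{2H}=3-\frac{1}{4H}+\frac{\delta}{2H},
\]
which decreases to $3-\frac{1}{4H}$ as $\delta\downarrow 0$. Hence, starting from the assumption $\kappa>3-\frac{1}{4H}$, I can fix $\delta$ small enough that still $\kappa>3-\frac{1-\vartheta}{2H}$. Then Theorem~\ref{thm:main fractional Levy}, applied with this $\vartheta$ and with $\omega=L^H$ (a measurable path), produces an $\eps_0>0$ such that for every admissible weight $w$ and every $g\in\cC^{\kappa}(w)$, almost surely for all $u_0\in\cC^{\vartheta+\eps_0}=\cC^{\frac12+\delta+\eps_0}$ there is a unique (local) solution of \eqref{eq:formalequationdef} with this $\omega$ in the sense of Definition~\ref{def: concept of solution}, global whenever $w$ is bounded. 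Setting $\eps:=\delta+\eps_0$ (again small) gives exactly the stated conclusion, since the equation there is precisely \eqref{eq:formalequationdef} with $b\equiv 0$ and $\omega=L^H$ the linear fractional stable motion.

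Finally I would dispatch the closing remarks as arithmetic on the threshold $3-\frac{1}{4H}$: it is $<2$ iff $H<\frac14$ (so $\kappa$ may be taken below the range covered by Bony's paraproduct estimates for products in Besov spaces), $<1$ iff $H<\frac18$ (so $g$ need not be Lipschitz-continuous), and $<0$ iff $H<\frac1{12}$ (so $g$ may be a genuine distribution in $\cC^{\kappa}(w)$).

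There is essentially no obstacle at this level: the analytic substance lives in Theorem~\ref{thm:main fractional Levy} and in the averaged-field estimates for linear fractional stable motions that feed it. The only point requiring minor care is that $3-\frac{1}{4H}$ is merely the infimum of the admissible thresholds as $\vartheta\downarrow\frac12$, so the \emph{strict} inequality $\kappa>3-\frac{1}{4H}$ is exactly what leaves room to select an admissible $\vartheta<1$ --- and correspondingly an admissible initial regularity $\frac12+\eps$; one should also keep track of the arbitrarily small loss by which white noise misses $\cC^{-1/2}$, which is already absorbed into the choice of $\vartheta$ above.
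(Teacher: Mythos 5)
Your proposal is correct and follows exactly the paper's route: the paper's own proof is the one-line observation that space white noise on $\TT$ lies a.s.\ in $\cC^{-\vartheta}$ for every $\vartheta>\tfrac{d}{2}=\tfrac12$, followed by an application of Theorem \ref{thm:main fractional Levy}. You simply make explicit the choice $\vartheta=\tfrac12+\delta$ and the check that the strict inequality $\kappa>3-\tfrac{1}{4H}$ leaves room for a small $\delta$, together with the threshold arithmetic, all of which is consistent with what the paper leaves implicit.
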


The proofs of the above theorems and corollary can be found in Sections \ref{sec: existence and uniqueness of mSHE} and \ref{sec: Averaging operators with Levy noise}. 

\subsection{Outline of the paper}
The paper is structured as follows: In section \ref{sec:sewing} we extend the concept of non--linear Young integration to the infinite dimensional setting, including a Volterra operator, which in later sections will play the role of the inverse Laplacian. We also give a result on existence and uniqueness of abstract equations in Banach spaces. 

In section \ref{averaged fields} we give a short overview on the concept of averaged fields and their properties. As this topic is by now well studied in the literature, we only give here the necessary details and provide several references for further information. We also show how the standard averaged field can be viewed as an operator on certain Besov function spaces.

In section \ref{sec: existence and uniqueness of mSHE} we formulate the multiplicative stochastic heat equation in the non-linear Young-Volterra integration framework, using the concept of averaged fields. We prove existence and uniqueness of these equations, as well as Theorem \ref{thm:main existence and uniquness with existing avg op} and Corollary \ref{cor:main stability of solutions}. In section \ref{sec: Averaging operators with Levy noise} we investigate closer the regularity of averaged fields associated with sample paths of fractional Levy processes, and prove Theorem \ref{thm:main fractional Levy}. 

At last, in Section \ref{sec:conclussion} we give a short reflection on the main results of the article and provide some thoughts on future extensions of our results.

For thee sake of self-containedness, 
we have included an appendix with preliminaries on weighted Besov spaces, and some results regarding the regularity/singularity of the inverse Laplacian acting on Besov distributions.   In addition we give a short proof for existence and uniqueness of \eqref{eq:full SHE w t noise} in the case of twice differentiable $g$.

\subsection{Notation}
For $\beta \in \mathbb{R}$ and $d\geq 1$, we define the H\"older Besov space
\[\cC^{\beta} = B^\beta_{\infty,\infty}(\RR^d;\RR)\]
endowed with its usual norm built upon Paley-Littlewood blocks and denoted by $\|\cdot\|_{\cC^\beta}$ (see Appendix \ref{sub:weighted_besov} and especially Proposition \ref{prop:equivalent_norms} for more on weighted Besov spaces).  
Note that when working with a weight $w$ and weighted spaces, we denote the spaces $\cC^{\beta}(w)$ and the norm $\|\cdot\|_{\cC^{\beta}(w)}$.
For $T>0$ and $\varsig\in(0,1)$ and $E$ a Banach space with norm $\|\cdot\|_E$ and for $f:[0,T]\mapsto E$,
 \[ [ f]_{\varsig;E} = \sup_{s\neq t} \frac{\|f_t - f_s\|_E}{|t-s|^\varsig} < \infty,\]
 and for $\varsig>0$ we define 
 \[\|f\|_{\varsig;E} := \sum_{k=0}^{[\varsig ]} \|f^{([k])}_0\|_E + \big[ f^{([\varsig ])} \big]_{\varsig -[\varsig ];E},\]
 and finally
\[ \cC^{\varsig }_{T}E = \cC^{\varsig }\big([0,T]; E\big) := \left\{ f : [0,T] \to E\, :\, \|f\|_{\varsig;E} < \infty\right\}.\]
Whenever the underlying space $E$ is clear form the context we will use the short hand notation $[f]_\varsig$ etc. to denote the the H\"older semi norm. 
We will frequently use the increment notation $f_{s,t}:=f_t-f_s$. We write $f \lesssim g$ if there exists a constant $C>0$ such that $f\leq C g$. Furthermore to stress that the constant $C$ depends on a parameter $p$ we write $ l \lesssim_p g$. We write $f\simeq g$ is $f\lesssim g$ and $g \lesssim f$.
For $s\leq t$, we denote by  $\cP([s,t])$ a partition of the interval $[s,t]$.
We write $|\cP| = \max_{k\in\{0,\cdots,n-1\}}|t_{k+1}-t_k|$.

\section{Non-linear Young-Volterra theory in Banach spaces}

For $\alpha\in (0,2]$, let $t\mapsto P_t^{\frac{\alpha}{2}}$ be the fundamental solution associated to the $\alpha$-fractional heat equation
\[\partial_t P^{\frac{\alpha}{2}} = -(-\Delta)^{\frac{\alpha}{2}}P^{\frac\alpha2},\quad P^{\frac\alpha2}_0 = \delta_0.\] 
For a function $y:\RR^d \rightarrow \RR$ let $P^{\frac{\alpha}{2}}_t y$ denotes the convolution between the fundamental solution at time $t\geq 0$ and $y$. Note that for $\alpha=2$, we obtain the classical heat equation, and $P_\cdot:=P^1_\cdot $ then denotes the convolution operator with the Gaussian kernel.  Towards a pathwise analysis of stochastic parabolic equations, one encounter the problem that for a function $y\in \cC^{\kappa}(\RR^d)$ with  $\kappa\geq 0$, the mapping  $t\mapsto P_t^{\frac{\alpha}{2}} y$ is smooth in time everywhere except when approaching  $0$ where it is only continuous. In particular, from standard (fractional) heat kernel estimates (see Corollary \ref{cor: heat kernel estimates} in the Appendix)  we know that for $s\leq t \in [0,T]$, any $\theta\in [0,1]$ and $\rho\in (0,\alpha]$ the following inequality holds
\begin{equation}\label{eq: cont sing ineq}
    \|(P_t^{\frac{\alpha}{2}}-P^{\frac{\alpha}{2}}_s)y\|_{\cC^{\varsig+\alpha\rho}} \lesssim \|y\|_{\cC^\kappa}|t-s|^\theta s^{-\theta-\rho}.
\end{equation}
Note the special case when $\rho=0$. Then $S_t$ is a linear operator on $\cC^\kappa$ which is $\theta$-H\"older continuous on an interval $[\eps,T]\subset [0,T]$ for any $\eps>0$ and $\theta\in [0,1]$, but might only continuous when approaching the point  $t=0$.  We therefore need to extend the concept of H\"older spaces in order to take into account this type of loss of regularity near the origin.

\begin{defn}\label{def:explosive-norm}
 Let $E$ be a Banach space. We define the space of continuous paths on $(0,T]$ which are H\"older of order $\varsig\in (0,1)$ on $(0,T]$ and only continuous at zero  in the following way
\begin{equation*}
    \scC^{\varsig}_TE :=\{y:[0,T]\rightarrow E \, | \,  [y]_{\varsig}<\infty \}
\end{equation*}
where we define the semi-norm
\begin{equation*}
    [y]_{\varsig}:=\sup_{s\leq t\in [0,T];\,\zeta\in [0,\varsig]} \frac{|y_{s,t}|_E}{|t-s|^\zeta s^{-\zeta}}.
\end{equation*}
The space $\scC^{\varsig}_TE $ is a Banach space when equipped with the norm $\| y\|_{\varsig}:= |y_0|_{E}+[y]_{\varsig}$.
\end{defn}

Singular H\"older type spaces introduced above has recently  been extensively studied in \cite{bellingeri2020singular}. The space introduced above can be seen as a special case of the more general spaces considered there. 
We use the convention of taking supremum over $\theta\in [0,\varsig]$, as this will make computations in subsequent sections simpler. It is well known that if a function $y\in \cC^\gamma_TE$ for some $\gamma\in [0,1)$, then $y\in \cC^\theta_T E$ for all $\theta\in [0,\gamma]$ (note that $\theta=0$ implies that $y$ is bounded, which is always true for H\"older continuous functions on bounded domains). Thus it follows that  also
\begin{equation*}
    \sup_{s\leq t\in [0,T];\, \theta\in [0,\gamma]} \frac{|y_{s,t}|_E}{|t-s|^\theta} <\infty.
\end{equation*}

\begin{rem}\label{Holder implies sinngular H\"older}
It is readily seen that $\scC^{\varsig}_TE$ consists of all functions which are H\"older continuous on $(0,T]$, but is only continuous in the point $\{0\}$. Note therefore that the following  inclusion $\cC^\varsig_TE\subset \scC^{\varsig}_TE$ holds. Indeed,  for any $s,t\in [0,T]$
\begin{equation*}
  \frac{|y_{s,t}|_E}{|t-s|^\varsig}=  \frac{s^{-\varsig}|y_{s,t}|_E}{s^{-\varsig}|t-s|^\varsig}  
\end{equation*}
and thus in particular, 
\begin{equation*}
     \frac{|y_{s,t}|_E}{s^{-\varsig}|t-s|^\varsig} \leq T^{\varsig}  \frac{|y_{s,t}|_E}{|t-s|^\varsig}.
\end{equation*}
\end{rem}
\begin{rem}
It may be instructive for the reader to keep in mind that in subsequent sections, we will take the Banach space $E$ to be the Besov-H\"older space $\cC^\beta(\RR^d)$  or $\cC^\beta(\TT^d)$ for some $\beta\in \RR$ and $d\geq 1$.
\end{rem}

We will throughout this section work with general Volterra type operators satisfying certain regularity assumptions. We therefore give the following working hypothesis. 

\begin{hyp}\label{kernel hypothesis}
For each $t\in [0,T]$, let $S_t\in \cL(E)$ be a linear operator on $E$ satisfying the following three regularity conditions for some $\rho> 0$ and any $\theta,\theta'\in [0,1]$ and any $0\leq s \leq t \leq \tau' \leq \tau\leq T$
\begin{equation*}
    \begin{aligned}
       {\rm (i)}&\quad   &|S_tu|_E&\lesssim t^{-\rho} |u|_E
         \\
      {\rm (ii)}&\quad  &|(S_t-S_s)u|_E &\lesssim (t-s)^\theta s^{-\theta-\rho}|u|_E
         \\
        {\rm (iii)}&\quad &\Big|\big((S_{\tau-t}-S_{\tau-s})-(S_{\tau'-t}-S_{\tau'-s}) \big )u \Big|_E&\lesssim (\tau-\tau')^{\theta'}(t-s)^\theta(\tau'-t)^{-\theta-\theta'-\rho} |u|_E
    \end{aligned}
\end{equation*}
We then say that $t\mapsto S_t$ is a $\rho$--singular operator. 

\end{hyp}

\subsection{Non-linear Young-Volterra integration}\label{sec:sewing}

We are now ready to construct a non-linear Young-Volterra integral in Banach spaces. If $X : [0,T]\times E \to E$ is a smooth function in time, and $(P_t)_{t\in[0,T]}$ is a (nice) linear operator of $E$ and if $(y_t)_{t\in[0,T]}$ is a continuous path from $[0,T]$ to $E$ itself, it is quite standard to consider integrals of the following form :
\[\int_0^t P_{t-r} \dot{X}_r(y_r) \dd r.\]
The aim this part is to extend the notion of integral for non-smooth drivers $X$, and to solve integral equations using this extension of the integral; The notion of $\rho$--singular operator will be useful in the following. 

\begin{lem}\label{integral lemma}
Consider parameters $\gamma>\frac{1}{2}$, $0\leq \rho \leq \gamma $ and $0<\varsig<\gamma - \rho$ and assume $\varsig+\gamma>1$. Let $E$ be a Banach space and let $y\in \scC^{\varsig}_T E$. Suppose that $X:[0,T]\times E \rightarrow E $  satisfies for any $x,y\in E$ and $s\leq t\in [0,T]$
\begin{equation*}
    \begin{aligned}
    {\rm (i)}\qquad &|X_{s,t}(x)|_{E}\lesssim H(|x|_E) |t-s|^\gamma
    \\
    {\rm (ii)}\qquad &|X_{s,t}(x)-X_{s,t}(y)|_{E}\lesssim H(|x|_E\vee |y|_E)|x-y|_{E}|t-s|^\gamma
    \end{aligned}
\end{equation*}
where $H$ is a positive locally bounded function on $\RR_+$. 
Let  $(S_t)_{t\in[0,T]}\in \cL(E)$ be a $\rho$--singular linear operator on $E$, satisfying Hypothesis \ref{kernel hypothesis}.
We then define the non-linear Young-Volterra integral by 
\begin{equation}\label{eq:theta map}
   \Theta (y)_{t}:=\lim_{\substack{\cP\in\cP([0,t])\\|\cP|\rightarrow 0}} \sum_{[u,v]\in \cP} S_{t-u} X_{u,v}(y_u).
\end{equation}
 The integration map $\Theta$ is a continuous non-linear operator from $ \scC^{\varsig}_T E\rightarrow \scC^{\varsig}_T E$, and there exists an $\eps>0$  such that the following inequality holds 
\begin{equation}\label{eq:bound on theta int }
    |\Theta(y)_t-\Theta(y)_s|_{E} \lesssim \sup_{0\leq z\leq \|y\|_\infty} H(z) (1+[y]_\varsig) (t-s)^{\varsig}T^{\gamma-\rho + \varsig}, 
\end{equation}
  Furthermore, for a linear operator $A\in \cL(E)$, the following commutative property holds
\begin{equation*}
    A\Theta(y)_t=\lim_{|\cP|\rightarrow} \sum_{[u,v]\in \cP} A\, S_{t-u} X_{u,v}(y_u). 
\end{equation*}
\end{lem}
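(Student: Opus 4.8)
The strategy is a direct application of the Volterra sewing lemma of \cite{harang2019tindel}, so the main work is to verify its hypotheses for the germ $\Xi_{u,v} := S_{t-u}X_{u,v}(y_u)$ (with $t$ fixed), and then to upgrade the one-point bound to a bound uniform in $t$ with the correct behaviour near $s=0$, so that $\Theta(y)$ lands in $\scC^\varsig_T E$. First I would fix $t\in(0,T]$ and define, for $s\le u\le v\le t$, the two-index object $\Xi_{u,v}=S_{t-u}X_{u,v}(y_u)$ together with its ``discrete derivative'' $\delta\Xi_{u,m,v} = \Xi_{u,v}-\Xi_{u,m}-\Xi_{m,v}$. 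Writing this out,
\[
\delta\Xi_{u,m,v} = (S_{t-u}-S_{t-m})X_{m,v}(y_m) + S_{t-u}\big(X_{m,v}(y_u)-X_{m,v}(y_m)\big) - S_{t-u}\big(X_{u,m}(y_u)-X_{m,v}(y_u)\big)?
\]
— more precisely, using $X_{u,v}=X_{u,m}+X_{m,v}$ one gets $\Xi_{u,v}-\Xi_{u,m} = S_{t-u}X_{m,v}(y_u)$, so $\delta\Xi_{u,m,v} = S_{t-u}X_{m,v}(y_u) - S_{t-m}X_{m,v}(y_m) = (S_{t-u}-S_{t-m})X_{m,v}(y_m) + S_{t-u}(X_{m,v}(y_u)-X_{m,v}(y_m))$. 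Now I estimate the two pieces: for the first, Hypothesis \ref{kernel hypothesis}(ii) gives a factor $(m-u)^{\theta}(t-m)^{-\theta-\rho}$ while (i) of the present lemma gives $H(|y_m|)(v-m)^\gamma$; for the second, (i) of Hypothesis \ref{kernel hypothesis} gives $(t-u)^{-\rho}$, (ii) of the present lemma gives $H(\cdot)|y_u-y_m|_E(v-m)^\gamma$, and $|y_{u,m}|_E\lesssim [y]_\varsig (m-u)^\varsig u^{-\varsig}$ by definition of the $\scC^\varsig_T$ seminorm. Choosing the free exponent $\theta$ appropriately (e.g. $\theta=\varsig$) both terms are bounded by $C\,\sup_{z\le\|y\|_\infty}H(z)(1+[y]_\varsig)\,(v-u)^{\gamma+\varsig}\,(\text{negative power of distances to }t\text{ and }0)$, and since $\gamma+\varsig>1$ this is a valid ``$(1+\varepsilon)$-Hölder germ'' for the Volterra sewing lemma, whose singular weights in $t-m$ and in $m$ are exactly those that the Volterra sewing lemma of \cite{harang2019tindel} is designed to absorb (this is precisely why one needs the singular, semigroup-free version rather than the classical sewing lemma).

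Having verified the germ hypotheses, the Volterra sewing lemma yields existence of the limit in \eqref{eq:theta map}, independence of the limit from the partition sequence, and the a priori bound
\[
\Big| \Theta(y)_t - S_{t-s}\!\!\sum_{[u,v]\in\cP([s,t])}\!\!\! \text{(germ terms)} \Big|_E \lesssim \sup_{z\le\|y\|_\infty}H(z)(1+[y]_\varsig)\,(t-s)^{\gamma+\varsig}\,s^{-?}\,,
\]
from which, by adding and subtracting $\Theta(y)_s$ and using Hypothesis \ref{kernel hypothesis}(i)--(iii) to compare $S_{t-u}$ and $S_{s-u}$ across the increment $\Theta(y)_t-\Theta(y)_s$, one extracts the estimate \eqref{eq:bound on theta int }. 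The key point in this step is bookkeeping the time-regularity: the difference $\Theta(y)_t-\Theta(y)_s$ splits into a ``new mass'' part over $[s,t]$ (controlled directly by the sewing bound, giving $(t-s)^\gamma$, hence $(t-s)^\varsig$ after losing $\gamma-\varsig\ge0$ powers) and an ``old mass'' part $\int_0^s$ where the operator changes from $S_{t-\cdot}$ to $S_{s-\cdot}$; here Hypothesis \ref{kernel hypothesis}(iii) provides the extra Hölder factor $(t-s)^{\theta'}$ together with the singular weight $s^{-\theta-\theta'-\rho}$, and one checks the exponents close under the standing assumptions $0\le\rho\le\gamma$, $0<\varsig<\gamma-\rho$, $\varsig+\gamma>1$ so that the total power of $s^{-1}$ is at most $\varsig$, placing $\Theta(y)$ in $\scC^\varsig_T E$. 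Continuity of $\Theta:\scC^\varsig_T E\to\scC^\varsig_T E$ follows by running the same computation on the difference germ $\Xi^{y}_{u,v}-\Xi^{\tilde y}_{u,v}$ and invoking the local Lipschitz hypothesis (ii) together with the stability statement of the sewing lemma, controlling $|X_{u,v}(y_u)-X_{u,v}(\tilde y_u)|$ and the increments of $y-\tilde y$.

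Finally, the commutation with a bounded linear $A\in\cL(E)$ is immediate once the integral exists: $A$ is continuous, so it passes through the limit in \eqref{eq:theta map}, i.e. $A\Theta(y)_t = A\lim_{|\cP|\to0}\sum S_{t-u}X_{u,v}(y_u) = \lim_{|\cP|\to0}\sum A S_{t-u}X_{u,v}(y_u)$, and this last limit is exactly the asserted Volterra integral with germ $A S_{t-u}X_{u,v}(y_u)$ (which one could alternatively re-derive from scratch noting that $t\mapsto AS_t$ is again $\rho$-singular with constants multiplied by $\|A\|_{\cL(E)}$). I expect the genuine obstacle to be purely computational rather than conceptual: tracking the interplay of the \emph{four} exponents ($\gamma$ from $X$, $\varsig$ from $y$, $\rho$ from $S$, and the free Hölder exponents $\theta,\theta'$) so that every singular weight in $s^{-1}$ and every weight in the distance to the fixed endpoint $t$ is summable/integrable, and in particular verifying that the optimal choice of $\theta$ (and $\theta'$ in part (iii)) makes the $s^{-1}$-power land at exactly $\varsig$ and no larger — this is where the hypothesis $\varsig<\gamma-\rho$ is used sharply.
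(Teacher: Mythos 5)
Your proposal is correct and follows essentially the same route as the paper: you verify the Volterra sewing hypotheses for the germ $S_{t-u}X_{u,v}(y_u)$ via the same decomposition $\delta\Xi_{u,m,v}=(S_{t-u}-S_{t-m})X_{m,v}(y_m)+S_{t-u}\big(X_{m,v}(y_u)-X_{m,v}(y_m)\big)$, split the increment $\Theta(y)_{s,t}$ into the new-mass part $\Theta_s^t(y)_t$ and the old-mass part $\Theta_0^s(y)_{s,t}$ handled via the rectangular bound (iii) of Hypothesis \ref{kernel hypothesis}, and make the same exponent choices ($\theta=\theta'=\varsig$, relying on $\varsig+\gamma>1$ and $\varsig<\gamma-\rho$). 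The only blemish is the garbled first display for $\delta\Xi$ (with the stray third term), which you immediately correct, and the commutation with $A$ is handled just as in the paper by linearity and passage to the limit.
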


\begin{proof}
Let us first assume that for any $0\leq s \leq t \leq \tau' \leq \tau \leq T$ the following operator is well-defined :
\begin{equation*}
   \Theta_s^t (y)_{\tau}:=\lim_{\substack{ \cP \in \cP([s,t]) \\ |\cP|\to 0}} \sum_{[u,v]\in \cP} S_{\tau-u} X_{u,v}(y_u). 
\end{equation*}
Note that in this setting we have 
\[\Theta(y)_t = \Theta_0^t (y)_t \]
and the increment satisfies
\[\Theta(y)_{s,t} = \Theta_s^t(y)_t + \Theta_0^s(y)_{s,t}.\]
Hence, in order to have the bound \eqref{eq:bound on theta int }, it is enough to have a bound on $\Theta_s^t (y)_{\tau}$ and on $\Theta_s^t (y)_{\tau',\tau}$.

To this end, we will begin to show the existence of the integrals $\Theta_s^t (y)_\tau$ and $\Theta_s^t(y)_{\tau',\tau}$ together with the suitable bounds. Both these terms are constructed in the same way, however since 
\begin{equation}\label{eq:theta map bis}\Theta_s^t(y)_{\tau',\tau} = \lim_{\substack{\cP\in\cP([s,t])\\|\cP|\to 0}} \sum_{[u,v] \in \cP} \left(S_{\tau - u} - S_{\tau' - u}\right) X_{u,v}(y_u),\end{equation}
this term is a bit more involved as it has an increment of the kernel $P$ in the summand. We will therefore show existence as well as a suitable bound for this term and leave the specifics of the first term as a simple exercise for the reader.
Everything will be proven in a similar manner as the sewing lemma from the theory of rough paths. More specifically, the recently developed Volterra sewing lemma from \cite{harang2019tindel} provides the correct techniques to this specific setting.
The uniqueness, and additivity (i.e. that $ \Theta(y)_{s,t} = \Theta_s^t (y)_t + \Theta_0^s(y)_{s,t}$)  of the mapping follows directly from the standard arguments given for example in \cite{harang2019tindel} or \cite[Lem. 4.2]{Friz2014}.

Consider now a dyadic partition $\cP^n$ of $[s,t]$ defined iteratively such that $\cP^0=\{[s,t]\}$, and for $n\geq 0$
\begin{equation*}
    \cP^{n+1} :=\bigcup_{[u,v]\in \cP^n} \{[u,m],[m,v]\},
\end{equation*} 
where $m:=\frac{u+v}{2}$. It follows that $\cP^n$ consists of $2^n$ sub-intervals $[u,v]$, each of length $2^{-n}|t-s|$. Define the approximating sum 
\begin{equation}\label{eq: I_n}
    \cI_{n}:=\sum_{[u,v]\in \cP^n} (S_{\tau-u}-S_{\tau'-u}) X_{u,v}(y_u),
\end{equation}
and observe that for $n\in \NN$, we have 
\begin{equation}\label{eq:diff In}
    \cI_{n+1}-\cI_{n}=-\sum_{[u,v]\in \cP^n} \delta_{m}\left[(S_{\tau-u}-S_{\tau'-u}) X_{u,v}(y_u)\right],
\end{equation}
where $m=\frac{u+v}{2}$ and for a two variable function $f$, we use that   $\delta_m f_{u,v}:=f_{u,v}-f_{u,m}-f_{m,v}$.
By elementary algebraic manipulations we see that 
\begin{multline}\label{delta on inc}
  \delta_{m}\left[(S_{\tau-u}-S_{t-u}) X_{u,v}(y_u)\right]
    \\
    = (S_{\tau-u}-S_{\tau'-u}) (X_{m,v}(y_u)-X_{m,v}(y_m))+(S_{\tau-u}-S_{\tau'-u}-S_{\tau-m}+S_{\tau'-m}) X_{m,v}(y_u). 
\end{multline}
We first investigate the second term on the right hand side above. 
Invoking {\rm (iii)} of Hypothesis \ref{kernel hypothesis} and invoking assumption {\rm (i)} on $X$, we observe that for any $\theta,\theta'\in [0,1]$
\begin{multline*}
    |(S_{\tau-u}-S_{t-u}-S_{\tau-m}+S_{t-m}) X_{m,v}(y_u)|_{E}
    \\
    \lesssim H(|y_u|_E) |\tau-\tau'|^{\theta'}|\tau'-m|^{-\theta'-\rho-\theta}|m-u|^\theta|v-m|^\gamma.
\end{multline*}
Let us now fixe $\theta' = \varsig \in [0,\gamma-\rho)$, and choose $\theta\in [0,1]$ such that $\gamma+\theta>1$ and $\theta +\varsig+\rho<1$. Note that this is always possible due to the fact that $\gamma-\rho-\varsig>0$. Furthermore, we note that for any partition $\cP$ of $[s,t]$ we have \begin{equation}\label{eq: sum to int ineq}
    \sum_{[u,v]\in \cP} |\tau'-m|^{-\theta-\rho-\varsig}|v-m|\lesssim \int_s^t |\tau'-r|^{-\theta-\rho-\varsig}\dd r \lesssim |t-s|^{1-\varsig-\rho-\theta}, 
\end{equation}
where we have used that $m=(u+v)/2$.
From this, it follows that  for any $\theta\in [0,\varsig]$ the following inequality holds 
\begin{multline}\label{eq: sum over rec inc}
     \sum_{[u,v]\in \cP^n}   |(S_{\tau-u}-S_{t-u}-S_{\tau-m}+S_{t-m}) X_{m,v}(y_u)|_{E} 
     \\
     \lesssim \sup_{ 0\leq z\leq \|y\|_\infty} H(z) |\cP^n|^{\gamma+\theta-1}|\tau-\tau'|^\varsig |t-s|^{1-\theta-\rho-\varsig}.
\end{multline}

Let us now move on to the first term in  \eqref{delta on inc}. By invoking  the bounds on $P$ from {\rm (ii)} of Hypothesis \ref{kernel hypothesis} and assumption {\rm (ii)} on $X$, we observe that for any $\theta \ge 0$ and any $0\leq \zeta \leq \varsig$,  
\begin{multline*}
    |(S_{\tau-u}-S_{\tau'-u})(X_{m,v}(y_u)-X_{m,v}(y_m))|_{E}
    \\
    \lesssim |\tau-\tau'|^\theta|\tau'-u|^{-\rho-\theta} |m-v|^\gamma |m-u|^\zeta u^{-\zeta} H(|y_u|_E\vee|y_m|_E) [y]_{\varsig}.
\end{multline*}
Similarly as shown in \eqref{eq: sum over rec inc}, we now take $\theta = \zeta = \varsig$ and we consider a sum over a partition $\cP$ over $[s,t]\subset [0,T]$, and see that since $\varsig + \gamma >1$, and for $\rho+\varsig<1$,
\begin{equation*}
    \sum_{[u,v]\in \cP} |\tau-\tau'|^\varsig |\tau'-m|^{-\rho-\varsig}|m-v|^\gamma|m-u|^\varsig u^{-\varsig} 
    \leq |\cP|^{\varsig+\gamma-1}|\tau-\tau'|^\varsig \int_s^t |\tau'-r|^{-\rho-\varsig} r^{-\varsig}\dd r
\end{equation*}
where again $m=(u+v)/2$.
Furthermore, when $s<t<\tau'$, we have
\begin{align*}
    \int_s^t |\tau'-r|^{-\rho-\varsig} r^{-\varsig} \dd r
    \leq & \int_s^t |\tau'-r|^{-\rho-\varsig} r^{-\varsig} \dd r \\
    \leq & (t-s)^{1-\rho-2\varsig} \int_0^1 (1-r)^{-(\rho+\varsig)}r^{-\varsig} \dd r \\
    \lesssim & (t-s)^{1-\rho-2\varsig}.
\end{align*}
We therefore obtain when specifying $\theta = \varsig$,
\begin{multline}\label{eq:sum over inc in y}
   \sum_{[u,v]\in \cP^n} |(S_{\tau-u}-S_{\tau'-u}) (X_{m,v}(y_u)-X_{m,v}(y_m))|_{E}
    \\
    \lesssim |\cP^n|^{\varsig+\gamma-1} |\tau-\tau'|^\varsig|t-s|^{1-\rho-2\varsig}  \sup_{0\leq z\|y\|_\infty} H(z) [y]_{\varsig}.
\end{multline}
Combining \eqref{eq: sum over rec inc} and \eqref{eq:sum over inc in y},  and  using that for $|\cP^n| = 2^{-n} |t-s|$, it follows from \eqref{eq:diff In} that %
\begin{equation*}
    | \cI_{n+1}(s,t)-\cI_{n}(s,t)|_{E}
    \lesssim \sup_{0\leq z \leq \|y\|_{\infty,[s,t]}} H(z) (1+[y]_{\varsig}) 2^{-n(\gamma-(\rho+\varsig))}  (\tau-\tau')^\varsig (t-s)^{\gamma-\rho-\varsig},
\end{equation*}
For $m>n\in \NN$ thanks to  the triangle inequality, and  the estimate above, we get
\begin{equation}\label{I m n diff}
    \| \cI_{m}(s,t)-\cI_{n}(s,t)\|_{E}
    \lesssim \sup_{0\leq z\|y\|_{\infty,[s,t]}} H(z) (1+[y]_{\varsig})  (\tau-\tau')^\varsig (t-s)^{\gamma-\rho-\varsig} \psi_{n,m}, 
\end{equation}
where $\psi_{n,m}=\sum_{i=n}^m 2^{-i(\gamma-(\rho + \varsig))}$, and it follows that $\{\cI_n\}_{n\in \NN}$ is Cauchy in $E$. It follows that there exists a limit $\cI=\lim_{n\rightarrow \infty} \cI_n$ $E$. Moreover, from  \eqref{I m n diff} we find that the following inequality holds
\begin{equation}\label{qe:bound ineq 1}
    |\cI-(S_{\tau-s}-S_{\tau'-s}) X_{s,t}(y_s)|_{E} 
    \lesssim \sup_{0\leq z\leq  \|y\|_{\infty}} H(z)(1+[y]_{\varsig})  (\tau-\tau')^\varsig (t-s)^{\gamma-\rho-\varsig} \psi_{0,\infty}.
\end{equation}
the proof that $\cI$ is equal to $\Theta_{s}^{t}(y)_{\tau,\tau'}$ defined in \eqref{eq:theta map bis} (where in particular $\Theta(y)$ is defined independent of the partition $\cP$ of $[s,t]$) follows by standard arguments for the sewing lemma, see \cite{harang2019tindel} in the Volterra case. Finally, we get for any $0 \leq s\leq t \leq \tau' \leq \tau \leq T$,
\begin{equation}\label{eq:bound theta 1 remi}|\Theta_s^t(y)_{\tau',\tau} - (S_{\tau - s} - S_{\tau'-s})X_{s,t}(y_s) |_E \lesssim \sup_{0 \leq z \leq \|y\|_\infty} H(z) T^{\gamma -\rho + \varsig} |t-s|^\varsig.
\end{equation}

Using the same techniques, in order to prove that $\Theta_{s}^{t}(y)_t$ exists for all $0\leq s \leq t \leq \tau \leq T$ one has to control
\[S_{\tau-m}(X_{m,v}(y_m) - X_{m,v}(y_u) ) + (S_{\tau-u} - S_{\tau-m})X_{m,v}(y_u). \]
Performing exactly the same computation, we have
\begin{equation}\label{eq:bound theta 2 remi}|\Theta_{s}^{t}(y)_\tau- S_{\tau-s}X_{s,t}(y_s) |_E \lesssim \sup_{0\leq z \leq \|y\|_{\infty}} H(z)T^{\gamma-\varsig+\rho}|t-s|^{\varsig} (1+[h]_\varsig).
\end{equation}
Combining \eqref{eq:bound theta 1 remi} and \eqref{eq:bound theta 2 remi}, 
and using the fact that 
\[\Theta(y)_t - \Theta(y)_s = \Theta_{s,t}(y)_t + \Theta_0^s(y)_{s,t}\]
\begin{multline*}
     |\Theta(y)_{t,s}-S_{t-s}X_{s,t}(y_s) - (S_{t}-S_{s})X_{0,s}(y_0)|_{E}\\ \leq 
     |\Theta_s^t (y)_t-(S_{t} - S_s)X_{s,t}(y_s)|_{E}
     +|\Theta_0^s(y)_{s,t}-S_{t-s}\xi X_{0,s}(y_0)|_{E}
\end{multline*}
it is readily checked that the following inequality holds
\begin{equation*}
    |\Theta(y)_t-\Theta(y)_s -S_{t-s}X_{s,t}(y_s) - (S_{t}-S_{s})X_{0,s}(y_0)|_{E} \lesssim \sup_{0\leq \leq \|y\|_\infty}H(z) (1+[y]_{\varsig}) T^{\gamma-\rho + \varsig} (t-s)^{\varsig}.
\end{equation*}
Finally note that since $\varsig < \gamma - \rho$,
\[|S_{t-s}X_{s,t}(y_s)|_E \lesssim H(|y_s|) |t-s|^{\gamma-\rho} \lesssim H(|y_s|) T^{\gamma-\rho+\varsig}|t-s|^\varsig,\]
and
\[|(S_{t}-S_{s})X_{0,s}(y_0)|_{E} \lesssim H(|y_0|) s^{\gamma-\rho - \varsig}|t-s|^\varsig.\]
For the last claim, if $A\in \cL(E)$ is a linear operator, then one re-define $\cI_n$ in \eqref{eq: I_n} to be given as 
\begin{equation*}
    \cI_n(A):= \sum_{[u,v]\in \cP^n} A(S_{\tau-u}-S_{t-u})X_{u,v}(y_u), 
\end{equation*}
and by linearity of $A$ we see that $\cI_n(A)=A\cI_n$. 
Taking the limits, using the above established inequalities, we find that 
$\lim_{n\rightarrow \infty}\| \cI_n(A)-A\cI_n\|_{E} =0$. 
\end{proof}

With the construction of the non-linear Young Volterra integral, we will in later applications need certain stability estimates.

\begin{prop}[Stability of $\Theta$]\label{prop: stability of theta}  Let $\gamma,\varsig,\rho$ be given as in of Lemma \ref{integral lemma}. Assume that for $i=1,2$, $X^i:[0,T]\times E\rightarrow E$  satisfies for any $x,y\in E$ and $s\leq t\in [0,T]$
\begin{equation}\label{eq: conditions for stability}
    \begin{aligned}
    &{\rm (i)}\qquad &|X^i_{s,t}(x)|_{E}+\|\nabla X^i_{s,t}(x)|_{\cL(E)}&\lesssim H(|x|_E) |t-s|^\gamma
    \\
    &{\rm (ii)}\qquad &|X^i_{s,t}(x)-X^i_{s,t}(y)|_{E}&\lesssim H(|x|_E\vee |y|_E)|x-y|_{E}|t-s|^\gamma
    \\
    &{\rm (iii)}\qquad &|\nabla X^i_{s,t}(x)-\nabla X^i_{s,t}(y)|_{\cL(E)}&\lesssim H(|x|_E\vee |y|_E)|x-y|_{E}|t-s|^\gamma,
    \end{aligned}
\end{equation}
where $H$ is a positive locally bounded function, and $\nabla$ is understood as a linear operator on $E$ in the Fr\'echet sense. 
Furthermore, suppose there exists a positive and locally bounded function $H_{X^1-X^2}$ such that 
\begin{equation}\label{eq: diff bounds in X}
        \begin{aligned}
    &{\rm (i)}\qquad &|X^{1}_{s,t}(x) - X^{2}_{s,t}(x)|_{E}&\lesssim H_{X^1-X^2}(|x|_E) |t-s|^\gamma
    \\
    &{\rm (ii)}\qquad &|(X^{1}_{s,t} - X^{2}_{s,t})(x) -(X^{1}_{s,t}-X^{2}_{s,t})(y)|_{E}&\lesssim H_{X^1-X^2}(|x|_E\vee |y|_E)|x-y|_{E}|t-s|^\gamma
    \end{aligned}
\end{equation}

Let $\Theta^1$ denote the non-linear integral operator constructed in Lemma \ref{integral lemma}  with respect to $X^1$, and similarly let $\Theta^2$ denote the integral operator with respect to $X^2$. 
Then for two paths $y,\tilde{y}\in \scC^{\varsig}_T\cC^{\beta+2\rho}$, 
\begin{multline}\label{eq:stability bound}
    [\Theta^1(y^1)-\Theta^2(y^2)]_{\varsig} \lesssim_P \bigg[\sup_{0\leq z\leq \|y^1\|\vee \|y^2\|} H(z)\left([y^1]_{\varsig}+[y^2]_{\varsig}\right) (|y^1_0-y^2_0|_{E}+[y^1-y^2]_{\varsig}) 
    \\
    +\sup_{0\leq z \leq \|y^1\|_\infty\vee \|y^2\|_\infty }H_{X^1-X^2}(z) [y^1]_{\varsig}\vee[y^2]_{\varsig}\bigg]  T^{\gamma-\rho+\varsig}.
\end{multline}

\end{prop}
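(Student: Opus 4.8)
The plan is to mimic the proof of Lemma \ref{integral lemma}, running the dyadic-partition/Volterra-sewing argument for the \emph{difference} $\Theta^1(y^1)_t - \Theta^2(y^2)_t$ rather than for a single integral. As in Lemma \ref{integral lemma}, it suffices to estimate the two auxiliary quantities $\big(\Theta^1_s{}^t(y^1) - \Theta^2_s{}^t(y^2)\big)_\tau$ and its further increment in the $\tau$ variable, since $\Theta^i(y)_{s,t} = \Theta^i_s{}^t(y)_t + \Theta^i_0{}^s(y)_{s,t}$ and the two pieces are treated identically. So I would set
\[\cI_n := \sum_{[u,v]\in\cP^n}\Big[(S_{\tau-u}-S_{\tau'-u})X^1_{u,v}(y^1_u) - (S_{\tau-u}-S_{\tau'-u})X^2_{u,v}(y^2_u)\Big],\]
and control $\cI_{n+1}-\cI_n$ via $\delta_m$ applied to the summand, exactly as in \eqref{delta on inc}.

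The key point is the algebraic decomposition of the term to be bounded. Write $D_{u,v} := X^1_{u,v}(y^1_u) - X^2_{u,v}(y^2_u)$ and decompose it (for the $\delta_m$-term) into three groups: (a) a ``difference of the $X$'s at a common point'' contribution, controlled by \eqref{eq: diff bounds in X} with the factor $H_{X^1-X^2}$; (b) a ``difference of the paths'' contribution of the form $X^2_{m,v}(y^1_u) - X^2_{m,v}(y^1_m) - X^2_{m,v}(y^2_u) + X^2_{m,v}(y^2_m)$, which one handles by a first-order Taylor expansion using the Fréchet derivative $\nabla X^2$, so that condition \eqref{eq: conditions for stability}(iii) produces the needed $|y^1-y^2|_E$ factor together with $[y^1-y^2]_\varsig$ and the Hölder moduli; and (c) the genuinely ``second-order'' contributions involving $\delta_m$ of the kernel increment, handled by Hypothesis \ref{kernel hypothesis}(iii) combined with \eqref{eq: conditions for stability}(i). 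In each case the sum over the dyadic partition is estimated with the same Riemann-sum-to-integral bounds as in \eqref{eq: sum to int ineq}–\eqref{eq:sum over inc in y} (taking $\theta=\theta'=\zeta=\varsig$ and choosing the auxiliary exponent $\theta$ in $[0,\varsig]$ with $\gamma+\theta>1$ and $\theta+\varsig+\rho<1$), which yields a geometric factor $2^{-n(\gamma-\rho-\varsig)}$ and hence a convergent series.

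From this one obtains $\cI_n\to\cI$ in $E$ together with the bound
\[|\cI - (S_{\tau-s}-S_{\tau'-s})D_{s,t}|_E \lesssim_P \Big[\textstyle\sup H\,([y^1]_\varsig+[y^2]_\varsig)(|y^1_0-y^2_0|_E+[y^1-y^2]_\varsig) + \sup H_{X^1-X^2}\,([y^1]_\varsig\vee[y^2]_\varsig)\Big]\,(\tau-\tau')^\varsig(t-s)^{\gamma-\rho-\varsig},\]
and similarly for the $\Theta^i_s{}^t(y^i)_\tau$ piece. Identifying $\cI$ with $\big(\Theta^1_s{}^t(y^1)-\Theta^2_s{}^t(y^2)\big)_{\tau',\tau}$ by the usual sewing uniqueness argument (as in \cite{harang2019tindel}), adding the contributions from the decomposition $\Theta(y)_{s,t}=\Theta_s{}^t(y)_t+\Theta_0{}^s(y)_{s,t}$, and finally absorbing the remainder terms $(S_{t-s}-\cdots)D_{s,t}$ and $(S_t-S_s)D_{0,s}$ using $\varsig<\gamma-\rho$ (just as at the end of the proof of Lemma \ref{integral lemma}), gives the claimed estimate \eqref{eq:stability bound} after dividing by $(t-s)^\varsig s^{-\varsig}$ and taking the supremum defining $[\cdot]_\varsig$, with the $T^{\gamma-\rho+\varsig}$ factor collecting all the powers of the time horizon.

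The main obstacle is bookkeeping in step (b): one must correctly split $X^1_{m,v}(y^1_u)-X^1_{m,v}(y^1_m) - \big(X^2_{m,v}(y^2_u)-X^2_{m,v}(y^2_m)\big)$ so that each resulting piece carries \emph{either} a factor $|y^1-y^2|$ or $[y^1-y^2]_\varsig$ (from differencing the arguments) \emph{or} a factor $H_{X^1-X^2}$ (from differencing the two fields), and never two smallness factors at once in a way that would over- or under-count. The natural device is to add and subtract $X^1_{m,v}(y^2_u)-X^1_{m,v}(y^2_m)$: the first difference is then $\int_0^1 \big(\nabla X^1_{m,v}(\lambda y^1_u+(1-\lambda)y^2_u) - \nabla X^1_{m,v}(\lambda y^1_m+(1-\lambda)y^2_m)\big)\,\dd\lambda$ applied to appropriate increments, controlled by \eqref{eq: conditions for stability}(iii); the second is a path increment of $X^1_{m,v}-X^2_{m,v}$ evaluated along $y^2$, controlled by \eqref{eq: diff bounds in X}(ii). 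Everything else is a routine repetition of the estimates already carried out in Lemma \ref{integral lemma}.
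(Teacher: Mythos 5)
Your proposal is correct and follows essentially the same route as the paper: the same add-and-subtract separation of the field difference $X^1-X^2$ from the path difference $y^1-y^2$, the same Fr\'echet-derivative (Taylor) representation exploiting condition (iii) of \eqref{eq: conditions for stability}, and the same dyadic sewing estimates recycled from Lemma \ref{integral lemma}. The only organizational difference is that the paper splits at the level of the operators, writing $\Theta^1(y^1)-\Theta^2(y^2)=(\Theta^1(y^1)-\Theta^2(y^1))+(\Theta^2(y^1)-\Theta^2(y^2))$ and disposing of the first piece by applying Lemma \ref{integral lemma} directly to $Z=X^1-X^2$ via \eqref{eq: diff bounds in X}, whereas you perform the same split inside the summand of a single sewing argument --- equally valid, just slightly more bookkeeping.
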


\begin{proof}
Define the following two functions
\begin{equation*}\label{eq: diff defined}
    \Theta_{s,t}^1(y^1)-\Theta_{s,t}^2(y^2)=(\Theta_{s,t}^1(y^1)-\Theta_{s,t}^2(y^1))+(\Theta_{s,t}^2(y^1)-\Theta_{s,t}^2(y^2))=:D_{X^1,X^2}(s,t)+D_{y^1,y^2}(s,t). 
\end{equation*}
We treat $D_{X^1,X^2}$ and $D_{y^1,y^2}$ separately, and begin to consider $D_{y^1,y^2}$.  
Since  $X^i$ is differentiable and satisfies {\rm (i)-(iii)} in \eqref{eq: conditions for stability},  for $i=1,2$ we have
\begin{equation*}
    X^i_{s,t}(y^1_s)-X^i_{s,t}(y^2_s)=\cX^i_{s,t}(y^1_s,y^2_s)(y^1_s-y^2_s),
\end{equation*}
where  $\cX^i(y^1_s,y^2_s):=\int_0^1 \nabla X^i_{s,t} (qy^1_s+(1-q)y^2_s)\dd q$.
In order to prove \eqref{eq:stability bound}, we proceed with the exact same strategy as outlined in the proof of Lemma \ref{integral lemma}. That is, we use the same proof as the proof of Lemma \ref{integral lemma} to first prove appropriate bounds for $D_{y^1,y^2}(s,t)$ and similarly for $D_{X^1,X^2}(s,t)$ afterwards. To this end,  changing the integrand in \eqref{eq: I_n} so that  
\begin{equation*}
    \cI_{n}(s,t):=\sum_{[u,v]\in \cP^n[s,t]} (S_{\tau-u}-S_{t-u}) \cX_{u,v}^i(y^1_u,y^2_u)(y^1_u-y^2_u),  
\end{equation*}
we continue along the lines of the proof in Lemma \ref{integral lemma} to show that $\cI^n$ is Cauchy. As the strategy of this proof is identical to that of Lemma \ref{integral lemma} we will here only point out the important differences. 
 By appealing to the condition {\rm (iii)}   in \eqref{eq: conditions for stability},we observe in particular that 
\begin{equation*}
    |\cX^i(y^1_s,y^2_s)-\cX^i(y^1_u,y^2_u)|_{\cL(E)}\lesssim |t-s|^\gamma  \sup_{0\leq z \leq \|y^1\|_{\infty}\vee\|y^2\|_\infty}H(z)\left([y^1]_{\varsig}+[y^2]_{\varsig}\right) .
\end{equation*}
Furthermore, it is readily checked that 
\begin{equation*}
    |y^1_s-y^2_s|_E\lesssim |y^1_0-y^2_0|_E+[y-\tilde{y}]_\varsig T^\varsig.
\end{equation*}
Following along the lines of the proof of Lemma \ref{integral lemma}, one can then check that for $m>n\in \NN$ 
\begin{multline}\label{nr2: I m n diff}
    | \cI_{m}(s,t)-\cI_{n}(s,t)|_{E}
    \\
    \lesssim \sup_{0\leq z \leq \|y^1\|_{\infty}\vee \|y^2\|_\infty } H(z)\left([y^1]_\varsig+[y^2]_\varsig\right) (|y^1_0-y^2_0|_{E}+[y^1-y^2]_{\varsig}T^\varsig)  (\tau-\tau')^\varsig (t-s)^{\gamma-\rho-\varsig} \psi_{n,m},
\end{multline}
where $\psi_{n,m}$ is defined as below \eqref{I m n diff}. 
With this inequality at hand, the remainder of the proof can be verified in a similar way as in the proof of Lemma \ref{integral lemma}, and we obtain from this lemma that 
\begin{equation*}
    \|D_{y^1,y^2}\|_{\scC^\varsig_T E} \lesssim C\sup_{0\leq z \leq \|y^1\|_{\infty}\vee\|y^2\|_\infty }(z)\left([y^1]_\varsig+[y^2]_\varsig\right) (\|y^1_0-y^2_0\|_{E}+[y^1-y^2]_{\varsig}T^\varsig). 
\end{equation*}

Next we move on to prove a similar bound of $D_{X^1,X^2}$ as defined in \eqref{eq: diff defined}. Set $Z=X^1-X^2$. By \eqref{eq: diff bounds in X} it follows that $Z$ satisfies  the conditions of Lemma \ref{integral lemma}, and then from \eqref{eq:bound on theta int } it follows that 
\begin{equation*}
    [D_{X_1,X^2}]_\varsig \lesssim \sup_{0\leq z \leq \|y^1\|_\infty \vee \|y^2\|_\infty} H_{X^1-X^2}(z) [y^1]_\varsig\vee [y^2]_\varsig T^{\gamma - \varsig - \rho}. 
\end{equation*}

\end{proof}

\subsection{Existence and uniqueness}

We begin to prove local existence and uniqueness for an abstract type of equation with  values in a Banach space. The equation in itself does not require the use of the non-linear Young-Volterra integral, and is formulated for general operators $\Theta:[0,T]\times\scC^{\varsig}_T E \rightarrow \scC^{\varsig}_T E$ satisfying certain regularity conditions. We will apply these results in later sections in combination with the non-linear Young-Volterra integral operator $\Theta$ created in the previous section, and thus the reader is welcome to already think of $\Theta$ as being a non-linear Young integral operator as constructed in Lemma \ref{integral lemma}.  

\begin{thm}[Local existence and uniqueness]\label{thm: abs existence and uniqueness} Let $\Theta:[0,T]\times\scC^{\varsig}_T E \rightarrow \scC^{\varsig}_T E$ be a function  which satisfies for $y,\tilde{y}\in \scC^{\varsig}_T E$ and some $\epsilon>0$
\begin{equation}\label{eq:abs cond for ex uni}
\begin{aligned}
     \left[\Theta(y)\right]_\varsig & \leq  C(\|y\|_\infty)(1+[y]_{\varsig})T^\eps
    \\
      [\Theta(y)-\Theta(\tilde{y})]_{\varsig} & \leq C(\|y\|_{\infty}\vee\|\tilde{y}\|_\infty)([y]_\varsig+[\tilde{y}]_\varsig) (|y_0-\tilde{y}_0|_{E}+[y-\tilde{y}]_{\varsig}) T^\eps, 
    \end{aligned}
\end{equation}
where $C$ is a positive and increasing locally bounded function. Consider $p\in \scC^\varsig_T E$ and  let $\tau>0$ be such that 
\begin{equation}\label{eq:tau cond}
\tau \leq \left[4(1+[p]_\varsig) C(1+|p_0|+[p]_\varsig)\right]^{-\frac{1}{\eps}} .    
\end{equation}
Then there exists a unique solution to the equation 
\begin{equation}\label{eq: abs equation}
    y_t=p_t +\Theta(y)_{t}, \qquad p\in \scC^\varsig_T E
\end{equation}
in $\cB_{\tau}(p)$, where $\cB_{\tau}(p)$ is a unit ball in $\scC^{\varsig}_\tau E$, centered at $p$. 
\end{thm}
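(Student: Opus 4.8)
The plan is to prove this via a standard Banach fixed point argument on the ball $\cB_\tau(p)$ equipped with the metric induced by the $\scC^\varsig_\tau E$ norm. First I would define the map $\cM : \cB_\tau(p) \to \scC^\varsig_\tau E$ by $\cM(y)_t := p_t + \Theta(y)_t$, and verify that $\cM$ leaves $\cB_\tau(p)$ invariant. To this end, observe that $\cM(y) - p = \Theta(y)$, so by the first estimate in \eqref{eq:abs cond for ex uni}, $[\cM(y)-p]_\varsig = [\Theta(y)]_\varsig \leq C(\|y\|_\infty)(1+[y]_\varsig)T^\eps$ (working on $[0,\tau]$ the factor is $\tau^\eps$). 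For $y \in \cB_\tau(p)$ one has $\|y\|_\infty \leq |p_0| + |y_0 - p_0|_E + [y]_\varsig \tau^\varsig + \dots$ and $[y]_\varsig \leq [p]_\varsig + 1$, since the ball has radius one; more precisely $|y_0 - p_0|_E + [y - p]_\varsig \leq 1$. Plugging these into the bound and using that $C$ is increasing and locally bounded, one gets $[\cM(y) - p]_\varsig \leq C(1 + |p_0| + [p]_\varsig)(2 + [p]_\varsig)\tau^\eps$ (up to harmless adjustments of the numerical constants), and $|\cM(y)_0 - p_0|_E = |\Theta(y)_0|_E = 0$. The choice of $\tau$ in \eqref{eq:tau cond} is precisely calibrated so that this quantity is $\leq \tfrac14 \cdot 4 = 1$, hence $\cM(y) \in \cB_\tau(p)$. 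One should double-check the numerical constant $4$ here matches; this is the routine bookkeeping.

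Next I would show $\cM$ is a contraction on $\cB_\tau(p)$. For $y, \tilde y \in \cB_\tau(p)$, we have $\cM(y) - \cM(\tilde y) = \Theta(y) - \Theta(\tilde y)$ and $(\cM(y) - \cM(\tilde y))_0 = 0$, so the distance is controlled purely by the H\"older seminorm; the second estimate in \eqref{eq:abs cond for ex uni} gives
\[
[\Theta(y) - \Theta(\tilde y)]_\varsig \leq C(\|y\|_\infty \vee \|\tilde y\|_\infty)([y]_\varsig + [\tilde y]_\varsig)(|y_0 - \tilde y_0|_E + [y - \tilde y]_\varsig)\tau^\eps.
\]
Again bounding $\|y\|_\infty \vee \|\tilde y\|_\infty$ and $[y]_\varsig + [\tilde y]_\varsig$ by constants depending only on $p$ (using that both lie in the unit ball around $p$), one obtains $[\Theta(y) - \Theta(\tilde y)]_\varsig \leq C(1+|p_0|+[p]_\varsig) \cdot 2(1+[p]_\varsig) \tau^\eps \cdot (|y_0 - \tilde y_0|_E + [y-\tilde y]_\varsig)$. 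By the constraint on $\tau$ from \eqref{eq:tau cond}, the prefactor is at most $\tfrac12$, so $\cM$ is a strict contraction with constant $\tfrac12$ in the $\scC^\varsig_\tau E$ metric restricted to the ball.

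Finally, since $\cB_\tau(p)$ is a closed subset of the Banach space $\scC^\varsig_\tau E$ (it is a closed ball), it is a complete metric space, and the Banach fixed point theorem yields a unique $y \in \cB_\tau(p)$ with $y = p + \Theta(y)$, which is exactly \eqref{eq: abs equation}. I expect the only genuinely delicate point — and hence the part deserving care rather than being waved away — is the precise propagation of the constants: ensuring that the local boundedness of $C$ combined with the radius-one constraint produces a clean bound of the form $C(1 + |p_0| + [p]_\varsig)(1 + [p]_\varsig)$, and that the numerical factor $4$ in the definition of $\tau$ simultaneously handles both the invariance ($\leq 1$) and the contraction ($\leq \tfrac12$) requirements. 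One must be slightly careful that $\|y\|_\infty$ for $y$ in the ball is bounded in terms of $\|p\|_\infty \leq |p_0| + [p]_\varsig T^\varsig$ plus the radius, and that all time factors $\tau^\eps \leq T^\eps$ are absorbed correctly; none of this is hard, but it is where a sloppy argument would break.
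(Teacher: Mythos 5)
Your proposal is correct and follows essentially the same route as the paper: the authors also define the solution map $\Gamma_\tau(y)=p+\Theta(y)$ on the unit ball $\cB_\tau(p)$ (whose elements satisfy $y_0=p_0$ and $[y-p]_\varsig\leq 1$), verify invariance from the first bound in \eqref{eq:abs cond for ex uni} and contraction with constant $\tfrac12$ from the second, and conclude by the Banach/Picard--Lindel\"of fixed point theorem, with the factor $4$ in \eqref{eq:tau cond} chosen exactly to absorb the bound $[y]_\varsig\vee[\tilde y]_\varsig\leq 1+[p]_\varsig$ and $\|y\|_\infty\leq 1+|p_0|+[p]_\varsig$. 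The constant bookkeeping you flag as the delicate point is carried out in the paper precisely as you sketch it.
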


\begin{proof}
To prove existence and uniqueness, we will apply a standard fixed point argument. Define the solution map $\Gamma_\tau:\scC^{\varsig}_\tau E \rightarrow \scC^{\varsig}_\tau E$ given by 
$$
\Gamma_\tau(y):=\{p_t+\Theta(y)_t|\, t\in [0,\tau]\}.
$$
Since $\Theta:\scC^{\varsig}_\tau E\rightarrow \scC^{\varsig}_\tau E $ and $p\in \scC^{\varsig}_\tau E$ it follows that $\Gamma_\tau(y)\in  \scC^{\varsig}_\tau E$. We will now prove that the solution map $\Gamma_\tau$ is an invariant map and a contraction on a unit ball $\cB_\tau(p)\subset \scC^\varsig_\tau E$ centered at $p\in \scC^{\varsig}_\tau E$. In particular, we define 
\begin{equation*}
    \cB_\tau(p):=\{y\in \scC^\varsig_\tau |\, y_t=p_t+z_t,\,{\rm with }\,\,z\in \scC^\varsig_\tau E,\,\, z_0=0,\,\,[y-p]_\varsig\leq 1\}. 
\end{equation*}

We begin with the invariance. From the first condition in \eqref{eq:abs cond for ex uni}, it is readily checked that for $y\in \cB_\tau(P)$  
\begin{equation}\label{eq:boundedness of gamma}
    [\Gamma_\tau(y)-p]_{\varsig} \leq C(1+|p_0|+[p]_\varsig)(1+[y]_{\varsig})\tau^\eps,  
\end{equation}
where we have used that for  $y\in \cB_\tau(P)$ ,  $\|y\|_\infty \leq 1+ |p_0|+[p]_\varsig$
Choosing a parameter ${\tau_1}>0$ such that  
$$
\tau_1\leq (2C(1+|p_0|+[p]_\varsig)^{-\frac{1}{\eps}}
$$
it follows that $\Gamma_{\tau_1}(\cB_{\tau_1}(p))\subset \cB_{\tau_1}(p)$and we say that
$\Gamma_{\tau_1}$ leaves the ball $\cB_{\tau_1}(p)$ invariant.

Next, we prove that $\Gamma_\tau$ is a contraction on $\cB_{\tau}(p)$. From the second condition in \eqref{eq:abs cond for ex uni}, it follows that for two elements $y,\tilde{y}\in\cB_\tau(p)$ we have 
\begin{equation*}
    [\Gamma(y)-\Gamma(\tilde{y})]_{\varsig} \leq  2(1+[p]_\varsig) C(1+|p_0|+[p]_\varsig) [y-\tilde{y}]_{\varsig}\tau^\eps,
\end{equation*}
where we have used that $y_0=\tilde{y}_0$, and 
$$
[y]_\varsig\vee [\tilde{y}]_\varsig \leq 1+[p]_\varsig\quad  {\rm and}\quad \|y\|_\infty\vee \|\tilde{y}\|_\infty \leq 1+|p_0|+[p]_\varsig. 
$$
Again, choosing a parameter $\tau_2>0$ such that  
$$
\tau_2\leq \left(4(1+[p]_\varsig)C(1+|p_0|+[p]_\varsig)\right)^{-\frac{1}{\eps}}. 
$$
it follows that 
$$
 [\Gamma(y)-\Gamma(\tilde{y})]_{\varsig;\tau_2} \leq \frac{1}{2} [y-\tilde{y}]_{\varsig;\tau_2}. 
 $$
Since  $\tau_2\leq \tau_1$, we  conclude that 
the solution map $\Gamma_{\tau_2}$ both is an invariant map and a contraction on the unit ball $\cB_{\tau_2}(p)$. It follows by Picard-Lindl\"ofs fixed point theorem that a unique solution to \eqref{eq: abs equation} exists in $\cB_{\tau_2}(p)$. 

\end{proof}

The next theorem shows that if the locally bounded function $C$ appearing in the conditions on $\Theta$ in \eqref{eq:abs cond for ex uni} of Theorem \ref{thm: abs existence and uniqueness}, is uniformly bounded, then there exists a unique global solution to   \eqref{eq: abs equation}. 

\begin{thm}[Global existence and uniqueness]\label{Global Existence}
Let $\Theta:[0,T]\times\scC^{\varsig}_T E \rightarrow \scC^{\varsig}_T E$ satisfy \eqref{eq:abs cond for ex uni} for a positive,  globally bounded function $C$, i.e. there exists a constant $M>0$ such that $\sup_{x\in\RR_+} C(x)\leq M$. Furthermore, suppose $\Theta$ is time-additive, in the sense that $\Theta_t=\Theta_s+\Theta_{s,t}$ for any $s\leq t\in [0,T]$.  Then for any $p\in \scC^\varsig_T E$ there exists a unique solution $y\in \scC^\varsig_T E$ to the equation 
\begin{equation*}
    y_t=p_t +\Theta(y)_{t},\quad t\in [0,T].  
\end{equation*}
\end{thm}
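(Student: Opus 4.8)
The plan is to build the global solution by iterating the local result Theorem~\ref{thm: abs existence and uniqueness}, the point being that, because $C$ is bounded by $M$, the length of the interval gained at each step stays bounded below, so that finitely many steps exhaust $[0,T]$. I will use time-additivity in the following form: for $0\le a\le b$ the operator $\Theta$ localises to an operator $\Theta_a^{\,\cdot}$ depending only on the restriction of the path to $[a,b]$, satisfying the bounds \eqref{eq:abs cond for ex uni} on $[a,b]$ with the same $C$, whose ``memory'' $t\mapsto\Theta_a^b(y)_{b,t}$ on $[b,T]$ is $\varsig$-H\"older with the bound in the first line of \eqref{eq:abs cond for ex uni}, and with $\Theta(y)_t=\Theta_0^a(y)_t+\Theta_a^t(y)_t$ for $t\ge a$. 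Granting this, if a solution $y$ of \eqref{eq: abs equation} is known on $[0,\tau_k]$ then for $t\in[\tau_k,T]$ the equation rewrites as $y_t=q^{(k)}_t+\Theta_{\tau_k}^t(y)_t$ with $q^{(k)}_t:=p_t+\Theta(y)_{\tau_k}+\Theta_0^{\tau_k}(y)_{\tau_k,t}$, again of the form \eqref{eq: abs equation}; note that $q^{(k)}_{\tau_k}=y_{\tau_k}$, and that for $k\ge1$ the singular weight $s^{-\zeta}$ is bounded on $[\tau_k,T]$, so $\scC^\varsig$ there is just the ordinary H\"older space.

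First I would run the iteration. Applying Theorem~\ref{thm: abs existence and uniqueness} to the reformulated equation on $[\tau_k,T]$, with $C\le M$, produces a solution on $[\tau_k,\tau_{k+1}]$ with $\tau_{k+1}-\tau_k\ge[4(1+[q^{(k)}]_\varsig)M]^{-1/\eps}$; time-additivity, together with the identity $\Theta(y)_{\tau_k}=\Theta_0^{\tau_k}(y)_{\tau_k}$ used to match the two pieces at $\tau_k$, shows that the concatenation solves \eqref{eq: abs equation} on $[0,\tau_{k+1}]$ and lies in $\scC^\varsig E$. Starting from $\tau_0=0$ (so $q^{(0)}=p$) this produces a nested family of solutions, and it remains to see that $\sup_k\tau_k=T$ with the supremum attained.

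The heart of the matter — and the step I expect to be the main obstacle — is an a priori bound on $[q^{(k)}]_\varsig$ that is uniform in $k$, which is exactly where global (not merely local) boundedness of $C$ is needed. For this I would fix the mesh $h:=(4M)^{-1/\eps}$, depending only on $M$ and $\eps$, and estimate $[y]_{\varsig;[0,t]}$ for any solution on $[0,t]\subset[0,T]$ by chaining over $0=s_0<\dots<s_N=t$ with $s_{j+1}-s_j=h$, so that $N\le N_*:=\lceil T/h\rceil$ is a fixed number. On the $j$-th block, time-additivity splits the increments of $y$ into a $p$-contribution, the memory $\Theta_0^{s_j}(y)_{s_j,\cdot}$, and the local term $\Theta_{s_j}^{\,\cdot}(y)$; the first line of \eqref{eq:abs cond for ex uni} with $C\le M$ bounds the memory by $M(1+[y]_{\varsig;[0,s_j]})T^\eps$ and the local term by $\tfrac14(1+[y]_{\varsig;[s_j,s_{j+1}]})$, and the latter is absorbed on the left. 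Summing over blocks (and controlling the cross-increments and the weight near $0$, which only enters the first block) gives a recursion $R_{j+1}\le c_1R_j+c_2$ for $R_j:=[y]_{\varsig;[0,s_j]}$ with $c_1,c_2$ depending only on $M$, $T$ and $[p]_\varsig$; since $R_0=0$ and $j\le N_*$ this yields $[y]_{\varsig;[0,t]}\le\Phi$ for a finite $\Phi=\Phi(\|p\|_{\scC^\varsig_T E},M,T)$ independent of $t$, hence $[q^{(k)}]_\varsig\le[p]_\varsig+M(1+\Phi)T^\eps$ and $\tau_{k+1}-\tau_k\ge\delta_*>0$ with $\delta_*$ independent of $k$. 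Therefore $[0,T]$ is exhausted in at most $\lceil T/\delta_*\rceil$ steps, and the resulting $y$ is a global solution in $\scC^\varsig_T E$.

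Finally, uniqueness would be proved by the same local-to-global scheme. If $y,\tilde y\in\scC^\varsig_T E$ both solve \eqref{eq: abs equation} then $y_0=p_0=\tilde y_0$, and the second line of \eqref{eq:abs cond for ex uni} with $C\le M$ gives $[y-\tilde y]_{\varsig;[0,a]}\le 2MR\,a^\eps[y-\tilde y]_{\varsig;[0,a]}$ with $R:=\|y\|_{\scC^\varsig_T E}+\|\tilde y\|_{\scC^\varsig_T E}<\infty$; choosing $a$ with $4MR\,a^\eps\le1$ forces $y\equiv\tilde y$ on $[0,a]$, and restarting at $a$ — where time-additivity and the already-established coincidence on $[0,a]$ make the memory terms cancel — propagates the equality over a further interval of length at least $(4MR)^{-1/\eps}$, so $y\equiv\tilde y$ on all of $[0,T]$ after finitely many steps.
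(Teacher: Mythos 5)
Your proposal follows the same overall strategy as the paper's proof: invoke the local result (Theorem \ref{thm: abs existence and uniqueness}) on successive intervals, use time-additivity to recast the equation on $[\tau_k,T]$ as another instance of \eqref{eq: abs equation} with shifted data, and observe that global boundedness of $C$ keeps the step length bounded below so that finitely many steps exhaust $[0,T]$. The difference lies in where the uniform lower bound on the step comes from. The paper reads time-additivity as saying that the past contribution $\Theta_a(y|_{[0,a]})$ is a \emph{constant} in $t$, so the shifted data $\tilde p_t=p_t+\Theta_a(y|_{[0,a]})$ satisfies $[\tilde p]_\varsig=[p]_\varsig$ and the step length from \eqref{eq:tau cond} is automatically the same at every stage -- no a priori estimate is needed. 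You instead allow the memory $t\mapsto\Theta_0^{a}(y)_{a,t}$ to be genuinely time-dependent and compensate with a block-chaining recursion $R_{j+1}\le c_1R_j+c_2$ over a fixed mesh $h=(4M)^{-1/\eps}$, yielding a uniform a priori bound on $[q^{(k)}]_\varsig$. This costs you an extra hypothesis that you state explicitly (that the localized operators $\Theta_a^{\,\cdot}$ and the memory increments obey the bounds of \eqref{eq:abs cond for ex uni} with the same $C$), which is not literally contained in the bare identity $\Theta_t=\Theta_s+\Theta_{s,t}$; but it is the reading that actually matches the Volterra integral of Lemma \ref{integral lemma}, where $\Theta(y)_{s,t}=\Theta_s^t(y)_t+\Theta_0^s(y)_{s,t}$ and the second term is not constant. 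So your argument is heavier than the paper's but covers the situation the theorem is ultimately applied to, where the paper's ``constant memory'' shortcut is not available. Your uniqueness step is also genuinely stronger: the paper's iteration only yields uniqueness within the nested balls $\cB(\tilde p)$ produced by the contraction, whereas your Gr\"onwall-type argument with $y_0=\tilde y_0$ gives uniqueness among \emph{all} solutions in $\scC^\varsig_T E$, which is what the statement of the theorem actually asserts. Both proofs are correct for the abstract statement; yours is the more robust of the two.
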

 
 \begin{proof}
 By Theorem \ref{thm: abs existence and uniqueness} we know that there exists a unique solution to \eqref{eq: abs equation} on an interval $[0,\tau]$, where $\tau$ satisfies \eqref{eq:tau cond}, and $C$ is replaced by the bounding constant $M$, i.e.
 \begin{equation}\label{eq:global tau}
     \tau \leq \left[4(1+[p]_\varsig)M\right]^{-\frac{1}{\eps}}. 
 \end{equation}
By a slight modification of the proof in Theorem \ref{thm: abs existence and uniqueness} it is readily checked that the existence and uniqueness of 
$$
y_t=p_t+\Theta_{a,t}(y),\quad t\in [a,a+\tau],
$$
holds on any interval $[a,a+\tau]\subset [0,T]$, i.e. the solution is constructed in $\cB_{[a,a+\tau]}(p)$.

Now, we want iterate solutions to \eqref{eq: abs equation} to the domain $[0,T]$, by "gluing together" solutions on the integrals $[0,\tau],[\tau,2\tau]...\subset [0,T]$.  Using the time-additivity property of $\Theta$, note that for $t\in [\tau,2\tau]$, we have
\begin{equation*}
    y_t = p_t+\Theta_t(y)=p_t+\Theta_a(y)+\Theta_{a,t}(y). 
\end{equation*}
Thus, set $\tilde{p}_t=p_t+\Theta_a(y|_{[0,\tau]})$, where $y|_{[0,\tau]}$ denotes the solution to \eqref{eq: abs equation} restricted to $[0,\tau]$. Note that 
\begin{equation*}
    [\tilde{p}]_\varsig=[p]_\varsig, 
\end{equation*}
since the H\"older seminorm is invariant to constants, and $t\mapsto \Theta_a(y|_{[0,\tau]})$ is constant. Therefore, there exists a unique solution to $\eqref{eq: abs equation}$ in $\cB_{[\tau,2\tau]}(\tilde{p})$ where $\tau$ is the same as in \eqref{eq:global tau}. We can repeat this to all intervals $[k\tau,(k+1)\tau]\subset [0,T]$. 
At last, invoking the scalability of H\"older norms (see \cite[Exc. 4.24]{Friz2014}),  it follows that there exists a unique solution to \eqref{eq: abs equation} in $\scC^\varsig_T E$.

 \end{proof}

\section{Averaged fields}\label{averaged fields}

We give here a quick overview of the concept of averaged fields and averaging operators. 
We begin with the following definition: 
\begin{defn}\label{def of avg operator}
Let $\omega$ be a measurable path from $[0,T]$ to $\RR^d$, and let $g\in \cS'(\RR^d;\RR)$. We define the average of $g$ against $\omega$ as the element of $C^0\big([0,T];\cS'(\RR^d;\RR)\big)$ defined for all $s\leq t\in [0,T]$ and all test functions $\phi$ by
\[\big\la \phi,T^{\omega}_{s,t} g \big\ra = \int_s^t \la \phi(\cdot - \omega_r),g\ra\dd r.\]
\end{defn}

Introduced in the analysis of regularization by noise in \cite{Catellier2016}, the concept of averaged fields and averaging operators is by now a well studied topic. For example, the recent analysis of Galeati and Gubinelli \cite{galeati2020noiseless} provides a good overview of the analytic properties in the context of regularization by noise. See also \cite{harang2020cinfinity,galeati2020prevalence,galeati2020regularization} for further details on probabilistic and analytical aspects of averaged fields and averaging operators, and their connection to the concept of occupation measures. 
In the current article, we investigate these operators from an infinite-dimensional perspective in order to apply them in the context of (S)PDEs, and thus some extra considerations needs to be taken into account.  In addition, we include in section \ref{sec: Averaging operators with Levy noise} a construction of the averaged field associated to a fractional L\'evy process. The regularizing properties of Volterra-L\'evy processes was recently investigated in  \cite{harang2020regularity}, where the authors constructed the averaged field using the concept of local times. The construction given in the current article provides better regularity properties of the resulting averaged field than those obtained in \cite{harang2020regularity}, but comes at the cost of lost generality. More precisely, constructing the averaged field associated to a  measurable stochastic process $\omega:\Omega \times [0,T]\rightarrow \RR^d$ acting on a distribution $b\in \cS'(\RR^d)$,  the exceptional set $\Omega'\subset \Omega$ on which the function $T^{\omega} g$ is a sufficiently regular field (say, H\"older in time and differentiable in space..)  depends on  $b\in \cS'(\RR^d)$, i.e. $\Omega'=\Omega'(b)$. Thus choosing one construction or the other depends on the problem at hand, and which  properties  are important to retain. We will not investigate these differences in more details in this article, and will view the analysis of the SPDE from a purely deterministic point of view. 
Let us give the following statement which can be viewed as a short summary of some of the results appearing in \cite{galeati2020noiseless} and \cite{harang2020cinfinity}: 

\begin{prop}\label{prop: existecnce of avg operator }
There exists a $\delta$-H\"older continuous path $\omega:[0,T]\rightarrow \RR$ such that for any given $g\in \cC^\eta$ with $\eta>3-\frac{1}{2\delta}$, the corresponding averaged field $T^\omega g$ is contained in $\cC^\gamma_T\cC^\kappa$ for some $\kappa\geq 3$ and $\gamma>\frac{1}{2}$. Moreover, there exists a continuous $\omega:[0,T]\rightarrow \RR$  such that for any  $g\in \cS'(\RR)$, the averaged field $T^\omega g$ is contained in $\cC^\gamma_T\cC^\kappa(w)$ for some weight $w:\RR\rightarrow \RR_+$ and any $\gamma\in (\frac{1}{2},1)$ and any  $\kappa\in \RR$. 
\end{prop}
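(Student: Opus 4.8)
The plan is to obtain both statements as consequences of known regularity results for averaged fields, which reduce the problem to the scaling behaviour of $\|T^\omega_{s,t} g\|_{\cC^\kappa(w)}$ in terms of the local nondeterminism / prevalence properties of the path $\omega$. For the first statement I would take $\omega$ to be a sample path of a fractional Brownian motion of Hurst index $\delta$ (conditioned on the full-measure event on which the relevant estimates hold), or more generally any $\delta$-H\"older path which is \emph{$(\delta, N)$-irregular} in the sense of \cite{Catellier2016} for $N$ large enough. The key estimate (see \cite{galeati2020noiseless, harang2020cinfinity}) is that for such $\omega$ one has, for any $g \in \cC^\eta$ and any $\kappa < \eta - 1 + \tfrac{1}{2\delta}$,
\begin{equation*}
    \|T^\omega_{s,t} g\|_{\cC^\kappa} \lesssim \|g\|_{\cC^\eta} |t-s|^{\gamma}, \qquad \gamma = \tfrac12 + \delta\Big(\eta - \kappa - 1 + \tfrac{1}{2\delta}\Big) \wedge 1 .
\end{equation*}
Thus, to land in $\cC^\gamma_T\cC^\kappa$ with $\kappa \geq 3$ and $\gamma > \tfrac12$, one needs $\eta - 1 + \tfrac{1}{2\delta} > 3$, i.e. exactly $\eta > 3 - \tfrac{1}{2\delta}$, and then choose $\kappa \in [3, \eta - 1 + \tfrac{1}{2\delta})$ and $\gamma \in (\tfrac12, \tfrac12 + \delta(\eta - \kappa - 1 + \tfrac{1}{2\delta}))$. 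The Hölder-in-time bound with a \emph{uniform} constant over $(s,t)$ is precisely the statement $T^\omega g \in \cC^\gamma_T \cC^\kappa$.

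For the second statement the mechanism is the same but one pushes it to the extreme: one uses a path $\omega$ which is ``infinitely regularizing'', namely $(\delta,N)$-irregular for \emph{every} $N$ — by the prevalence results of \cite{galeati2020prevalence} (or the explicit constructions in \cite{harang2020cinfinity}) such continuous paths exist, and in fact are prevalent. For such $\omega$, the averaging operator gains arbitrarily many derivatives, so that for any $g \in \cS'(\RR)$ (which lies in some weighted space $\cC^{-m}(w_0)$ with polynomial weight $w_0$, since tempered distributions of finite order are locally finite sums of derivatives of continuous functions of polynomial growth) one gets $T^\omega g \in \cC^\gamma_T \cC^\kappa(w)$ for any prescribed $\gamma \in (\tfrac12,1)$ and any $\kappa \in \RR$, with a weight $w$ only mildly worse than $w_0$. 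The weight is needed precisely because $g$ is not assumed to decay, and the averaging in space does not improve decay; tracking how $w$ is produced from $w_0$ is routine once the unweighted local estimates are in hand, using the mapping properties of $T^\omega$ on weighted Besov spaces recorded in Appendix \ref{sub:weighted_besov}.

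The main obstacle — and the only genuinely non-routine point — is organizing the citation/black-box structure cleanly: the references \cite{galeati2020noiseless, harang2020cinfinity, galeati2020prevalence} state their results for averaged fields on $\RR^d$ in unweighted Besov spaces and for drivers in unweighted spaces, so some care is needed to (i) localize to a tempered distribution $g$, which forces the weighted setting, and (ii) verify that the Hölder-in-time seminorm one extracts is over the whole interval $[0,T]$ uniformly, not just a local estimate near the diagonal — but this last point follows from the additivity $T^\omega_{s,u} + T^\omega_{u,t} = T^\omega_{s,t}$ together with the two-parameter bound via a standard chaining argument, exactly as in the sewing-type estimates already used in Lemma \ref{integral lemma}. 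Everything else is a matter of choosing the parameters in the admissible ranges dictated by the inequalities above; I would state the proposition as essentially a corollary of the cited works and keep the proof to a few lines pointing to those estimates and to the weighted Besov machinery in the appendix.
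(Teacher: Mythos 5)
Your approach coincides with the paper's: the proof given there is two lines long and consists of citing \cite[Thm. 1]{galeati2020noiseless} for the first statement and \cite[Prop. 24]{harang2020cinfinity} for the second, which is exactly the black-box structure you propose. One arithmetic correction to your elaboration: the spatial gain for a $\delta$-H\"older ($\rho$-irregular) path at time regularity $\gamma>\frac12$ is $\frac{1}{2\delta}$, not $\frac{1}{2\delta}-1$; with your stated threshold $\kappa<\eta-1+\frac{1}{2\delta}$ the requirement $\kappa\geq 3$ would force $\eta>4-\frac{1}{2\delta}$, contradicting your own conclusion $\eta>3-\frac{1}{2\delta}$, whereas the correct intermediate bound $\kappa<\eta+\frac{1}{2\delta}$ yields the stated condition exactly.
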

\begin{proof}
The first statement can be seen as a simple version of \cite[Thm. 1]{galeati2020noiseless}. The second is proven in \cite[Prop. 24]{harang2020cinfinity}. 
\end{proof}
\begin{rem}
In fact, the statement of \cite[Thm. 1]{galeati2020noiseless} is much stronger; given a $g\in \cC^\eta$ with $\eta\in \RR$, then  for almost all  $\delta$-H\"older continuous paths $\omega:[0,T]\rightarrow \RR$ such that $\eta>3-\frac{1}{2\delta}$, the averaged field $T^\omega g\in \cC^\gamma_T\cC^3$ for some $\gamma>\frac{1}{2}$. Similarly, it is stated that almost all continuous paths $\omega$ are infinitely regularizing in the sense that for any $g\in \cC^\eta$ with $\eta\in \RR$,  $T^\omega g\in \cC^\gamma \cC^\kappa$ for any $\kappa \in \RR$. The "almost surely" statement here is given through the concept of prevalence. We refrain from writing proposition \ref{prop: existecnce of avg operator } in the most general way in order to avoid going into details regarding the concept of prevalence here. We therefore refer the reader to \cite{galeati2020noiseless} for more details on this result and this concept. 
\end{rem}

\begin{rem}
The statement in Proposition \ref{prop: existecnce of avg operator } can also be generalized to measurable paths $\omega:[0,T]\rightarrow \RR$. Indeed, in \cite{harang2020regularity} the authors show that there exists a class of measurable Volterra-L\'evy processes, which provides a regularizing effect, similar to that of Gaussian processes, and a statement similar to that of Proposition \ref{prop: existecnce of avg operator } can be found there. 
\end{rem}

in all the following we will use notions of (weighted)-Besov space. For a recap on weighted Lebesgue and Besov spaces, as long as a recap on standard (fractional)-Schauder estimates see Appendix Sections \ref{sub:weighted_lebesgue} and \ref{sub:weighted_besov}.  

We continue with some properties which will be useful in later analysis.  

\begin{prop}
Let $w$ be an admissible weight, $\kappa \in \RR$ and $1\leq p,q\leq +\infty$. Let $f \in B^{\kappa}_{p,q}(w)$. For all $j\ge -1$ and all $0\leq s \leq t \leq T$,
\[\Delta_j T^{\omega}_{s,t} g =  T^{\omega}_{s,t} (\Delta_j g),\]
where $\Delta_j$ denotes the standard Paley-Littlewood block (see Appendix \ref{sub:weighted_besov}).

In particular for all $\eps,\delta>0$, and for all $f\in B^{\kappa}_{p,q}(w)$, and for $\cS_k = \sum_{j={-1}^k }\Delta_j$,
\[ T^\omega_{s,t} (\cS_k g) \underset{k\to \infty}{\to} T^\omega_{s,t} g \quad \text{in} \quad \cS' \quad \text{and in} \quad B^{\kappa-\eps}_{p,q}(\llrr^{\delta}w).\]
Suppose that $g$ in a measurable locally bounded function, then $T^\omega_{s,t}f$ is also a measurable function one has 
\[T^\omega_{s,t} g(x) = \int_s^t g(x + \omega_r) \dd r. \]
\end{prop}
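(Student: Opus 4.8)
The plan is to prove the three assertions of the proposition in order, since each builds on the previous one.

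\textbf{Step 1: Commutation of $\Delta_j$ with $T^\omega_{s,t}$.} The Littlewood--Paley block $\Delta_j$ is a Fourier multiplier, hence a convolution operator: $\Delta_j g = \check\varphi_j * g$ for a Schwartz function $\check\varphi_j$ (and $\Delta_{-1}$ with $\check\chi$). So for a test function $\phi$, using Definition \ref{def of avg operator} and the self-adjointness/translation behaviour of convolution, I would write
\[\la \phi, \Delta_j T^\omega_{s,t} g\ra = \la \check\varphi_j(-\cdot)*\phi, T^\omega_{s,t} g\ra = \int_s^t \la (\check\varphi_j(-\cdot)*\phi)(\cdot-\omega_r), g\ra \dd r.\]
Since convolution commutes with translation, $(\check\varphi_j(-\cdot)*\phi)(\cdot-\omega_r) = \check\varphi_j(-\cdot)*(\phi(\cdot-\omega_r))$, and then $\la \check\varphi_j(-\cdot)*(\phi(\cdot-\omega_r)), g\ra = \la \phi(\cdot-\omega_r), \Delta_j g\ra$. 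Putting this back gives exactly $\la \phi, T^\omega_{s,t}(\Delta_j g)\ra$. One should check integrability of the $r$-integrand so that everything is well defined: this follows because $r \mapsto \omega_r$ is measurable and bounded on $[0,T]$ (measurable paths on a compact interval used in these averaging constructions are assumed bounded, or one localizes), and $\la \phi(\cdot-\omega_r), g\ra$ is continuous in the translation parameter with at most polynomial growth controlled by the weight, hence bounded on the relevant range.

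\textbf{Step 2: Convergence of the partial sums $\cS_k g$.} Write $\cS_k g = \sum_{j=-1}^k \Delta_j g$. By Step 1, $T^\omega_{s,t}(\cS_k g) = \cS_k (T^\omega_{s,t} g)$. Convergence in $\cS'$ is then just the standard fact that $\cS_k h \to h$ in $\cS'$ for any $h \in \cS'$, applied with $h = T^\omega_{s,t} g \in \cS'$. For convergence in $B^{\kappa-\eps}_{p,q}(\llrr^\delta w)$: from the averaging estimates recalled earlier in Section \ref{averaged fields} (and the weighted Besov machinery of Appendix \ref{sub:weighted_besov}), $T^\omega_{s,t} g \in B^{\kappa}_{p,q}(w)$ with the obvious bound, hence $\cS_k (T^\omega_{s,t} g) \to T^\omega_{s,t} g$ in $B^{\kappa-\eps}_{p,q}(\llrr^\delta w)$ by the standard soft statement that the partial Littlewood--Paley sums converge in any strictly larger Besov space (losing a bit of smoothness and a bit of weight); concretely, $\|(\mathrm{Id}-\cS_k)h\|_{B^{\kappa-\eps}_{p,q}(\llrr^\delta w)}$ is the tail $(\sum_{j>k} (2^{j(\kappa-\eps)}\|\Delta_j h\|_{L^p(\llrr^\delta w)})^q)^{1/q}$, which tends to $0$ since the full sum with exponent $\kappa$ and weight $w$ is finite and $2^{-j\eps}\to 0$, $\llrr^\delta w \lesssim \llrr^{\delta}\cdot w$ absorbs the polynomial loss.

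\textbf{Step 3: The pointwise formula when $g$ is a measurable locally bounded function.} Here I would argue that when $g$ is genuinely a function, the pairing $\la \phi(\cdot-\omega_r), g\ra$ is the honest integral $\int \phi(x-\omega_r) g(x)\dd x = \int \phi(y) g(y+\omega_r)\dd y$, and then Fubini (justified by local boundedness of $g$, boundedness of $r\mapsto\omega_r$, and the Schwartz decay of $\phi$ against the weight) gives
\[\la \phi, T^\omega_{s,t} g\ra = \int_s^t \int \phi(y) g(y+\omega_r)\dd y\, \dd r = \int \phi(y)\Big(\int_s^t g(y+\omega_r)\dd r\Big)\dd y.\]
Since this holds for all test $\phi$, $T^\omega_{s,t} g$ coincides as a distribution with the function $x \mapsto \int_s^t g(x+\omega_r)\dd r$; measurability of this function in $x$ follows from joint measurability of $(x,r)\mapsto g(x+\omega_r)$ and Fubini--Tonelli, and it is locally bounded because $g$ is locally bounded and $\omega$ is bounded on $[0,T]$.

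\textbf{Main obstacle.} The only genuinely delicate point is the integrability/regularity bookkeeping in Step 1 and Step 2: one must make sure that $T^\omega_{s,t} g$ actually lands in $B^\kappa_{p,q}(w)$ and that the Littlewood--Paley tail estimate is uniform enough to give convergence in the slightly weaker weighted space, i.e. correctly tracking how the weight $w$ interacts with the translation by $\omega_r$ (this is where the admissibility of $w$ and the polynomial control $\llrr(x-\omega_r)^s \lesssim \llrr(x)^s \llrr(\omega_r)^{|s|}$ enter). Everything else is soft functional analysis: commuting a convolution operator through a Bochner-type integral and a standard density/tail argument in Besov spaces.
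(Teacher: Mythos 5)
Your argument is correct and follows essentially the same route as the paper: both proofs reduce the commutation $\Delta_j T^\omega_{s,t}g = T^\omega_{s,t}(\Delta_j g)$ to exchanging the convolution/pairing with the time integral (justified by the seminorm bound $\cN_n(\phi(\cdot-\omega_r))\lesssim \cN_n(\phi)$, which both you and the paper use with the same implicit control on $\omega_r$), and then obtain the convergence of $\cS_k g$ and the pointwise Fubini formula as direct consequences. If anything, you supply more detail than the paper on the last two claims, which it dispatches with ``properties of weighted Besov spaces.''
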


\begin{proof}
Let us remind that the topology on $\cS$ is the one generated by the family of semi-norm
\[\cN_n(\phi) = \sum_{|k|,|l| \leq n} \sup_{x\in\RR^d} |x^k \partial^l \phi(x)|,\]
where $k$ and $l$ are multi-indices. Furthermore, for any distribution $f\in \cS'$, there exists $n\ge 0 $ and $C>0$ such that 
\[|\la \phi , g \ra | \leq C \cN_n(\phi).\]
Let $\omega$ be a measurable path from $[0,T]$ to $\RR^d$, then since $\phi$ is bounded, there is a constant $C_1>0$ such that
\[\cN_n\big(\phi(\cdot - w_r)\big) \leq C \cN_n\big(\phi\big).\]
In particular
\[\Delta_j T^{\omega}_{t} g (x) = \left\la K_j(x-\cdot), T^{\omega}_{s,t}f \right\ra = \int_0^t \left\la K_j\big(x+\omega_r)-\cdot\big), g \right\ra \dd r = \int_0^t \Delta_j g(x+\omega_r) \dd r.\]
Thanks to the previous remark, we are allowed to perform any Fubini arguments, and for all $\phi\in\cS$,
\[\big\la \phi , \Delta_j T^{\omega}_{t} g \big\ra = \int_{\RR^d} \phi(x) \int_0^t \Delta_j g(x+\omega_r) \dd r \dd x = \int_0^t \int_{\RR^d} \phi(x-\omega_r) \Delta_j g(x) \dd x \dd r,\]
which ends the proof by using properties of weighted-Besov spaces.
\end{proof}

For the purpose of this section, we will fix a distribution $g\in \cS'(\RR^d)$ and a measurable path $\omega:[0,T]\rightarrow \RR^d $ with the property that there exists an averaged field $T^\omega g:[0,T]\times \RR^d\rightarrow \RR^d$ defined as in Definition \ref{def of avg operator} which is H\"older continuous in time and three times locally differentiable in space. More specifically, we will assume that $T^\omega g\in \cC^\gamma_T\cC^\kappa(w)$ for some admissible weight function $w:\RR^d\rightarrow \RR_+\setminus \{0\}$.
\\

Our first goal is to show how the averaged field $T^w g$ can be seen as a function from $[0,T]\times \cC^\beta\rightarrow \cC^\beta$ for some $\beta\geq0$.

\begin{prop}\label{prop: inf dim reg of avg op}
Let $w$ be an admissible weight (see Section \ref{sub:weighted_lebesgue}). For a measurable path $\omega:[0,T]\rightarrow \RR^d$ and $g\in \cS'(\RR^d)$, suppose $T^\omega g\in \cC^\gamma_T\cC^\kappa(w)$  for some $\gamma>\frac{1}{2}$ and $\kappa\geq 3$. 
Then for all $x,y\in \cC^\beta$ with $\beta\in [0,1)$, we have that 
\begin{equation*}
    \begin{aligned}
         \|T^\omega_{s,t}g(x)\|_{\cC^\beta}\vee \|\nabla T^\omega_{s,t}g(x)\|_{\cC^\beta}  &\leq \sup_{|z| \leq \|x\|_{\cC^\beta}} w^{-1}(z) \|T^\omega g\|_{\cC^\gamma_T\cC^\kappa(w)} |t-s|^\gamma
         \\
         \|T^\omega_{s,t}g(x)-T^\omega_{s,t}g(y)\|_{\cC^\beta} &\leq \sup_{|z| \leq \|x\|_{\cC^\beta}\vee\|y\|_{\cC^\beta}} w^{-1}(z) \|T^\omega g\|_{\cC^\gamma_T\cC^\kappa(w)}\|x-y\|_{\cC^\beta} |t-s|^\gamma
         \\
         \|\nabla T^\omega_{s,t}g(x)-\nabla T^\omega_{s,t}g(y)\|_{\cC^\beta} &\leq \sup_{|z| \leq \|x\|_{\cC^\beta}\vee\|y\|_{\cC^\beta}} w^{-1}(z) \|T^\omega g\|_{\cC^\gamma_T\cC^\kappa(w)}\|x-y\|_{\cC^\beta} |t-s|^\gamma.
    \end{aligned}
\end{equation*}

\end{prop}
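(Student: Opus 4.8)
The plan is to split off the time variable and then reduce everything to purely classical composition estimates for a single smooth function on a ball. First I would fix $0\le s\le t\le T$ and set $F:=T^\omega_{s,t}g\in\cC^\kappa(w)$; since $T^\omega g\in\cC^\gamma_T\cC^\kappa(w)$ we have $\|F\|_{\cC^\kappa(w)}\le\|T^\omega g\|_{\cC^\gamma_T\cC^\kappa(w)}\,|t-s|^\gamma$, so this one inequality produces the factor $|t-s|^\gamma$ in all three bounds and it remains to establish, for a fixed $F\in\cC^\kappa(w)$, the three estimates with $\|F\|_{\cC^\kappa(w)}$ in place of $\|T^\omega g\|_{\cC^\gamma_T\cC^\kappa(w)}|t-s|^\gamma$. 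Throughout one reads $F(x)$ as the composition $z\mapsto F(x(z))$ on the value variable of $g$, and $\nabla F(x)$ as $z\mapsto(\nabla F)(x(z))$; the latter is exactly the pointwise multiplier representing the Fr\'echet derivative of $x\mapsto F(x)$, so $\|\nabla F(x)\|_{\cC^\beta}$ does control the operator norm of that derivative via the Besov product estimate.

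Next I would localize, which is the step that disposes of the weight. For $\beta\in(0,1)$ one has $\|x\|_{L^\infty}\lesssim\|x\|_{\cC^\beta}$, so the ranges of $x$, of $y$ and of every convex combination $qx+(1-q)y$ are contained in a fixed ball $B=B_R$ with $R\simeq\|x\|_{\cC^\beta}\vee\|y\|_{\cC^\beta}$. On $B$ the admissible weight is bounded above and below, and since $\kappa\ge 3$ the weighted Besov space embeds into $C^3(B)$ (see Appendix \ref{sub:weighted_besov}), giving $\|F\|_{C^3(B)}\lesssim\sup_{|z|\le R}w^{-1}(z)\,\|F\|_{\cC^\kappa(w)}$. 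From here $F$ may be treated as a genuine $C^3$ function on $B$, and the task is reduced to composition estimates in $\cC^\beta$ with $\|F\|_{C^3(B)}$ on the right-hand side. (The degenerate value $\beta=0$ is elementary: $\|\cdot\|_{\cC^0}\lesssim\|\cdot\|_{L^\infty}$, and all three bounds then follow from the mean value theorem.)

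For the composition estimates I would use that, for $\beta\in(0,1)$, $\cC^\beta$ is the classical H\"older space, so for any $G\in C^1(B)$ one has $\|G(x)\|_{\cC^\beta}\lesssim\|G\|_{L^\infty(B)}+\|\nabla G\|_{L^\infty(B)}[x]_{\cC^\beta}$; taking $G=F$ and $G=\nabla F$ (this uses $F\in C^2$) gives the first line, up to the harmless prefactor $1+\|x\|_{\cC^\beta}$ which is of the locally bounded type required downstream in Lemma \ref{integral lemma}. For the second line I would write, by the fundamental theorem of calculus, $F(x)-F(y)=\big(\int_0^1(\nabla F)(qx+(1-q)y)\,\dd q\big)(x-y)$, take the $\cC^\beta$-norm, pull the $q$-integral out by Minkowski's inequality, and apply the product estimate $\|fg\|_{\cC^\beta}\lesssim\|f\|_{\cC^\beta}\|g\|_{L^\infty}+\|f\|_{L^\infty}\|g\|_{\cC^\beta}$ (valid for $\beta>0$), each factor $\|(\nabla F)(qx+(1-q)y)\|_{\cC^\beta}$ being bounded as in the first step. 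The third line is obtained in exactly the same way after differentiating once more, $\nabla F(x)-\nabla F(y)=\big(\int_0^1(\nabla^2F)(qx+(1-q)y)\,\dd q\big)(x-y)$, which is where the full strength of $\kappa\ge 3$ (i.e. $\nabla^2F$ Lipschitz on $B$) is consumed.

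I expect the genuine work to sit in two places. The first is the bookkeeping of the weight: because $w$ lives on the value space of $g$ while $x$ is a $\cC^\beta$-valued field, one has to first observe that the composition only ever probes $F$ on the bounded range of $x$ and then pay the price $\sup_{|z|\le\|x\|_{\cC^\beta}}w^{-1}(z)$ when trading the weighted Besov norm for a local $C^3$ norm; getting the constants and the precise form of the right-hand side to match the statement is the fiddly part. The second is simply the sharp role of the spatial exponent: all three estimates reduce to composing $F$, its gradient, or its Hessian with a $\cC^\beta$ field, so $\kappa\ge 3$ is exactly the threshold that makes the third estimate (where $\nabla^2 F$ must be Lipschitz) work, while the first two would already go through with $\kappa\ge 2$. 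Everything else is a routine use of the fundamental theorem of calculus together with the algebra/paraproduct estimates in $\cC^\beta$ for $\beta>0$.
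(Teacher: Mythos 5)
Your proposal is correct and follows essentially the same route as the paper: localize the weighted norm to the bounded range of $x$ (paying $\sup_{|z|\le\|x\|_{\cC^\beta}}w^{-1}(z)$), then run classical $C^3$-composition estimates via the fundamental theorem of calculus, with the paper's explicit doubling-of-variables expansion of the rectangular increment being exactly your product-estimate step written out by hand. The $(1+\|x\|_{\cC^\beta})$ prefactor you flag is a real (shared) imprecision, but it is harmless since it is absorbed into the locally bounded function $H$ used downstream in Lemma \ref{integral lemma}.
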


\begin{proof}
For $x\in \cC^\beta$ note that for any $\xi\in \RR^d$
$$
|T^\omega_{s,t}g(x(\xi))|\leq w^{-1}(x(\xi))|w(x(\xi))T^\omega_{s,t}g(x(\xi))|\leq \sup_{|z|\leq \|x\|_{\cC^\beta}} w^{-1}(z) \|T^\omega g\|_{\cC^\gamma_T\cC^\kappa}.   
$$
By similar computations using that $T^\omega b$ is  (weighted) differentiable in space, it is straightforward to verify that 
\begin{equation*}
    \|T^\omega_{s,t}g(x)\|_{\cC^\beta}\vee \|\nabla T^\omega_{s,t}g(x)\|_{\cC^\beta}  \leq \sup_{|z| \leq \|x\|_{\cC^\beta}} w^{-1}(z) \|T^\omega g\|_{\cC^\gamma_T\cC^\kappa(w)} |t-s|^\gamma. 
\end{equation*}
Now consider $x,y\in \cC^\beta$. Again  using the differentiability of $T^\omega b$, and elementary rules of calculus we observe that for any $\xi\in \RR^d$
\begin{equation*}
    |T^\omega_{s,t}g(x(\xi))-T^\omega_{s,t}g(y(\xi))|\leq \sup_{|z|\leq \|x\|_{\cC^\beta}\vee \|y\|_{\cC^\beta}} w^{-1}(z) \|T^\omega g\|_{\cC^\gamma_T\cC^\kappa(w)} \|x-y\|_{\cC^\beta}|t-s|^\gamma,
\end{equation*}
where we have used that $|x(\xi)-y(\xi)|\leq \|x-y\|_{\cC^\beta}$ for all $\xi\in \RR^d$.
At last, for $\xi,\xi'\in \RR^d$, observe that 
\begin{equation*}
    \begin{aligned}
       & T^\omega_{s,t}g(x(\xi))-T^\omega_{s,t}g(y(\xi))-T^\omega_{s,t}g(x(\xi'))+T^\omega_{s,t}g(y(\xi')) 
        \\
      &  = \int_0^1 \nabla T^{\omega}_{s,t}g\Big(l\big(x(\xi) - y(\xi) \big) + y(\xi) \Big)\big(x(\xi) - x(\xi') - y(\xi) + y(\xi')\big) \dd l \\
    &+ \int_0^1 \int_0^1 D^2 T^{\omega}_{s,t}g\Big(\Lambda(l,l')\Big)  \Big(l\big(x(\xi) - x(\xi')\big) +(1-l)\big(y(\xi) - y(\xi')\big)\Big) \otimes\big(x(\xi)-y(\xi)\big) \dd l' \dd l,
    \end{aligned}
\end{equation*}
with
\[
\Lambda(l,l') =  l'\Big(l\big(x(z)-x(z')\big)+(1-l)\big(y(z)-y(z')\big)\Big) + lx(z') + (1-l)y(z').
\]
Since $T^\omega g$ is twice (weighted) differentiable, it follows that 
\begin{equation*}
    \|T^\omega_{s,t}g(x)-T^\omega_{s,t} g(y)\|_{\cC^\beta} \leq \sup_{|z|\leq \|x\|_{\cC^\beta}\vee\|y\|_{\cC^\beta}} w^{-1}(z)\|T^\omega g\|_{\cC^\gamma_T\cC^\beta(w)} \|x-y\|_{\cC^\beta}|t-s|^\gamma. 
\end{equation*}
A similar argument for $\nabla T^\omega b$, using that $\kappa\geq 3$, reveals that 
\begin{equation*}
    \|\nabla T^\omega_{s,t}g(x)-\nabla T^\omega_{s,t}g(y)\|_{\cC^\beta} \leq \sup_{|z|\leq \|x\|_{\cC^\beta}\vee\|y\|_{\cC^\beta}} w^{-1}(z)\|T^\omega b\|_{\cC^\gamma_T\cC^\beta(w)} \|x-y\|_{\cC^\beta}|t-s|^\gamma,
\end{equation*}
which concludes this proof. 
\end{proof}

\section{Existence and uniqueness of the mSHE}\label{sec: existence and uniqueness of mSHE}

We are now ready to formulate the multiplicative stochastic heat equation using the abstract non-linear Young integral and the averaged field $T^\omega g$ introduced in the previous section. 
\subsection{Standard multiplicative Stochastic Heat equation with additive noise}
Recall that the mSHE with additive (time)-noise is given in its mild form by
\begin{equation}\label{eq:mSHE particular}
    u_t=P_tu_0+\int_0^t P_{t-s}\xi  g(u_s)\dd s+\omega_t, \quad u_0\in \cC^\beta, 
\end{equation}
where $\xi\in \cC^{-\vartheta}$ for some $0\leq \vartheta <\beta<2-\vartheta$, $P$ denotes the standard heat semi-group acting on functions $u$ through convolution, and $\omega:[0,T]\rightarrow \RR$ is a measurable path.  We will see in this section that $g$ can be chosen to be distributional given that $\omega$ is sufficiently irregular. 

Similarly as is done for pathwise regularization by noise for SDEs (e.g. \cite{Catellier2016}), we begin to consider \eqref{eq:mSHE particular} with a smooth function $g$, and  we set $\theta=u-\omega$, and then study the integral equation
\begin{equation}\label{eq: transformed eq}
    \theta_t =P_t u_0 +\int_0^t P_{t-s}\xi  g(\theta_s+\omega_s)\dd s. 
\end{equation}
The integral term can be written on the form of a non-linear Young--Volterra integral. 
To motivate this, we begin with the following observation: 
Define the linear operator  $S_{t}=P_t\xi$, whose action is defined by 
\begin{equation}\label{eq:heat volterra noise}
    S_t f:=\int_{\RR} P_t(x-y)\xi(y)f(y)\dd y. 
\end{equation}
For $\beta>\vartheta$ the classical Schauder estimates for the heat equation tells us that 
\begin{equation*}
    \|S_t f\|_{\cC^\beta} = \|S_t f\|_{\cC^{-\vartheta + 2 \frac{\beta+\vartheta}{2}}}   \lesssim t^{-\frac{\beta+\vartheta}{2}} \|\xi f\|_{\cC^{-\vartheta}}\lesssim t^{-\frac{\beta+\vartheta}{2}}\|\xi\|_{\cC^{-\vartheta}}\|f\|_{\beta}, 
\end{equation*}
where in the last estimate we have used that the product between the distribution $\xi\in \cC^{-\vartheta}$ and the function $f\in \cC^\beta$ since $\beta>\vartheta$. 
Thus we can view $S$ as a bounded linear operator from $\cC^\beta$ to itself for any $\beta>\vartheta$.
This motivates the next proposition: 

\begin{prop}\label{prop:regualrity of P as S xi}
If $\vartheta \ge 0$ and $\beta>\vartheta$, then the operator $S$ defined in \eqref{eq:heat volterra noise} is a linear operator on $\cC^\beta$ and satisfies Hypothesis \ref{kernel hypothesis} with singularity $\rho = \frac{\beta+\vartheta}{2}$.    
\end{prop}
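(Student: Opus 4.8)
The plan is to verify the three conditions of Hypothesis \ref{kernel hypothesis} for $S_t = P_t \xi$ with $\rho = \frac{\beta+\vartheta}{2}$, and the essential tool throughout is the combination of two facts: (a) the standard heat kernel/Schauder estimates for $P_t$ acting between Besov-H\"older spaces (Corollary \ref{cor: heat kernel estimates} and the fractional Schauder estimates in the Appendix), which for $s \le t$ give bounds of the type $\|(P_t - P_s) h\|_{\cC^{\beta'}} \lesssim (t-s)^\theta s^{-\theta - \frac{\beta' - \beta''}{2}} \|h\|_{\cC^{\beta''}}$ for $\beta' \ge \beta''$, $\theta \in [0,1]$; and (b) the paraproduct/Bony estimate for multiplication in Besov spaces, which since $\beta > \vartheta > 0$ gives $\|\xi f\|_{\cC^{-\vartheta}} \lesssim \|\xi\|_{\cC^{-\vartheta}} \|f\|_{\cC^\beta}$. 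The point is that $S_t f = P_t(\xi f)$ where $\xi f \in \cC^{-\vartheta}$, and we need to gain $\beta + \vartheta = 2\rho$ derivatives to land back in $\cC^\beta$, which is exactly the $t^{-\rho}$ singularity predicted.

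First I would dispatch (i): write $S_t f = P_t(\xi f)$, apply the Schauder estimate with source regularity $-\vartheta$ and target regularity $\beta = -\vartheta + 2\rho$ to get $\|S_t f\|_{\cC^\beta} \lesssim t^{-\rho}\|\xi f\|_{\cC^{-\vartheta}} \lesssim t^{-\rho}\|\xi\|_{\cC^{-\vartheta}}\|f\|_{\cC^\beta}$, which is precisely (i) (this is essentially already done in the paragraph preceding the proposition). For (ii), write $(S_t - S_s) f = (P_t - P_s)(\xi f)$ and invoke the difference estimate \eqref{eq: cont sing ineq} (equivalently the Appendix heat kernel estimates) with source space $\cC^{-\vartheta}$, target space $\cC^\beta$, and exponent $\theta$: this yields $\|(P_t - P_s)(\xi f)\|_{\cC^\beta} \lesssim (t-s)^\theta s^{-\theta - \rho}\|\xi f\|_{\cC^{-\vartheta}} \lesssim (t-s)^\theta s^{-\theta-\rho}\|\xi\|_{\cC^{-\vartheta}}\|f\|_{\cC^\beta}$, which is (ii). In both of these I am using that $\xi f$ makes sense as an element of $\cC^{-\vartheta}$ with the stated norm bound, uniformly — the multiplication step is where the hypothesis $\beta > \vartheta$ is consumed.

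The most delicate is (iii), the second-order increment $\big((S_{\tau-t} - S_{\tau-s}) - (S_{\tau'-t} - S_{\tau'-s})\big) f$. Here I would again factor out $\xi f$ and reduce to the analogous statement for the bare heat semigroup: I need
\[
\big\|\big((P_{\tau-t} - P_{\tau-s}) - (P_{\tau'-t} - P_{\tau'-s})\big) h\big\|_{\cC^\beta} \lesssim (\tau - \tau')^{\theta'}(t-s)^\theta (\tau' - t)^{-\theta-\theta'-\rho}\|h\|_{\cC^{-\vartheta}}
\]
for $0 \le s \le t \le \tau' \le \tau \le T$. The natural route is to write this as a double integral, $\int_{\tau'}^{\tau}\int_{s}^{t} \partial_r \partial_q P_{r-q}\, h\, \dd q\, \dd r$ (with $r$ ranging over the "$\tau$" variable and $q$ over the "$t$" variable, so $r - q \ge \tau' - t > 0$), then use the smoothing estimate for two time-derivatives of the heat semigroup: $\|\partial_r\partial_q P_{r-q} h\|_{\cC^\beta} \lesssim (r-q)^{-2 - \rho}\|h\|_{\cC^{-\vartheta}}$ when differentiating once is "worth" one factor of $(r-q)^{-1}$; then interpolate — raising $\partial_r$ to a fractional power $\theta'$ and $\partial_q$ to a fractional power $\theta$ — to get the integrand bounded by $(r-q)^{-\theta-\theta'-\rho}$ times whatever is consistent, and integrate $\int_{\tau'}^\tau \dd r \int_s^t \dd q$ against $(\tau' - t)^{-\theta-\theta'-\rho}$ using $r - q \ge \tau' - t$. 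This produces the factor $(\tau-\tau')(t-s)(\tau'-t)^{-\theta-\theta'-\rho}$; the fractional powers $(\tau-\tau')^{\theta'}(t-s)^{\theta}$ rather than full powers come from the standard trick of combining the integral bound with the trivial bound (each single difference $P_a - P_b$ estimated directly by (ii)-type bounds) via interpolation. I expect the bookkeeping of exponents — making sure $\theta, \theta' \in [0,1]$ and that the resulting negative power of $(\tau'-t)$ is exactly $-\theta-\theta'-\rho$ — to be the only real subtlety; conceptually (iii) is just "two applications of (ii)" and follows the same pattern used to prove \eqref{eq: cont sing ineq} in the Appendix. Finally I would remark that all constants are uniform in $f$ (linearity) and that linearity of $S_t$ on $\cC^\beta$ is immediate from $S_t = P_t \circ (\xi \cdot)$, both of which are bounded linear maps on $\cC^\beta$ by the above.
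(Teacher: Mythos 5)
Your proposal is correct and follows essentially the same route as the paper: factor $S_t f = P_t(\xi f)$, use the Bony/paraproduct bound $\|\xi f\|_{\cC^{-\vartheta}} \lesssim \|\xi\|_{\cC^{-\vartheta}}\|f\|_{\cC^\beta}$ (valid since $\beta > \vartheta$), and then invoke the heat kernel estimates of Corollary \ref{cor: heat kernel estimates}. The only difference is that you sketch a re-derivation of the rectangular increment bound (iii), whereas the paper simply cites Corollary \ref{cor: heat kernel estimates}, which already contains that estimate (proved in the Appendix by essentially the double-integral-plus-interpolation argument you describe).
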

\begin{proof}
This follows from the properties of the heat kernel proven in  Corollary \ref{cor: heat kernel estimates} in Section \ref{sub:heat_kernel}, and the fact that the para-product $\xi f$ satisfies the bound $\|\xi f\|_{\cC^{-\vartheta}}\leq\|\xi\|_{\cC^{-\vartheta}} \|f\|_{\cC^\beta}$ due to the assumption that $\beta>\vartheta$. See \cite{BahCheDan} for more details on the para-product in Besov spaces. 
\end{proof}

The integral in \eqref{eq: transformed eq} can therefore be written as 
\begin{equation*}
    \int_0^t P_{t-s} \xi g(\theta_s+\omega_s)\dd s =\int_0^t S_{t-s} g(\theta_s+\omega_s)\dd s. 
\end{equation*}
Furthermore, the classical Volterra integral on the right hand side can be written in terms of the averaged field $T^\omega g$, in the sense that 
\begin{equation*}
    \int_0^t S_{t-s} g(\theta_s+\omega_s)\dd s=\int_0^t S_{t-s} T^\omega_{\dd s}g(\theta_s),  
\end{equation*}
where for a partition $\cP$ of $[0,t]$ with infinitesimal mesh, we define
\begin{equation*}
    \int_0^t S_{t-s} T^\omega_{\dd s}g(\theta_s):=\lim_{|\cP|\rightarrow 0} \sum_{[u,v]\in \cP} S_{t-u} T^\omega_{u,v}g(\theta_u). 
\end{equation*}
It is not difficult to see (and we will rigorously prove it later) that when $g$ is smooth, the above definition agrees with the classical Riemann definition of the integral. However, the advantage with this formulation of the integral is that $T^\omega g$ might still make sense as a function when $g$ is only a distribution, and thus we will see that this formulation allows for an extension of the concept of integration to distributional coefficients $g$.

\begin{prop}\label{prop:existence of particular integral}
Consider a measurable path $\omega:[0,T]\rightarrow \RR$ and $g\in \cS'(\RR)$, and suppose  $T^\omega g\in \cC^\gamma_T\cC^\kappa(w)$ for some $\gamma>\frac{1}{2}$ and $\kappa\geq 3$. For some $0 < \beta < 1$, suppose $S:[0,T]\rightarrow \cL( \cC^\beta)$ satisfies Hypothesis \ref{kernel hypothesis} for some $0\leq \rho< 1$ such that $\gamma-\rho>1-\gamma$. Take $\gamma-\rho>\varsig >1-\gamma$ Then for any $y\in \scC^\varsig_T\cC^\beta$, the integral
\begin{equation}\label{eq:particular integral}
    \Theta(y)_t:= \int_0^t S_{t-s} T^\omega_{\dd s}g(y_s):=\lim_{|\cP|\rightarrow 0} \sum_{[u,v]\in \cP} S_{t-u} T^\omega_{u,v}g(y_u)
\end{equation}
exists as a non-linear Young-Volterra integral according to Lemma \ref{integral lemma}. 
\end{prop}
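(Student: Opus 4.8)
The plan is to verify that the pair $(T^\omega g, S)$ satisfies all the hypotheses of Lemma \ref{integral lemma} with the Banach space $E = \cC^\beta$, and then simply invoke that lemma. The statement is essentially a matter of checking boxes, so the proof should be short.

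First I would recall that by Proposition \ref{prop: inf dim reg of avg op}, the averaged field $T^\omega g$, viewed as a map $[0,T]\times\cC^\beta\to\cC^\beta$, satisfies the two estimates required of $X$ in Lemma \ref{integral lemma}: namely, setting $X_{s,t} := T^\omega_{s,t} g$, we have
\begin{equation*}
\|X_{s,t}(x)\|_{\cC^\beta} \lesssim H(\|x\|_{\cC^\beta})\,|t-s|^\gamma, \qquad \|X_{s,t}(x)-X_{s,t}(y)\|_{\cC^\beta}\lesssim H(\|x\|_{\cC^\beta}\vee\|y\|_{\cC^\beta})\,\|x-y\|_{\cC^\beta}\,|t-s|^\gamma,
\end{equation*}
where $H(z) := \sup_{|z'|\le z} w^{-1}(z')\,\|T^\omega g\|_{\cC^\gamma_T\cC^\kappa(w)}$ is a positive, locally bounded function on $\RR_+$ (it is locally bounded because $w$ is an admissible weight, hence $w^{-1}$ is locally bounded). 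The hypothesis $\kappa\ge 3$ in the proposition is exactly what guarantees the second derivative bounds used in Proposition \ref{prop: inf dim reg of avg op}. This takes care of assumptions (i) and (ii) on $X$.

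Next I would observe that $S$ is a $\rho$-singular operator on $\cC^\beta$ in the sense of Hypothesis \ref{kernel hypothesis} — this is assumed in the statement. It then remains only to check the numerology: Lemma \ref{integral lemma} requires $\gamma > \tfrac12$, $0\le\rho\le\gamma$, $0<\varsig<\gamma-\rho$, and $\varsig+\gamma>1$. The condition $\gamma>\frac12$ is part of the hypothesis on $T^\omega g$; the condition $\gamma-\rho>1-\gamma$ together with the choice $\gamma-\rho>\varsig>1-\gamma$ forces $\varsig<\gamma-\rho$ (so $\rho<\gamma\le 1$, in particular $\rho\le\gamma$) and $\varsig>1-\gamma$, i.e. $\varsig+\gamma>1$. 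Note also that $1-\gamma<\gamma-\rho$ requires $\rho<2\gamma-1<\gamma$, so $0\le\rho\le\gamma$ holds. Hence all hypotheses of Lemma \ref{integral lemma} are met with $E=\cC^\beta$.

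The conclusion is then immediate: Lemma \ref{integral lemma} produces the non-linear Young--Volterra integral
\begin{equation*}
\Theta(y)_t = \lim_{|\cP|\to 0}\sum_{[u,v]\in\cP} S_{t-u}\,T^\omega_{u,v}g(y_u)
\end{equation*}
as a well-defined element of $\scC^\varsig_T\cC^\beta$ for every $y\in\scC^\varsig_T\cC^\beta$, together with the bound \eqref{eq:bound on theta int }. There is no real obstacle here; the only mild subtlety — and the one point worth spelling out — is the identification of the locally bounded function $H$ coming out of Proposition \ref{prop: inf dim reg of avg op} with the function $H$ appearing in Lemma \ref{integral lemma}, and the verification that the parameter constraints of the two results are compatible, which is the bookkeeping carried out above.
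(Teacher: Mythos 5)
Your proposal is correct and follows essentially the same route as the paper: the paper's proof also sets $E=\cC^\beta$, invokes Proposition \ref{prop: inf dim reg of avg op} to verify conditions (i)--(ii) of Lemma \ref{integral lemma} with the same choice $H(x)=\sup_{|z|\leq x}w^{-1}(z)\|T^\omega g\|_{\cC^\gamma_T\cC^\kappa(w)}$, and then applies Lemma \ref{integral lemma} directly. Your additional bookkeeping of the parameter constraints is a harmless (and slightly more explicit) elaboration of what the paper leaves implicit.
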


\begin{proof}
Let $E:=\cC^\beta$. Since $T^\omega g\in \cC^\gamma_T\cC^\kappa(w)$ for some $\gamma>\frac{1}{2}$ and $\kappa\geq 3$, it follows from Proposition \ref{prop: inf dim reg of avg op} that $T^\omega g$ can be seen as a function from $[0,T]\times E$ to $E$ that satisfies {\rm (i)-(ii)} in Lemma \ref{integral lemma} with $H(x):=\sup_{|z|\leq x} w^{-1}(z)\|T^\omega g\|_{\cC^\gamma_T\cC^\kappa(w)}$.  Lemma \ref{integral lemma} then implies that \eqref{eq:particular integral} exists, and satisfies the bound in \eqref{eq:bound on theta int }. 
\end{proof}

As an immediate consequence of the above proposition and Proposition \ref{prop: stability of theta}, we obtain the following Proposition: 
\begin{prop}\label{prop:particularstability}
Under the same assumptions as in Proposition \ref{prop:existence of particular integral}, for two paths $x,y\in \scC^\varsig_T\cC^\beta$ we have that 
\begin{equation*}
    [\Theta(y)-\Theta(x)]_\varsig\lesssim_{P,\xi} \Big\{\sup_{z\leq \|x\|_\infty \vee \|y\|_\infty} w^{-1}(z) \|T^\omega g\|_{\cC^\gamma_T\cC^\kappa}\Big\}\left( \|x_0-y_0\|_{\cC^\beta}+ [x-y]_\varsig\right).
\end{equation*}
Moreover, let $\{g_n\}_{n\in \NN}$ be a sequence of smooth functions converging to $g\in \cS'(\RR)$ such that $T^\omega g_n\rightarrow T^\omega g\in \cC^\gamma_T\cC^\kappa(w)$. 
Then the sequence of integral operators $\{\Theta_n\}_{n\in \NN}$ built from $T^\omega g_n$ converge to $\Theta$, in the sense that for any $y\in \scC^\varsig_T\cC^\beta$
\begin{equation*}
    \lim_{n\rightarrow \infty}\|\Theta_n(y)-\Theta(y)\|_{\cC^{\varsig}_T\cC^\beta}=0,
\end{equation*}
for any $1-\gamma<\varsig<\gamma-\rho$. 
\end{prop}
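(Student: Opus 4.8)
The plan is to derive both claims directly from the stability estimate of Proposition~\ref{prop: stability of theta}, since $\Theta$ here is precisely the non-linear Young--Volterra integral operator built in Lemma~\ref{integral lemma} from the field $X_{s,t}(x):=T^\omega_{s,t}g(x)$ (and similarly $X^n$ from $T^\omega g_n$), all acting on the Banach space $E:=\cC^\beta$ with $S$ satisfying Hypothesis~\ref{kernel hypothesis} with singularity $\rho=\frac{\beta+\vartheta}{2}$. First I would check that the hypotheses of Proposition~\ref{prop: stability of theta} are met: by Proposition~\ref{prop: inf dim reg of avg op}, the fields $X=T^\omega g$ and $X^n=T^\omega g_n$ satisfy conditions (i)--(iii) of \eqref{eq: conditions for stability} with the common locally bounded function $H(x):=\sup_{|z|\le x} w^{-1}(z)\,\|T^\omega g\|_{\cC^\gamma_T\cC^\kappa(w)}$ (enlarging it to dominate all the $\|T^\omega g_n\|$ as well, which is possible since these converge). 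The first assertion is then immediate: apply \eqref{eq:stability bound} with $X^1=X^2=T^\omega g$, so that the $H_{X^1-X^2}$ term vanishes and one is left with
\[
[\Theta(y)-\Theta(x)]_\varsig \lesssim \Big\{\sup_{0\le z\le \|x\|_\infty\vee\|y\|_\infty} w^{-1}(z)\,\|T^\omega g\|_{\cC^\gamma_T\cC^\kappa(w)}\Big\}\,\big(\|x_0-y_0\|_{\cC^\beta}+[x-y]_\varsig\big)\,T^{\gamma-\rho+\varsig},
\]
absorbing the fixed factors $T^{\gamma-\rho+\varsig}$ and $[x]_\varsig+[y]_\varsig$ (bounded on the ball, or simply kept implicit in $\lesssim_{P,\xi}$) into the constant; this is exactly the stated bound.

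For the second assertion, the natural choice is $X^1=T^\omega g_n$ and $X^2=T^\omega g$. The quantity $Z:=X^1-X^2=T^\omega(g_n-g)$ needs to satisfy \eqref{eq: diff bounds in X} with a function $H_{X^1-X^2}=H_n$ that tends to $0$; by the same computation as in Proposition~\ref{prop: inf dim reg of avg op} applied to the distribution $g_n-g$, one gets \eqref{eq: diff bounds in X} with
\[
H_n(x):=\sup_{|z|\le x} w^{-1}(z)\,\|T^\omega g_n - T^\omega g\|_{\cC^\gamma_T\cC^\kappa(w)},
\]
which converges to $0$ locally uniformly by hypothesis. Plugging into \eqref{eq:stability bound} with $y^1=y^2=y$ kills the first ($D_{y^1,y^2}$-type) term, leaving
\[
[\Theta_n(y)-\Theta(y)]_\varsig \lesssim \Big(\sup_{0\le z\le \|y\|_\infty} H_n(z)\Big)\,[y]_\varsig\, T^{\gamma-\varsig-\rho} \xrightarrow[n\to\infty]{} 0.
\]
Together with the bound on the value at $t=0$, namely $\Theta_n(y)_0=\Theta(y)_0=0$ (the integral from $0$ to $0$ vanishes), this gives $\|\Theta_n(y)-\Theta(y)\|_{\scC^\varsig_T\cC^\beta}\to 0$, which is the claimed convergence; the restriction $1-\gamma<\varsig<\gamma-\rho$ is exactly the admissible range from Lemma~\ref{integral lemma} and Proposition~\ref{prop:existence of particular integral}.

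The only genuinely nontrivial point is verifying that $Z=T^\omega(g_n-g)$ satisfies \eqref{eq: diff bounds in X} with the vanishing bound $H_n$; everything else is bookkeeping. But this is a routine rerun of the proof of Proposition~\ref{prop: inf dim reg of avg op} with $g$ replaced by $g_n-g$ — the mean-value/Taylor expansion arguments there used only the weighted $\cC^\kappa$-norm of the field, $\kappa\ge 3$, and the fact that $T^\omega(g_n-g)\in\cC^\gamma_T\cC^\kappa(w)$ with norm $\|T^\omega g_n-T^\omega g\|_{\cC^\gamma_T\cC^\kappa(w)}$, so no new estimate is needed. Thus the proof is essentially a two-line invocation of Proposition~\ref{prop: stability of theta} in the two relevant regimes; I would state both specializations explicitly and note that the passage from the $\scC^\varsig$-seminorm to the full norm is immediate because all integrals start at $0$.
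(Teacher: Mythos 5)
Your proposal is correct and follows essentially the same route as the paper: the first claim is the specialization $X^1=X^2=T^\omega g$ of Proposition \ref{prop: stability of theta}, and the second is obtained by applying Lemma \ref{integral lemma}/Proposition \ref{prop: stability of theta} to the difference field $T^\omega g_n - T^\omega g$, whose constant $H_n$ is controlled by $\|T^\omega g_n - T^\omega g\|_{\cC^\gamma_T\cC^\kappa(w)}\to 0$. Your explicit remarks on absorbing the factor $[x]_\varsig+[y]_\varsig$ into the implicit constant and on $\Theta_n(y)_0=\Theta(y)_0=0$ are minor completeness points the paper leaves tacit, but the argument is the same.
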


\begin{proof}
A combination of Proposition \ref{prop:existence of particular integral} and Proposition \ref{prop: stability of theta} gives the first claim. For the second, consider the field $X^n_{s,t}(x)=\left(T^\omega_{s,t} g_n-T^\omega_{s,t}g\right)(x)$. Then $X$ satisfies condition {\rm (i)-(ii)} of Lemma \ref{integral lemma} with $H(x)=\sup_{|z|\leq x} w^{-1}(z) \|T^\omega g_n-T^\omega g\|_{\cC^\gamma_T\cC^\kappa(w)}$.  Thus by Proposition \ref{prop: stability of theta} it follows that the integral operator $\Theta^{X^n}:\scC^\varsig \cC^\beta \rightarrow \scC^\varsig \cC^\beta$ built from $X^n$ satisfies for any $y\in \scC^\varsig_T\cC^\beta$
\begin{equation*}
    \|\Theta^{X^n}(y)\|_{\cC^\varsig_T\cC^\beta} \lesssim_{S,\xi}  \sup_{|z|\leq \|y\|_\infty}w^{-1}(z) \|T^\omega g_n-T^\omega g\|_{\cC^\gamma_T\cC^\kappa(w)}(1+[y]_\varsig). 
\end{equation*}
Thus taking the limits when $n\rightarrow \infty$, it follows that $\|\Theta^{X^n}(y)\|_{\cC^\varsig_T\cC^\beta}\rightarrow 0$ due to the assumption that $\|T^\omega g_n-T^\omega g\|_{\cC^\gamma_T\cC^\kappa(w)}\rightarrow 0$ when $n\rightarrow \infty$. 
\end{proof}

 We are now ready to prove existence and uniqueness of solutions to \eqref{eq: transformed eq} in the non-linear Volterra-Young formulation, through an application of the abstract existence and uniqueness results of equations on the form
\begin{equation*}
    \theta_t= p_t+\Theta(\theta)_t,\quad t\in [0,T], 
\end{equation*}
proven in Theorem \ref{thm: abs existence and uniqueness}.  In combination with Proposition \ref{prop:existence of particular integral} and \ref{prop:particularstability} we consider 
 $p_t=P_t \psi$ for some $\psi\in \cC^\beta$, $\Theta$ is the integral operator constructed in \eqref{eq:particular integral} with $S_{t}=P_{t}\xi$ for some $\xi\in \cC^{-\vartheta}$ with $\beta>\vartheta$.

\begin{thm}\label{thm: existence and uniqunes theta}
Consider a measurable path $\omega:[0,T]\rightarrow \RR$ and $g\in \cS'(\RR)$, and suppose  $T^\omega g\in \cC^\gamma_T\cC^\kappa(w)$ for some $\gamma>\frac{1}{2}$ and $\kappa\geq 3$. 

Let $0<\vartheta < 1$ and $\vartheta<\beta < 2-\vartheta$. Let $\xi\in \cC^{-\vartheta}$ and $\rho = \frac{\beta - \vartheta}{2}$ and suppose that $1-\gamma<\gamma - \rho$. Suppose $\psi\in \cC^\beta$. Then there exists a time $\tau\in (0,T]$ such that there exists a unique $\theta\in \cB_\tau(P\psi)\subset \scC^\varsig_\tau \cC^\beta$ which solves the non-linear Young equation
\begin{equation}\label{eq:thetaeq particular}
    \theta_t = P_t \psi +\int_0^t P_{t-s}\xi T^\omega_{\dd s}g(\theta_s),\quad t\in [0,\tau],
\end{equation}
for any $\gamma-\rho >\varsig > 1-\gamma$ where $S_t:=P_t\xi$ as defined Proposition \ref{prop:regualrity of P as S xi}, and the integral is understood in the sense of proposition \ref{prop:existence of particular integral}. There exists a $C=C({P,\xi})>0$ such that the solution $\theta$ satisfies the following bound 
\begin{equation}\label{eq:bound local solution particular}
       [\theta]_{\varsig;\tau} \leq  C\left([P\psi]_{\varsig,\tau} +  2\sup_{|z|\leq 1+\|\psi\|_{\cC^\beta} +[P\psi]_{\varsig;\tau}}w^{-1}(z) \|T^\omega g\|_{\cC^\gamma_T\cC^\kappa}\right).  
\end{equation}
Moreover, if $w\simeq 1$, then there exists a unique $\theta\in \scC^\varsig_T\cC^\beta$ which is a global  solution, in the sense that $\tau=T$. 
\end{thm}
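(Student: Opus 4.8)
The plan is to recast equation~\eqref{eq:thetaeq particular} as an abstract fixed-point problem $\theta_t = p_t + \Theta(\theta)_t$ in the Banach space $E=\cC^\beta$, with $p_t := P_t\psi$ and $\Theta(y)_t := \int_0^t P_{t-s}\xi\,T^\omega_{\dd s}g(y_s)$, and then to invoke Theorem~\ref{thm: abs existence and uniqueness} for the local statement and Theorem~\ref{Global Existence} for the global one. First I would fix $\varsig$ with $1-\gamma<\varsig<\gamma-\rho$: this interval is nonempty precisely because $1-\gamma<\gamma-\rho$, and since $\rho<1$ the same condition forces $\rho<\gamma$, so the triple $(\gamma,\rho,\varsig)$ meets the standing assumptions of Lemma~\ref{integral lemma} ($\gamma>\frac{1}{2}$, $0\le\rho\le\gamma$, $0<\varsig<\gamma-\rho$, $\varsig+\gamma>1$). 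Using $\beta>\vartheta$, Proposition~\ref{prop:regualrity of P as S xi} shows that $S_t=P_t\xi$ is a $\rho$--singular operator on $\cC^\beta$ in the sense of Hypothesis~\ref{kernel hypothesis}, and Proposition~\ref{prop: inf dim reg of avg op} (where $\kappa\ge3$ is used) identifies $T^\omega g$ as a map $[0,T]\times\cC^\beta\to\cC^\beta$ satisfying conditions (i)--(iii) of Proposition~\ref{prop: stability of theta} with prefactor $H(x):=\sup_{|z|\le x}w^{-1}(z)\|T^\omega g\|_{\cC^\gamma_T\cC^\kappa(w)}$, which is locally bounded because $w$ is an admissible weight. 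Proposition~\ref{prop:existence of particular integral} then guarantees that $\Theta$ is a well-defined map $\scC^\varsig_T\cC^\beta\to\scC^\varsig_T\cC^\beta$ with $\Theta(y)_0=0$.

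Next I would check the two hypotheses~\eqref{eq:abs cond for ex uni} of Theorem~\ref{thm: abs existence and uniqueness}. The first one is the bound~\eqref{eq:bound on theta int } of Lemma~\ref{integral lemma}, read off with this $S$ and this $H$: it yields $[\Theta(y)]_\varsig\le C(\|y\|_\infty)(1+[y]_\varsig)\tau^\eps$ for some $\eps>0$, with $C(x):=c_{P,\xi}\sup_{0\le z\le x}w^{-1}(z)\|T^\omega g\|_{\cC^\gamma_T\cC^\kappa(w)}$ — a positive, increasing, locally bounded function. The strictly positive power of the horizon here comes from the two boundary terms in the sewing estimate and is exactly where $\varsig<\gamma-\rho$ and $\varsig+\gamma>1$ are consumed. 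The second hypothesis, the Lipschitz-type estimate, is precisely Proposition~\ref{prop:particularstability} (itself a consequence of Proposition~\ref{prop: stability of theta}), again with the same $C$. I would also record that $p=P\psi\in\scC^\varsig_T\cC^\beta$: the heat-kernel bounds of Corollary~\ref{cor: heat kernel estimates} (equation~\eqref{eq: cont sing ineq} with $\alpha=2$, $\rho=0$) give $\|P_t\psi\|_{\cC^\beta}\lesssim\|\psi\|_{\cC^\beta}$ and $\|(P_t-P_s)\psi\|_{\cC^\beta}\lesssim|t-s|^\zeta s^{-\zeta}\|\psi\|_{\cC^\beta}$ for every $\zeta\in[0,1]$, whence $[P\psi]_\varsig<\infty$.

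With these ingredients in place, Theorem~\ref{thm: abs existence and uniqueness} produces a $\tau\in(0,T]$ obeying~\eqref{eq:tau cond} and a unique $\theta\in\cB_\tau(P\psi)\subset\scC^\varsig_\tau\cC^\beta$ solving $\theta_t=P_t\psi+\Theta(\theta)_t$; this is the claimed local solution. For the quantitative bound~\eqref{eq:bound local solution particular} I would revisit the invariance step~\eqref{eq:boundedness of gamma} in that proof: on $\cB_\tau(P\psi)$ one has $[\theta]_\varsig\le1+[P\psi]_\varsig$ and $\|\theta\|_\infty\le1+\|\psi\|_{\cC^\beta}+[P\psi]_\varsig$, and feeding these into $[\theta]_\varsig\le[P\psi]_\varsig+C(\|\theta\|_\infty)(1+[\theta]_\varsig)\tau^\eps$ together with the defining inequality for $\tau$ gives the stated estimate. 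For the global statement, if $w\simeq1$ then $w^{-1}$ is bounded, so $C$ is dominated by a constant $M:=c_{P,\xi}\|w^{-1}\|_\infty\|T^\omega g\|_{\cC^\gamma_T\cC^\kappa(w)}$ independent of its argument; moreover $\Theta$ inherits the additivity built into the sewing construction of Lemma~\ref{integral lemma}, namely $\Theta(y)_{s,t}=\Theta_s^t(y)_t+\Theta_0^s(y)_{s,t}$, which supplies the time-additivity required by Theorem~\ref{Global Existence}. That theorem then upgrades the local solution to a unique global one $\theta\in\scC^\varsig_T\cC^\beta$, that is, $\tau=T$.

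I expect the only genuine work to be in the second step: fitting the outputs of Lemma~\ref{integral lemma} and Proposition~\ref{prop:particularstability} into the precise abstract form~\eqref{eq:abs cond for ex uni} — in particular confirming that a strictly positive power of $\tau$ always survives (so that the contraction closes on a nontrivial interval) and that the prefactor $C$, which encodes the weight $w$ through $w^{-1}$, is increasing and locally bounded. Once that is secured, and given the uniform-in-$x$ bound on $C$ when $w\simeq1$ together with the additivity of $\Theta$, Theorems~\ref{thm: abs existence and uniqueness} and~\ref{Global Existence} do the rest.
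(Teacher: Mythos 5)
Your proposal is correct and follows essentially the same route as the paper's own proof: Proposition \ref{prop:regualrity of P as S xi} for the $\rho$--singularity of $S_t=P_t\xi$, Propositions \ref{prop:existence of particular integral} and \ref{prop:particularstability} to verify the two hypotheses \eqref{eq:abs cond for ex uni}, then Theorem \ref{thm: abs existence and uniqueness} for the local statement with the a priori bound extracted from $\theta\in\cB_\tau(P\psi)$, and Theorem \ref{Global Existence} for the global case when $w\simeq1$. The only differences are cosmetic: you spell out a few details the paper leaves implicit, such as the nonemptiness of the interval for $\varsig$ and the verification that $P\psi\in\scC^\varsig_T\cC^\beta$.
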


\begin{proof}
By Proposition \ref{prop:regualrity of P as S xi}, it follows that $S_t:=P_t\xi$ is a linear operator on $\cC^\beta$ since $\beta>\vartheta$. It then follows from Proposition \ref{prop:existence of particular integral} that the non-linear Young integral 
\begin{equation*}
    \Theta(y)_t:=\int_0^t S_{t-s} T^\omega_{\dd s}g(y_s),
\end{equation*}
exists as a map $\Theta:[0,T]\times \scC^\varsig_T\cC^\beta \rightarrow \scC^\varsig_T\cC^\beta$, and we have that for any $y\in \scC^\varsig_T\cC^\beta$ with $\gamma-\rho>\varsig>1-\gamma$ there exists a constant $C>0$ such that
\begin{equation}\label{eq:bound theta 4}
    [\Theta(y)]_\varsig \leq C \sup_{|z|\leq \|y\|_\infty} w^{-1}(z) \|T^\omega g\|_{\cC^\gamma_T\cC^\kappa(w)} (1+[y]_\varsig)T^{\gamma-\rho-\varsig}. 
\end{equation}
Moreover by Proposition \ref{prop:particularstability} we have that for two paths $x,y\in \scC^\varsig_T\cC^\beta$ there exists a constant $C=C(S,\xi)>0$ such that
\begin{equation}\label{eq:diff theta 4}
    [\Theta(x)-\Theta(y)]_\varsig \leq C \sup_{|z|\leq \|x\|_\infty\vee \|y\|_\infty} w^{-1}(z) \|T^\omega g\|_{\cC^\gamma_T\cC^\kappa(w)}\left(\|x_0-y_0\|_{\cC^\beta}+ [x-y]_\varsig\right) T^{\gamma-\rho-\varsig}.
\end{equation}
Thus by theorem \ref{thm: abs existence and uniqueness}, for any $\tau>0$ such that 
\begin{equation*}
    \tau\leq \left[4(1+[P\psi]_\varsig) C \sup_{|z|\leq 1+ \|\psi\|_{\cC^\beta}+ [P\psi]_\varsig} w^{-1}(z) \right]^{-\frac{1}{\gamma-\rho-\varsig}},
\end{equation*}
there exists a unique $\theta\in \scC^\varsig([0,\tau];\cC^\beta(\RR))$ for $1-\gamma<\varsig<\gamma-\rho$ which satisfies \eqref{eq:thetaeq particular}. Thus local existence and uniqueness holds for \eqref{eq:thetaeq particular}.  

We now move on to prove the bound in \eqref{eq:bound local solution particular} First observe that 
\begin{equation*}
    [\theta]_{\varsig;\tau} \leq [P\psi]_{\varsig,\tau} +[\Theta (\theta)]_{\varsig;\tau}.
\end{equation*}
From Lemma \ref{integral lemma}, we know that there exists a $C=C(P,\xi)>0$ such that
\begin{equation*}
[\Theta (\theta)]_{\varsig;\tau} \leq C\sup_{|z|\leq \|\theta\|_\infty}w^{-1}(z) \|T^\omega g\|_{\cC^\gamma_T\cC^\kappa} (1+[\theta]_{\varsig;\tau})\tau^{\gamma-(\rho+\varsig)}
\end{equation*}
Recall from Theorem \ref{thm: abs existence and uniqueness} that $\tau>0$ is chosen such that $\theta$ is contained in the unit ball centred at $S\psi$, i.e.  $\theta\in \cB_\tau(P\psi)$,  and thus $[\theta]_{\varsig;\tau}\leq [P\psi]_{\varsig;\tau}+1$, which yields  
\begin{equation*}
     [\theta]_{\varsig;\tau} \leq [P\psi]_{\varsig,\tau} +  2\sup_{|z|\leq 1+\|\psi\|_{\cC^\beta} +[P\psi]_{\varsig;\tau}}w^{-1}(z) \|T^\omega g\|_{\cC^\gamma_T\cC^\kappa}. 
\end{equation*}
At last, if $w\simeq 1$, then for any $x,y\in \scC^\varsig_T\cC^\beta$, there exists an $M>0$ such that 
$$
C \sup_{|z|\leq \|x\|_\infty\vee \|y\|_\infty} w^{-1}(z)\leq M.
$$
where $C$ is the largest of the constants in \eqref{eq:bound theta 4} and \eqref{eq:diff theta 4}.  By Theorem \ref{Global Existence} it follows that a global solution exists, i.e. there exists a unique $\Theta\in \scC^\varsig_T\cC^\beta$ satisfying \eqref{eq:thetaeq particular}.
\end{proof}

With the above theorem at hand, we are ready to prove Theorem \ref{thm:main existence and uniquness with existing avg op} and Corollary \ref{cor:main stability of solutions}. 

\begin{proof}[Proof of Theorem \ref{thm:main existence and uniquness with existing avg op}] 
From Theorem \ref{thm: existence and uniqunes theta}, we know that there exists a $\tau>0$ such that there exists a unique $\theta\in \scC^\varsig_\tau\cC^\beta$ which solves \eqref{eq:thetaeq particular}. Thus we say that there exists a unique local solution $u$ in the sense of Definition \ref{def: concept of solution}. As  proven in Theorem \ref{thm: existence and uniqunes theta}, there exists a global solution if $T^\omega g\in \cC^\gamma_T\cC^\kappa (w)$ with $w\simeq 1$.  This concludes the proof of theorem \ref{thm:main existence and uniquness with existing avg op}. 
\end{proof}

\begin{proof}[Proof of Corollary \ref{cor:main stability of solutions}]
Let $\{g_n\}$ be a sequence of smooth functions converging to $g$ such that $T^\omega g_n$ converges to $T^\omega g$ in $\cC^\gamma_T\cC^\kappa$, and define the field  $Y^n_{s,t}(x)=(t-s) g_n(x+\omega_s)$. It is readily checked that $Y^n$ satisfies {\rm (i)-(ii)} of Lemma \ref{integral lemma}, and thus $\Theta^{Y^n}$ is an integral operator from $\scC^\varsig_T\cC^\beta\rightarrow \scC^\varsig_T\cC^\beta$. Furthermore, it follows from Theorem \ref{thm: abs existence and uniqueness} together with the same reasoning as in the proof of Theorem \ref{thm: existence and uniqunes theta} that there exists a unique solution $\theta^n$ to the equation 
\begin{equation}\label{eq: theta n}
    \theta^n_t=P_t \psi +\Theta^{Y^n}(\theta)_t. 
\end{equation}
In particular, $\|\theta^n\|_{\scC^\varsig_\tau\cC^\beta}<\infty$ for all $n\in \NN$. 
Note that then $\Theta^{Y^n}$ corresponds to the classical Riemann integral on $\cC^\beta$. Indeed, since $g_n$ is smooth, the integral $\Theta^{Y^n}$ is given by 
\begin{equation*}
    \Theta^{Y^n}(y)_t:=\int_0^t S_{t-s}g(y_s)\dd s, 
\end{equation*}
where we recall that $S_t=P_t\xi$. Furthermore, it is readily checked that $\Theta^{Y^n}$ agrees with the integral operator $\Theta^{T^\omega g_n}$, since $g_n$ is smooth,  $T^\omega_{s,t}g_n(x)=\int_s^t g(x+\omega_r)\dd r$ is differentiable in time. Thus note that the difference between the two approximating integrals 
\begin{equation*}
    \begin{aligned}
         \Theta^{Y^n}_{\cP}(y)_t=\sum_{[u,v]\in \cP[0,t]} S_{t-u} g_n(y_u+\omega_u)(v-u)
         \\
         \Theta^{T^\omega g_n}_{\cP}(y)_t=\sum_{[u,v]\in \cP[0,t]} S_{t-u}\int_u^v g_n(y_u+\omega_r)\dd r
    \end{aligned}
\end{equation*}
 can be estimated by 
\begin{equation*}
    \left\| \sum_{[u,v]\in \cP} S_{t-u} \int_u^v g(y_u+\omega_u)-g(y_u+\omega_r)\dd r\right\|_{\cC^\beta}\rightarrow 0
\end{equation*}
when $|\cP|\rightarrow 0$. Therefore $\Theta^{Y^n}\equiv \Theta^{T^\omega g_n}$. 
Let $\theta\in \scC^\varsig_\tau\cC^\beta$ be the solution to \eqref{eq:thetaeq particular} constructed from $T^\omega g$ with integral operator $\Theta$. We now investigate the difference between $\theta$ and $\theta^n$ as found in \eqref{eq: theta n}. It is readily checked that 
\begin{equation*}
    [\theta^n-\theta]_\varsig \leq [\Theta(\theta)-\Theta^{T^\omega g_n}(\theta_n)]_\varsig +[\Theta^{T^\omega g_n}(\theta)-\Theta^{Y^n}]_\varsig. 
\end{equation*}
As already argued, the last term on the right hand side is equal to zero, and we are therefore left to show that the first term on the right hand side converge to zero. 
Invoking Proposition \ref{prop: stability of theta} (in particular \eqref{eq:stability bound}) and following along the lines of the proof of \ref{prop:particularstability}, it is readily checked that there exists a $C=C(P,\xi)>0$ such that
\begin{equation*}
    [\theta^n-\theta]_{\varsig;\tau} \leq C \sup_{|z|\leq \|\theta\|_\infty \vee \|\theta^n\|_\infty} w^{-1}(z) (\|T^{\omega}g_n -T^\omega g\|_{\cC^\gamma_T\cC^\kappa(w)}(1+[\theta]_{\varsig;\tau}\vee [\theta^n]_{\varsig;\tau})+[\theta-\theta^n]_{\varsig;\tau})\tau^{\gamma-\rho-\varsig}
\end{equation*}
We can now choose a parameter  $\bar{\tau}>0$ sufficiently small, such that 
\begin{equation*}
    [\theta^n-\theta]_{\varsig;\bar{\tau}} \leq 2 \sup_{|z|\leq \|\theta\|_\infty \vee \|\theta^n\|_\infty} w^{-1}(z) (\|T^{\omega}g_n -T^\omega g\|_{\cC^\gamma_T\cC^\kappa(w)}(1+[\theta]_{\varsig;\tau}\vee [\theta^n]_{\varsig;\tau}). 
\end{equation*}
From \eqref{eq:bound local solution particular} we know that there exists a constant $M>0$ such that 
\[\|\theta^n\|_\infty\vee \|\theta\|_\infty\vee [\theta^n]_{\varsig;\tau}\vee [\theta]_{\varsig;\tau}\leq M,\]
and it follows that 
\begin{equation}\label{eq:bound with M of diff n}
      [\theta^n-\theta]_{\varsig;\bar{\tau}} \leq 2 \sup_{|z|\leq M} w^{-1}(z) \|T^{\omega}g_n -T^\omega g\|_{\cC^\gamma_T\cC^\kappa(w)}(1+M). 
\end{equation}
The above inequality can, by similar arguments, be proven to hold for any sub-interval $[a,a+\bar{\tau}]\subset [0,\tau]$ and thus we conclude by \cite[Exc. 4.24]{Friz2014} that there exists a constant $C=C(P,\xi,M,\bar{\tau},w)>0$ such that
\begin{equation*}
     [\theta^n-\theta]_{\varsig;\tau} \leq C \|T^{\omega}g_n -T^\omega g\|_{\cC^\gamma_T\cC^\kappa(w)}. 
\end{equation*}
Taking the limit when $n\rightarrow \infty$ it follows that $\theta^n\rightarrow \theta$ due to the assumption that $T^{\omega}g_n\rightarrow T^{\omega}g$. 
If the solution is global, i.e. $w\simeq 1$, then the same arguments as above holds, and the inequality in \eqref{eq:bound with M of diff n} can be proven to hold on any sub-interval $[a,a+\bar{\tau}]\subset [0,T]$ and then by the same arguments as above $\theta^n\rightarrow \theta$ in $\scC^\varsig_T\cC^\beta$.
\end{proof}

\subsection{The drifted multiplicative SHE}\label{sec:drifted mshe}

So far we have proven existence and uniqueness of solutions to equations on the form 
\begin{equation*}
    u_t = P_t \psi +\int_0^t P_{t-s}\xi g(u_s)\dd s+\omega_t, \quad s\in [0,T],\,\,u_0=\psi\in \cC^\beta(\RR). 
\end{equation*}
That is, we have throughout the text assumed that $\xi\in \cC^{-\vartheta}$ with $\vartheta<\beta$ is a multiplicative spatial noise. More generally, it is frequently considered a drift term in the above equation as well, such that 
\begin{equation*}
      u_t = S_t \psi +\int_0^t P_{t-s} b(u_s)\dd s+ \int_0^t P_{t-s}\xi g(u_s)\dd s+\omega_t, \quad s\in [0,T],\,\,u_0=\psi\in \cC^\beta(\RR). 
\end{equation*}
Our results extends easily to this type of drifted equation as well, as long as $T^\omega b\in \cC^\gamma_T \cC^3(w)$. Indeed, again setting $\theta=u-\omega$, we consider the non-linear Young-Volterra equation 
\begin{equation}\label{eq:drifted mshe theta formulation}
    \theta_t =P_t \psi +\int_0^t P_{t-s} T^\omega_{\dd s}b(\theta_s) +\int_0^t P_{t-s}\xi T^\omega_{\dd s}g(\theta_s). 
\end{equation}
It is straightforward to construct the non-linear Young-Volterra integral of the form $\int_{0}^t P_{t-s}T^\omega_{\dd s} b(\theta_s)$. Since there is no multiplicative noise $\xi$, the linear operator $P_t$ on $\cC^\beta$ is not singular in its time argument. An application of Lemma \ref{integral lemma} would then allow to construct $y\mapsto \Theta^{b}(y):=\int_{0}^t P_{t-s}T^\omega_{\dd s} b(y_s)$ as an operator from $\scC^\varsig_T\cC^\beta $ into itself. The second integral is constructed as before as an operator from $\scC^\varsig_T\cC^\beta $ into itself; let us denote this by $\Theta^g$. 
Then \eqref{eq:drifted mshe theta formulation} can be written as the abstract equation 
\begin{equation*}
    \theta_t =P_t\psi + \Theta(\theta)_t,
\end{equation*}
where $\Theta(y):=\Theta^b(y)+\Theta^g(y)$ for $y\in\scC^\varsig_T\cC^\beta $. An application of Theorem \ref{thm: abs existence and uniqueness} then provides local existence and uniqueness. If both $T^\omega b$ and $T^\omega g$ are contained in some unweighted Besov spaces, global existence and uniqueness also holds, as proven in \ref{Global Existence}.

\section{Averaged fields with L\'evy noise}\label{sec: Averaging operators with Levy noise}
In this section we will construct the averaged field associated to a fractional Levy process, and prove it's regularizing properties. In contrast to the works of \cite{harang2020regularity}, we will not consider the regularity of the local time associated with a process in order to obtain the regularizing effect, but estimate the regularity of the averaged field $T^\omega b$ directly based on probabilistic techniques developed in \cite{Catellier2016}. This has the benefit that it improves the regularity of the averaged field, as discussed in the beginning of section \ref{averaged fields}. 

\subsection{Linear fractional Stable motion}

In this section we introduce a class of measurable processes which have the regularization property, and allows us to deal with our equations. 

\begin{defn}
Let $\alpha\in(0,2]$. We say that $L$ is a $d$-dimensional symmetric $\alpha$-stable L\'evy process if 
\begin{itemize}
    \item[{\rm (i)}] $L_0=0$,
    \item[{\rm (ii)}] $L$ has independent and  stationary increments,
    \item[{\rm (iii)}] $L$ is right c\'adl\'ag and
    \item[{\rm (iv)}] there exists a constant such that for all $\lambda \in \RR^d$ 
\[\EE[\exp(i L_t\cdot \xi)] = e^{-c_\alpha |\xi|^\alpha t}.\]
\end{itemize}  
\end{defn}

Following \cite{Taqqu1994}, we define a fractional process with respect to a stable L\'evy process as followed: 

\begin{defn}
Let $\alpha\in(0,2]$ and $L,\tilde{L}$ be two independent $d$-dimensional $\alpha$-stable symmetric L\'evy processes. Let $H\in (0,1)\backslash\{\alpha^{-1}\}$.
For all $t\in \RR_+$, we define the $\alpha$-linear fractional stable motion ($\alpha$-LFSM) of Hurst parameter $H$ by
\[L^H_t = \int_0^t (t-v)^{H-\frac{1}{\alpha}} \dd L_t + \int_0^{+\infty} (t+v)^{H-\frac1\alpha} - v^{H - \frac{1}{\alpha}} \dd \tilde{L}_{v}.\]
We extend this definition to $H=\frac{1}{\alpha}$ by setting $L^{\frac{1}{\alpha}} = L$.
\end{defn}

The existence of the LFSM is proved in \cite{Taqqu1994}, Chapter 3. Note that when $\alpha=2$, we recover the standard fractional Brownian motion (up to a multiplicative constant), and its different representations. Here we have chosen the moving average representation, in the fractional Brownian motion case, one could also use the harmonizable representation or something else. Note that when $\alpha<2$, those different representations are no longer equivalent. 

We gather here some properties about the LFSM which will become useful in the subsequent analysis of the regularity of averaged fields associated to LFSM. One can again consult \cite{Taqqu1994}, but also \cite{lifshitsSmallDeviationsFractional2005} for the first three points. The last point is a direct consequence of the definition of $L^H$.

\begin{prop}\label{prop:LFSM}
Let $\alpha \in (0,2]$ and let $H\in (0,1) \cup \{\alpha^{-1}\}$, and let $L^H$ be an $\alpha$-LFSM.
\begin{itemize}
    \item[{\rm (i)}] $L^H$ is almost surely measurable.
    \item[{\rm (ii)}] $L^H$ is continuous if and only if $\alpha =2$ or $H>\frac1\alpha$.
    \item[{\rm (iii)}] $L^H$ is $H$ self-similar and its increments are stationary.
    \item[{\rm (iv)}] For all $t\ge 0$ define  $\cF_t = \sigma\left(\{\tilde{L}_{(\tilde{ t}})_{\tilde{t} \in \RR_+}\} \cup \{L_s \, : \, s \leq t\}\right)$. Then $L^H$ is $(\cF_t)_{t\ge 0}$ adapted.
    \item[{\rm (v)}] For all $0 \leq s \leq r$, we have
    \[L^H_r = L^{1,H}_{s,r} + L^{2,H}_{s,r},\]
    with 
    \[L^{1,H}_{s,r} = \int_s^r (r-v)^{H - \frac{1}{\alpha}} \dd L_v\]
    and
    \[L^{2,H}_{s,r} = \int_0^s (r-v)^{H - \frac{1}{\alpha}} \dd L_v + \int_0^{+\infty} (r+v)^{H - \frac{1}{\alpha}} - v^{H-\frac{1}{\alpha}} \dd \tilde{L}_v.\]
    Hence, $L^{1,H}_{s,r}$ is independent of $\cF_r$ while $L^{2,H}_{s,r}$ is measurable with respect to $\cF_r$, and for all $x\in \RR^d$
    \[\EE[e^{i \xi\cdot L^{1,H}_{s,r}}] = e^{-c_\alpha |\xi|^\alpha (s-r)^{\alpha H} }.\]
\end{itemize}
\end{prop}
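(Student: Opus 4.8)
The plan is to treat the five items essentially independently: (i) and (ii) are standard sample--path statements for moving--average stable motions which I would take from the literature, while (iii), (iv), (v) are direct manipulations of the integral representation of $L^H$. For (i), observe that $L^H_t$ is the $\alpha$--stable integral of the pair of deterministic kernels $f_t(v)=(t-v)^{H-1/\alpha}\ind_{[0,t]}(v)$ and $g_t(v)=(t+v)^{H-1/\alpha}-v^{H-1/\alpha}$, both of which belong to $L^\alpha(\RR_+)$ (using $0<H<1$) and depend continuously on $t$ in the $L^\alpha$--norm, also at $t=0$ by the scaling $\|f_t\|^\alpha_{L^\alpha}=t^{\alpha H}/(\alpha H)$ and $\|g_t\|^\alpha_{L^\alpha}=t^{\alpha H}\|g_1\|^\alpha_{L^\alpha}$; hence $t\mapsto L^H_t$ is continuous in probability, and a stochastically continuous process on $\RR_+$ admits a jointly measurable modification, which is the version we call $L^H$ (see \cite{Taqqu1994}, Ch.~3 and~11). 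For (ii): when $\alpha=2$ the process is a constant multiple of fractional Brownian motion, so local Hölder continuity of any order $<H$ follows from Kolmogorov's criterion; when $\alpha<2$ the dichotomy at $H=\alpha^{-1}$ is the known path--regularity result for moving--average fractional stable motions — for $H>\alpha^{-1}$ a chaining argument adapted to stable integrals yields continuity, whereas for $H<\alpha^{-1}$ the kernel $(t-v)^{H-1/\alpha}$ is unbounded as $v\uparrow t$, so a jump of $L$ near, and below, a time $t$ forces a genuine discontinuity. I would cite \cite{Taqqu1994} and \cite{lifshitsSmallDeviationsFractional2005} here rather than reproduce these arguments.

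For (iii), self--similarity of the driving processes, $(L_{ct})_{t\ge0}\stackrel{d}{=}(c^{1/\alpha}L_t)_{t\ge0}$ and likewise for $\tilde L$, together with the substitution $v\mapsto cv$ in both integrals defining $L^H_{ct}$, gives $(L^H_{ct})_{t\ge0}\stackrel{d}{=}(c^HL^H_t)_{t\ge0}$ at the level of finite--dimensional distributions; stationarity of the increments follows in the same way from stationarity of the increments of $L$ and $\tilde L$, after the substitution that re--centres the moving--average kernels at the base point. Item (iv) is immediate from the representation: $\int_0^t(t-v)^{H-1/\alpha}\dd L_v$ is measurable with respect to $\sigma(L_u:u\le t)$, the second summand is measurable with respect to $\sigma(\tilde L)$, both of these $\sigma$--algebras are contained in $\cF_t$, and $t\mapsto\cF_t$ is increasing, so $L^H$ is $(\cF_t)$--adapted.

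Finally, for (v) I would split the first integral at time $s$:
\begin{equation*}
L^H_r
=\int_s^r(r-v)^{H-\frac1\alpha}\,\dd L_v
+\int_0^s(r-v)^{H-\frac1\alpha}\,\dd L_v
+\int_0^{+\infty}\!\big((r+v)^{H-\frac1\alpha}-v^{H-\frac1\alpha}\big)\,\dd\tilde L_v,
\end{equation*}
which is exactly $L^{1,H}_{s,r}+L^{2,H}_{s,r}$. The summand $L^{1,H}_{s,r}$ is a deterministic integral against the increments of $L$ over $(s,r]$, hence independent of $\cF_s$; while $L^{2,H}_{s,r}$ is built from $L|_{[0,s]}$ and $\tilde L$ alone, hence $\cF_s$--measurable, and a fortiori $\cF_r$--measurable. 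The characteristic function follows from the stable--integral identity $\EE[\exp(i\xi\cdot\int h\,\dd L)]=\exp(-c_\alpha|\xi|^\alpha\int_{\RR}|h(v)|^\alpha\,\dd v)$ applied with $h(v)=(r-v)^{H-1/\alpha}\ind_{[s,r]}(v)$, using $\int_s^r(r-v)^{\alpha H-1}\,\dd v=(\alpha H)^{-1}(r-s)^{\alpha H}$ and absorbing the constant $(\alpha H)^{-1}$ into $c_\alpha$.

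The only genuinely technical point in the whole statement is the discontinuity half of (ii) in the regime $\alpha<2$, $H<\alpha^{-1}$ — that is, verifying the paths really fail to be continuous (indeed are a.s. unbounded on every interval) rather than merely lacking an easy continuity proof — but this is precisely the content of the cited results on moving--average stable motions, so no new work is required; everything else is routine bookkeeping with the moving--average representation and the elementary properties of $\alpha$--stable integrals.
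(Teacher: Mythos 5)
Your proposal is correct and matches the paper's treatment: the paper likewise defers (i)--(iii) to Samorodnitsky--Taqqu and Lifshits--Simon and observes that the remaining items are direct consequences of the moving-average representation, which is exactly the bookkeeping you carry out. Note that your version in fact silently corrects two typos in the statement of (v) --- the independence is from $\cF_s$ (not $\cF_r$) and the exponent should read $(r-s)^{\alpha H}$ --- and your reading is the one actually used later in Lemma \ref{lem:average_blocks}.
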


Note that we can (and we will) reformulate the last point by saying that for all $g\in\cS$,
\[\EE [g(x + L^{1,H}_{s,r}) ]= P^{\frac{\alpha}{2}}_{c_\alpha(r-s)^{\alpha H}}g(x)\]

Finally, let us recall a useful result about martingales in Lebesgue spaces due to Pinelis \cite{pinelisOptimumBoundsDistributions1994} and adapted in weighted spaces (see Appendix \ref{sub:weighted_lebesgue}).  

\begin{prop}\label{prop:azuma}
Let $p\ge 2$, let $w$ be an admissible weight, and let $(M_n)_n$ be a $(\Omega,\cF,(\cF_n)_n,\PP)$ martingale with value in $L^p(\RR^d,w;\RR)$. Suppose that there exists a constant $c>0$ such that for all $n\ge 0$, $\|M_{n+1} - M_n\|_{L^p(w)} \leq c$. Then there exists two constants $C_1>0$ and $C_2>0$ such that for all $x\ge 0$,
\[\PP\big(\|M_N - \EE[M_N]\|_{L^P(w)} \ge x \big) \leq C_1 e^{-\frac{C_2 x^2}{Nc^2}}. \]
\end{prop}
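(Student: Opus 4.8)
The plan is to reduce this weighted-$L^p$ statement to the scalar Azuma--Hoeffding-type inequality for Banach-space-valued martingales established by Pinelis. First I would recall that Pinelis's result applies to martingales with values in any $(2,D)$-smooth Banach space: if $(M_n)_n$ is a martingale in such a space $B$ with $\|M_{n+1}-M_n\|_B \le c$ almost surely for all $n$, then $\PP(\|M_N - M_0\|_B \ge x) \le 2\exp(-x^2/(2D^2 N c^2))$, and more generally one may center at $\EE[M_N]$ rather than $M_0$ at the cost of adjusting constants. The key point is therefore to verify that the weighted Lebesgue space $L^p(\RR^d,w;\RR)$ with $p \ge 2$ is $(2,D)$-smooth with a smoothness constant $D$ depending only on $p$ (and not on the admissible weight $w$). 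This is where the admissibility hypothesis on $w$ enters: an admissible weight is (sub)multiplicative and of at most polynomial growth, so $L^p(\RR^d,w)$ is isometrically isomorphic to $L^p$ of the measure $w(x)\,\dd x$ on $\RR^d$, and $L^p$ of any $\sigma$-finite measure is uniformly $2$-smooth for $p\ge 2$ with the classical constant $\sqrt{p-1}$ (equivalently, it satisfies Hanner/Clarkson-type inequalities). Thus one can take $D = D(p)$ independent of $w$ and of $N$.

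The second step is bookkeeping with the centering and the constants. Writing $\bar M_n := M_n - \EE[M_N \mid \cF_n]$ does not immediately give a martingale with the right increments, so instead I would simply apply Pinelis's inequality to the martingale $(M_n - M_0)_n$, obtaining $\PP(\|M_N - M_0\|_{L^p(w)} \ge x) \le 2 \exp(-C_2' x^2/(Nc^2))$ for a constant $C_2' = C_2'(p)$, and then note that $\EE[M_N] = \EE[M_0] = M_0$ when $M_0$ is deterministic; in the general case one absorbs $\|M_0 - \EE[M_0]\|_{L^p(w)}$, which is itself controlled by the martingale difference bound summed over the first step, into a redefinition of $C_1$. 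Concretely, by the triangle inequality $\|M_N - \EE[M_N]\|_{L^p(w)} \le \|M_N - M_0\|_{L^p(w)} + \|M_0 - \EE[M_0]\|_{L^p(w)}$, and the martingale property at $n=0$ gives $\|M_0 - \EE[M_0]\|_{L^p(w)} \le c$; splitting the event $\{\|M_N - \EE[M_N]\|\ge x\}$ at the threshold $x/2$ versus $x \le 2c$ and enlarging $C_1$ to handle the bounded range yields the stated form with two constants $C_1, C_2 > 0$.

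The main obstacle is the first step: confirming that $L^p(\RR^d,w;\RR)$ falls within the scope of Pinelis's theorem with a uniform smoothness constant. The cleanest route is to observe that for a nonnegative measurable weight the map $f \mapsto f$ is an isometry from $L^p(w\,\dd x)$ (the Lebesgue space of the measure $w(x)\dd x$) to $L^p(\RR^d,w)$ as defined in Appendix \ref{sub:weighted_lebesgue}, so the geometry is that of an ordinary $L^p$-space over a (possibly infinite, possibly atomless) measure space, for which uniform $2$-smoothness with constant depending only on $p$ is classical. If the appendix's definition of $\|\cdot\|_{L^p(w)}$ is instead $\|wf\|_{L^p}$ one composes with the bijection $f\mapsto wf$; either way admissibility of $w$ (in particular $w>0$ a.e.) makes this identification legitimate. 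Once this is in place, the rest is the routine constant-chasing sketched above, and I would not expect any further difficulty.
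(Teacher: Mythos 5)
Your route is exactly the one the paper intends: the paper offers no proof of Proposition \ref{prop:azuma} at all, merely citing Pinelis \cite{pinelisOptimumBoundsDistributions1994} as ``adapted in weighted spaces'', and your reduction --- identify $L^p(\RR^d,w)$ isometrically with an unweighted $L^p$ space via $f\mapsto wf$ (legitimate since an admissible weight is smooth and strictly positive), invoke $(2,\sqrt{p-1})$-smoothness of $L^p$ for $p\ge 2$ so that the smoothness constant depends only on $p$ and not on $w$ or $N$, and apply Pinelis's exponential inequality to the martingale $(M_n-M_0)_n$ --- is the correct way to fill in that citation. The constant-chasing at the end (splitting at $x\le 2c$ versus $x>2c$ and enlarging $C_1$) is also fine.

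The one step that does not follow from the stated hypotheses is the claim that ``the martingale property at $n=0$ gives $\|M_0-\EE[M_0]\|_{L^p(w)}\le c$''. The assumption $\|M_{n+1}-M_n\|_{L^p(w)}\le c$ for $n\ge 0$ says nothing about $M_0-\EE[M_0]$: taking $M_n\equiv Z$ for an unbounded $\cF_0$-measurable $Z$ gives increments equal to zero while $\|M_0-\EE[M_0]\|_{L^p(w)}$ can be arbitrarily large with positive probability, so your step --- and, strictly read, the proposition itself --- fails in that generality. The statement is only correct under the implicit convention that $M_0$ is deterministic (equivalently $\cF_0$ trivial), in which case $\EE[M_N]=M_0$ and your application of Pinelis to $(M_n-M_0)_n$ already finishes the proof with no centering correction needed; otherwise one must supply a separate bound on $\|M_0-\EE[M_0]\|_{L^p(w)}$, which is what the paper effectively does in its only application (Lemma \ref{lem:average_blocks}), where $M_0=A_0$ is estimated by the same quantity that bounds the increments. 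So: right approach, one unjustified sentence that should be replaced either by the extra hypothesis on $\cF_0$ or by that extra estimate.
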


\subsection{Averaging operator}

In this section we give another proof of the regularizing effect of the fractional Brownian motion. This proof is similar to the one in \cite{Catellier2016}, but provides bounds in general weighted Besov spaces directly.  For other proofs of the same kind of results, one can consult \cite{coutinItoTanakaTrickNonsemimartingale2019}, \cite{galeati2020noiseless} and \cite{leStochasticSewingLemma2020}. Other proofs may rely on Burkolder-Davis-Gundy inquality in UMD spaces. For information about UMD spaces, one can consult \cite{hytonenAnalysisBanachSpaces2016}.

\begin{lem}\label{lem:average_blocks}
    Let $w$ be an admissible weight. Let $\alpha \in (0,2]$ and let $H \in (0,1)\cup \{\alpha^{-1}\}$. Let $\nu\in(0,1)$. Then there exists three constants $K,C_1,C_2>0$ such that for all  $j \ge -1$, all $g\in \cS'(\RR^d;\RR)$ all $s\leq t$ and all $x > K$
\[\PP\left( \frac{\|\int_s^t \Delta_j g(\cdot + L^H_r)\dd r\|_{L^p(w)}}{|t-s|^{1-\frac\nu2}2^{-j\frac{\nu}{2H}}\|\Delta_j g\|_{L^p(w)}}\ge x\right) \leq C_1 e^{-C_2 x^2}.\]
\end{lem}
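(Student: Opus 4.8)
The plan is to follow the strategy of \cite{Catellier2016}: freeze the process at dyadic times, split each increment of the averaged block into a "conditionally Gaussian-like" part (coming from $L^{1,H}$) and a remainder controlled by self-similarity, and then feed the resulting martingale-type bound into the concentration inequality of Proposition \ref{prop:azuma}. Fix $j\ge-1$ and write $\phi := \Delta_j g$. Introduce the discrete grid $t_k^n = s + k(t-s)2^{-n}$ and set $A^n_r := \int_{t_k^n}^{r}\phi(\cdot+L^H_v)\dd v$ along the grid, so that the full integral $\int_s^t\phi(\cdot+L^H_r)\dd r$ is the telescoping sum of the increments $\xi_k^n := \int_{t_k^n}^{t_{k+1}^n}\phi(\cdot+L^H_r)\dd r$. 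The key is to estimate $\EE[\xi_k^n\mid \cF_{t_k^n}]$ and the conditional $L^p(w)$-fluctuation of $\xi_k^n$ around it.

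First I would use Proposition \ref{prop:LFSM}(v): on $[t_k^n,t_{k+1}^n]$ write $L^H_r = L^{1,H}_{t_k^n,r} + L^{2,H}_{t_k^n,r}$, where $L^{2,H}_{t_k^n,r}$ is $\cF_{t_k^n}$-measurable and $L^{1,H}_{t_k^n,r}$ is independent of $\cF_{t_k^n}$ with $\EE[e^{i\xi\cdot L^{1,H}_{t_k^n,r}}] = e^{-c_\alpha|\xi|^\alpha (r-t_k^n)^{\alpha H}}$. Hence, using the reformulation $\EE[\phi(x+L^{1,H}_{t_k^n,r})] = P^{\alpha/2}_{c_\alpha (r-t_k^n)^{\alpha H}}\phi(x)$, the conditional expectation of the increment is, up to a translation by the $\cF_{t_k^n}$-measurable quantity $L^{2,H}_{t_k^n,\cdot}$ and $L^H_{t_k^n}$,
\[
\EE\big[\xi_k^n \mid \cF_{t_k^n}\big] = \int_{t_k^n}^{t_{k+1}^n} P^{\alpha/2}_{c_\alpha (r-t_k^n)^{\alpha H}}\phi(\cdot + \text{(meas.)})\dd r .
\]
Using the heat-kernel smoothing estimate on a single Littlewood--Paley block (the fractional heat semigroup contracts $\|P^{\alpha/2}_\tau \Delta_j g\|_{L^p(w)}$ by a factor $\simeq e^{-c 2^{j\alpha}\tau}$, cf. Appendix \ref{sub:heat_kernel}, together with translation-invariance of $\|\cdot\|_{L^p(w)}$ up to the admissible-weight factor), one gets
\[
\big\|\EE[\xi_k^n\mid\cF_{t_k^n}]\big\|_{L^p(w)} \lesssim \|\Delta_j g\|_{L^p(w)} \int_{0}^{(t-s)2^{-n}} e^{-c 2^{j\alpha} c_\alpha r^{\alpha H}}\dd r \lesssim \|\Delta_j g\|_{L^p(w)}\, \big(2^{-n}(t-s)\big)\wedge 2^{-j/H}.
\]
Interpolating the two regimes with exponent $\nu/2$ gives a bound of order $\|\Delta_j g\|_{L^p(w)}\,\big((t-s)2^{-n}\big)^{1-\nu/(2)}2^{-j\nu/(2H)}$ per block... more precisely one wants the per-step bound $\lesssim \|\Delta_j g\|_{L^p(w)}\big((t-s)2^{-n}\big)^{1}{}^{-}{}^{\nu/2}2^{-j\nu/(2H)}$ so that summing $2^n$ of them and then iterating over $n$ (a standard Garsia--Rodemich--Rumsey / dyadic chaining argument, exactly as in \cite{Catellier2016,galeati2020noiseless}) produces the claimed scaling $|t-s|^{1-\nu/2}2^{-j\nu/(2H)}\|\Delta_j g\|_{L^p(w)}$ for the full averaged block, with the martingale-difference structure of $\xi_k^n - \EE[\xi_k^n\mid\cF_{t_k^n}]$ controlled in the same way.

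Then, having shown that the (suitably rescaled) partial sums form an $L^p(w)$-valued martingale with uniformly bounded increments, I would apply Proposition \ref{prop:azuma}: with $c$ of the order of the per-step bound and $N = 2^n$ steps, the concentration estimate $\PP(\|M_N - \EE M_N\|_{L^p(w)}\ge x)\le C_1 e^{-C_2 x^2/(Nc^2)}$ is exactly of the form needed, since $Nc^2$ absorbs into the normalization. Summing the dyadic-level contributions, using $\sum_n C_1 e^{-C_2 x^2 4^{\delta n}} \le C_1' e^{-C_2' x^2}$ for $x$ large (the reason for the threshold $x > K$), yields the stated sub-Gaussian tail. The main obstacle I anticipate is bookkeeping the translations by the $\cF_{t_k^n}$-measurable displacements through the weighted norm $\|\cdot\|_{L^p(w)}$: unlike the unweighted case, $\|f(\cdot + a)\|_{L^p(w)}$ is not equal to $\|f\|_{L^p(w)}$, so one must use the admissibility of $w$ (sub-multiplicativity / polynomial growth, as in Appendix \ref{sub:weighted_lebesgue}) to get $\|f(\cdot+a)\|_{L^p(w)} \lesssim \llrr^{N}\!(a)\,\|f\|_{L^p(w)}$ and then control $\llrr(L^H_r)$ uniformly; this is where the admissible-weight hypothesis is genuinely used and where care is needed to keep constants independent of $j$, $s$, $t$.
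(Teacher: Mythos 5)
Your toolkit is the right one and matches the paper's: the decomposition $L^H_r = L^{1,H}_{t_k,r}+L^{2,H}_{t_k,r}$ from Proposition \ref{prop:LFSM}(v), the identification $\EE[\Delta_j g(x+L^{1,H}_{t_k,r})]=P^{\alpha/2}_{c_\alpha(r-t_k)^{\alpha H}}\Delta_j g(x)$, the exponential decay of the fractional heat semigroup on a single Paley--Littlewood block giving the two regimes $(t-s)/N$ versus $2^{-j/H}$, interpolation, and Pinelis's inequality (Proposition \ref{prop:azuma}). The organization, however, differs from the paper in a way that opens a genuine gap. The paper does \emph{not} chain over dyadic scales: it fixes a single uniform grid $t_n=s+\frac{n}{N}(t-s)$, forms the Doob martingale $M_n=\EE[\int_s^t\Delta_j g(\cdot+L^H_r)\dd r\mid\cF_{t_n}]$ of the \emph{full} integral, applies Pinelis once, and only at the very end optimizes over $N$. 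The crucial consequence is that the increment $M_{n+1}-M_n=\int_{t_n}^{t_{n+1}}\Delta_j g(\cdot+L^H_r)\dd r+A_{n+1}-A_n$ already contains the differences of the predicted remainders $A_n=\EE[\int_{t_n}^t\cdots\mid\cF_{t_n}]$, so these ``drift'' contributions benefit from the $\sqrt{N}$ cancellation in Pinelis; the only deterministic term left over is the single quantity $\EE[M_N]=\EE[A_0]$, bounded by $\min(t-s,\,C2^{-j/H})\|\Delta_j g\|_{L^p(w)}\le \tfrac K2|t-s|^{1-\nu/2}2^{-j\nu/(2H)}\|\Delta_j g\|_{L^p(w)}$ --- and this single term is precisely what forces the threshold $x>K$ (not, as you suggest, the summation over dyadic levels).

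In your scheme, by contrast, you telescope the integral into the per-block pieces $\xi^n_k$ and propose to control the drift $\sum_{k<2^n}\EE[\xi^n_k\mid\cF_{t^n_k}]$ by summing the per-block bound $\min\bigl(2^{-n}(t-s),\,2^{-j/H}\bigr)\|\Delta_j g\|_{L^p(w)}\lesssim\bigl((t-s)2^{-n}\bigr)^{1-\nu/2}2^{-j\nu/(2H)}\|\Delta_j g\|_{L^p(w)}$ over the $2^n$ blocks. That sum equals $2^{n\nu/2}(t-s)^{1-\nu/2}2^{-j\nu/(2H)}\|\Delta_j g\|_{L^p(w)}$, which grows with $n$; since the martingale fluctuations only become controllable once $n\approx\log_2\bigl((t-s)2^{j/H}\bigr)$, this loses a factor $\bigl((t-s)2^{j/H}\bigr)^{\nu/2}$ and the claimed normalization is not reached. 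To repair it you would either have to adopt the paper's Doob-martingale formulation (so that the remainders cancel telescopically and only $\EE[A_0]$ survives), or genuinely run a stochastic-sewing argument in the style of \cite{Catellier2016,leStochasticSewingLemma2020} where the quantity controlled is $\EE[\delta A_{u,m,v}\mid\cF_u]$ rather than the absolute sum of conditional expectations; your write-up gestures at the latter but the step ``summing $2^n$ of them \ldots produces the claimed scaling'' is, as arithmetic, false. Your final remark about translations in the weighted norm ($\|f(\cdot+a)\|_{L^p(w)}\neq\|f\|_{L^p(w)}$, with a cost $\la a\ra^{\lambda'}$ for the random, unbounded shift $L^{2,H}_{t_n,r}$) is a legitimate subtlety and is in fact passed over silently in the paper's own bound on $\|A_n\|_{L^p(w)}$, so flagging it is to your credit, but it remains unresolved in your plan as well.
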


\begin{proof}
Let $0 \leq s\leq t $ and let $N\in \NN$ to be fixed from now. Let us define $t_n = \frac{n}{N}(t-s) + s$, and $\cG_n = \cF_{t_n}$. Let us define 
\[M_n(x) = \EE\left[\int_s^t \Delta_j g (x + L^H_r)\dd r\bigg|\cF_{t_n}\right].\]
Hence, $(M_n)_n$ is a martingale with respect to $(\cG_n)_n$ and with value in $L^p(\RR^d,w;\RR)$ for all $p\ge 2$, and furthermore. Note also that 
\[M_N = \int_s^t \Delta_j g(\cdot + L^H_r)\dd r .\]
Furthermore, we have for all $0\leq n\leq N-1$
\[M_{n+1} - M_n = \int_{t_n}^{t_{n+1}} \Delta_j g(\cdot + L^H_r) \dd r + A_{n+1} - A_n,\]
with
\[A_n = \EE \left[\int_{t_n}^t \Delta_j g(\cdot + L^H_r)\dd r \bigg| \cF_{t_n}\right].\]
It is also readily checked that for all $n=0,\ldots,N$ we have 
\[\left\|\int_{t_n}^{t_{n+1}} \Delta_j g(\cdot + L^H_r) \dd r\right\|_{L^p(w)} \leq \frac{|t-s|}{N} \|\Delta_j g\|_{L^p(w)}.\]
Furthermore, using the decomposition of Proposition \ref{prop:LFSM}, one get for all $n\in \{0,\cdots,N\}$,
\begin{align*}
    A_n(x) 
    =& 
    \int_{t_n}^t \EE\left[\Delta_j g \left(x+L^{1,H}_{t_n,r}+L^{2,H}_{t_n,r}\right)\bigg|\cF_{t_n}\right] \dd r \\
    =& \int_{t_n}^{t} P^{\frac{\alpha}{2}}_{(r-t_n)^{\alpha H}} \Delta_j g (x+ L^{2,H}_{t_n,r})\dd r,
\end{align*}
Hence, thanks to Proposition \ref{prop:heat_kernel_estimates}, the following bound holds:
\[\|A_n\|_{L^p(w)} \lesssim \int_{t_n}^t e^{-c 2^{\alpha j} (r-t_n)^{\alpha H}} \|\Delta_j g\|_{L^p(w)} \lesssim 2^{-\frac{j}{H}} \|\Delta_j g\|_{L^p(w)} \int_0^{+\infty} e^{-c r^{\alpha H}}\dd r.\]
Finally, we have the bound 
\begin{equation}\|M_{n+1} - M_n\|_{L^p(w)} \lesssim \left(\frac{t-s}{N} + 2^{-\frac{j}{H}}\right)\|\Delta_j g\|_{L^p(w)}.\end{equation}
Note also that we have the straightforward bound $\|M_{n+1}- M_n\|_{L^p(w)} \lesssim (t-s)\|\Delta_j g\|_{L^p(w)}$. Therefore, by interpolation, this gives
\begin{equation}\label{eq:bound_BDG}
    \|M_{n+1} - M_n\|_{L^p(w)} \lesssim (t-s)^{1-\nu} \left(\frac{t-s}{N} + 2^{-\frac{j}{H}}\right)^\nu\|\Delta_j g\|_{L^p(w)}.
\end{equation}
It is readily checked that  
\begin{multline*}
    \PP\left(\left\|M_N \right\|_{L^p(w)} \ge x |t-s|^{1-\frac\nu2} 2^{-j\frac{\nu}{2H}} \|\Delta_j g\|_{L^p(w)}\right) \\
     \leq 
     \PP\left(\left\|M_N - \EE[M_N]\right\|_{L^p(w)} \ge \frac{x}{2} |t-s|^{1-\frac\nu2} 2^{-j\frac{\nu}{2H}} \|\Delta_j g\|_{L^p(w)}\right)
     \\
     +\PP\left(\left\|\EE[M_N]\right\|_{L^p(w)} \ge \frac{x}{2} |t-s|^{1-\frac\nu2} 2^{-j\frac{\nu}{2H}} \|g\|_{L^p(w)}\right).
\end{multline*}
Furthermore, note that 
$\EE[M_N]= \EE[A_0]$, and $\|A_0\|_{L^p(w)} \leq (t-s)\| \Delta_j g\|_{L^p(w)}$, where $\|A_0\|_{L^p(w)} \lesssim 2^{-\frac{j}{H}} \|\Delta_j  g\|_{L^p(w)}$.
Hence, there  exists a constant $K>0$ such that
\[\|\EE[M_N]\|_{L^p(w)} \leq \frac{K}{2} |t-s|^{1-\frac\nu2}2^{-j\frac{\nu}{2H}}\|\Delta_j g\|_{L^p(w)}.\]
Therefore, for $x>K$,
\[\PP\left(\left\|\EE[M_N]\right\|_{L^p(w)} \ge \frac{x}{2} |t-s|^{1-\frac\nu2} 2^{-j\frac{\nu}{2H}} \|\Delta_j g\|_{L^p(w)}\right) = 0.\]
Finally, this gives us, thanks to Proposition \ref{prop:azuma}, there exists a constant $\tilde{C}_2$ such that
\begin{multline*}
    \PP\left(\left\|\int_s^t \Delta_j g(\cdot + L^H_r)\dd r \right\|_{L^p(w)} \ge x |t-s|^{1-\frac\nu2} 2^{-j\frac{\nu}{2H}} \|\Delta_j g\|_{L^p(w)}\right) 
    \\
    \lesssim \exp\left(-\tilde{C}_2 \frac{x^2|t-s|^{2-\nu} 2^{-j\frac{\nu}{H}}\|\Delta_j g\|^2_{L^p(w)}}{N\left((t-s)^{1-\nu}\left(\frac{t-s}{N} + 2^{-\frac{j}{H}}\right)^\nu\right)^2\|\Delta_j g\|^2_{L^p(w)}}\right).
    \end{multline*}
When optimizing in $N$, one obtains the desired result.
\end{proof}

\begin{rem}
Thanks to the bound \eqref{eq:bound_BDG} and to UMD space standard argument (see for example \cite{hytonenAnalysisBanachSpaces2016}, one could rather use the Burkoldher-Davis-Gundy inequality, and get \emph{for all $1<p<+\infty$}, and all $m\ge2$
\[\EE\left[\left\|\int_s^t \Delta_j g(\cdot + L^H_r)\dd r\right\|^m_{L^p(w)}\right] \lesssim_m |t-s|^{\frac{m}{2}} 2^{-\frac{m}{2H}j}\|\Delta_j g\|^m_{L^p(w)}.\]
This would allow for a more standard proof of regularizing effect  of LFSM (using Kolmogorov continuity theorem), but prevents to have Gaussian tails. 
\end{rem}

\begin{lem}\label{lem:moments}
Let $X$ be a non-negative random variable such that there exists $K,C_1,C_2>0$ such that for all $x>K$,
\[\PP(X \ge x) \leq C_1 e^{-C_2 x^2}.\]
Then for all $m \ge 1$
\[\EE[X^{2m}] \leq K^{2m} + \frac{C_1}{2} \frac{m!}{C_2^m}.\]
\end{lem}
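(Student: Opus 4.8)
The plan is to combine the elementary \emph{layer-cake} representation of moments with the Gaussian tail hypothesis. Since $X \ge 0$, write
$$\EE[X^{2m}] = \int_0^{+\infty} \PP\big(X^{2m} > t\big)\, \dd t ,$$
and perform the change of variables $t = x^{2m}$, which gives $\dd t = 2m\, x^{2m-1}\dd x$ and $\PP(X^{2m} > x^{2m}) = \PP(X > x) \le \PP(X \ge x)$, so that
$$\EE[X^{2m}] \le 2m\int_0^{+\infty} x^{2m-1}\,\PP(X \ge x)\, \dd x .$$
I would then split this integral at the threshold $x = K$ and treat the two ranges separately.

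On $[0,K]$ use the trivial bound $\PP(X \ge x)\le 1$, which yields $2m\int_0^K x^{2m-1}\dd x = K^{2m}$. On $[K,+\infty)$ insert the hypothesis $\PP(X\ge x) \le C_1 e^{-C_2 x^2}$ and enlarge the domain back to $[0,+\infty)$, obtaining
$$2m\int_K^{+\infty} x^{2m-1}\,\PP(X\ge x)\,\dd x \;\le\; C_1\, 2m\int_0^{+\infty} x^{2m-1} e^{-C_2 x^2}\,\dd x .$$
The last integral is a standard Gaussian-type moment: the substitution $u = C_2 x^2$ turns $2m\int_0^{+\infty} x^{2m-1}e^{-C_2 x^2}\dd x$ into $\frac{m}{C_2^m}\int_0^{+\infty} u^{m-1}e^{-u}\dd u = \frac{m}{C_2^m}\,\Gamma(m) = \frac{m!}{C_2^m}$. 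Adding the two contributions gives a bound of the form $K^{2m} + C_1\, m!/C_2^m$, which is the claimed estimate (keeping the integral over $[K,+\infty)$ rather than replacing it by the one over $[0,+\infty)$ only sharpens the constant in front of $m!/C_2^m$).

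There is essentially no genuine obstacle here: once the tail-integral identity and the two elementary substitutions are in place the estimate follows in a line. The only points deserving minor attention are the bookkeeping of the factors $2m$ versus $m$ under the change of variables $u = C_2 x^2$, and --- if one insists on the precise constant $\frac12$ stated --- a slightly more careful treatment of the incomplete Gamma integral $\int_{C_2 K^2}^{+\infty} u^{m-1}e^{-u}\dd u$ instead of its crude replacement by $\Gamma(m)$.
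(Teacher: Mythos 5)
Your proof is correct and follows the same route as the paper's: the layer-cake formula $\EE[X^{2m}]=\int_0^\infty 2m\,x^{2m-1}\PP(X\ge x)\dd x$, a split at $x=K$ with the trivial bound $\PP(X\ge x)\le 1$ below $K$ and the Gaussian tail above, and the substitution $u=C_2x^2$ reducing the remaining integral to $\tfrac{m}{C_2^m}\Gamma(m)=\tfrac{m!}{C_2^m}$. Your bookkeeping is in fact more careful than the paper's (whose displayed chain of equalities drops the factor $2m\,x^{2m-1}$ after the first line), and your observation that the crude bound yields $C_1\,m!/C_2^m$ rather than $\tfrac{C_1}{2}\,m!/C_2^m$ is accurate --- the factor $\tfrac12$ is not obtainable in general from the stated hypotheses, but it is immaterial for the only use of the lemma, namely summing $\sum_m C^m\EE[X^{2m}]/m!$ for $C<C_2$ in Theorem~\ref{thm:averaging field}.
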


\begin{proof}
A simple computation reveals that 
\begin{align*}
    \EE[X^{2m}] & = \int_0^{K} 2m X^{2m-1} \PP(X \ge x) \dd x + \int_K^{+\infty} 2m X^{2m-1} \PP(X\ge x) \dd x \\
    & \leq  K^{2m} + C_1\int_0^{+\infty} e^{-C_2 x^2} \dd x\\
    & = K^{2m} + C_1 C_2^{-m}\int_0^{+\infty} e^{-x^2} \dd x\\
    &= K^{2m}+\frac{C_1}{2} \frac{m!}{C_2^m}.
\end{align*}
\end{proof}

Finally, we have all the tools to prove the following theorem of regularity of the averaging operator with respect the LFSM. We will use a version of the Garsia-Rodemich-Rumsey inequality from Friz and Victoir (\cite{Friz2009} -Theorem 1.1 p. 571).

\begin{thm}\label{thm:averaging field}
Let $w$ be an admissible weight. Let $\alpha \in(0,2]$. Let $H\in(0,1)\cup\{\alpha^{-1}\}$. Let $\kappa \in \RR$ and $\nu\in[0,1]$. Let $2\leq p \leq \infty$ and let $1 \leq q\leq +\infty$. 
There exists a constant $C>0$ such that for any $g\in B^{\kappa}_{p,q}(w)$ a positive random variable $F$ exists with
\[\EE[e^{C F^2}] <+\infty\],
and such that for any $0 \leq s \leq t \leq T$
\[\|T^{L^H}_{s,t} g\|_{B^{\kappa+ \frac{\nu}{2H}}_{p,q}(w)} \leq F |t-s|^{1-\frac{\nu}{2} - \frac{\eps}{2}} \|g\|_{B^\kappa_{p,q}(w)} \quad \text{if} \quad q < + \infty\]
and
\[\|T^{L^H}_{s,t} g\|_{B^{\kappa + \frac{\nu}{2H} - \delta}_{p,\infty}(w)} \leq F |t-s|^{1-\frac{\nu}{2} - \frac{\eps}{2}} \|g\|_{B^\kappa_{p,\infty}(w)}.\]
\end{thm}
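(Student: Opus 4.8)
The plan is to combine the Paley-Littlewood characterization of Besov norms with the block-wise tail estimate from Lemma \ref{lem:average_blocks}, the moment bound of Lemma \ref{lem:moments}, and a Garsia-Rodemich-Rumsey argument in time. First I would fix $\nu\in[0,1]$ and $\eps>0$ small, and for each dyadic block index $j\ge-1$ and each pair $s\le t$ introduce the normalized random variable
\[
X_{j,s,t} := \frac{\|\int_s^t \Delta_j g(\cdot+L^H_r)\,\dd r\|_{L^p(w)}}{|t-s|^{1-\frac{\nu}{2}}\,2^{-j\frac{\nu}{2H}}\,\|\Delta_j g\|_{L^p(w)}},
\]
which by Lemma \ref{lem:average_blocks} satisfies $\PP(X_{j,s,t}\ge x)\le C_1 e^{-C_2 x^2}$ for $x>K$, with constants uniform in $j,s,t$. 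Recalling (from the earlier proposition) that $\Delta_j T^{L^H}_{s,t} g = T^{L^H}_{s,t}(\Delta_j g) = \int_s^t \Delta_j g(\cdot+L^H_r)\,\dd r$, the $B^{\kappa+\frac{\nu}{2H}}_{p,q}(w)$-norm of $T^{L^H}_{s,t} g$ is, up to the usual $\ell^q$ sum over $j$, controlled by $|t-s|^{1-\frac{\nu}{2}}\,X_{j,s,t}\,2^{j\kappa}\|\Delta_j g\|_{L^p(w)}$.

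Next I would build a single random variable bounding all scales and all time increments simultaneously, using GRR in the time variable. Concretely, for a fixed $j$ set $\Psi_j(t):=\int_0^t \Delta_j g(\cdot+L^H_r)\,\dd r$, which is Lipschitz in time, and apply the Garsia-Rodemich-Rumsey inequality (in the Friz-Victoir form, \cite{Friz2009}) with gauge $\psi(u)=u^m$ for a large even integer $m=2k$ and with the natural Hölder-type scaling exponent $1-\frac{\nu}{2}-\frac{\eps}{2}$, to obtain
\[
[\Psi_j]_{\,1-\frac{\nu}{2}-\frac{\eps}{2};\,L^p(w)}^{2k}
\ \lesssim\ \int_0^T\!\!\int_0^T \frac{\|\Psi_j(t)-\Psi_j(s)\|_{L^p(w)}^{2k}}{|t-s|^{2k(1-\frac{\nu}{2}-\frac{\eps}{2})+2}}\,\dd s\,\dd t .
\]
Taking expectations and inserting Lemma \ref{lem:average_blocks} together with Lemma \ref{lem:moments} (which gives $\EE[X_{j,s,t}^{2k}]\le K^{2k}+\tfrac{C_1}{2}\,k!\,C_2^{-k}$, uniformly in $j,s,t$) bounds the double integral by $C^{2k}\,k!\,2^{-2kj\frac{\nu}{2H}}\,T^{k\eps}\,\|\Delta_j g\|_{L^p(w)}^{2k}$, the $T^{k\eps}$ factor coming from integrating $|t-s|^{k\eps-2}$ over the square. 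Hence
\[
\EE\Big[\big([\Psi_j]_{1-\frac{\nu}{2}-\frac{\eps}{2};L^p(w)}\,2^{j(\kappa+\frac{\nu}{2H})}\big)^{2k}\Big] \le C^{2k}\,k!\,\big(2^{j\kappa}\|\Delta_j g\|_{L^p(w)}\big)^{2k}.
\]

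Then I would define, in the $q<\infty$ case, $F$ via $F^{2k}:=\big(\sum_{j\ge-1} 2^{j\eps'}\big)^{-1}\sum_{j\ge-1} 2^{j\eps'}\,\big([\Psi_j]_{1-\frac\nu2-\frac\eps2}\,2^{j(\kappa+\frac{\nu}{2H})}/\|g\|_{B^\kappa_{p,q}(w)}\big)^{2k}$ for a tiny $\eps'>0$ chosen so that a further $\ell^q$-to-$\ell^\infty$ loss is absorbed, or more simply normalize so that $\|T^{L^H}_{s,t}g\|_{B^{\kappa+\frac{\nu}{2H}}_{p,q}(w)}\le F\,|t-s|^{1-\frac\nu2-\frac\eps2}\|g\|_{B^\kappa_{p,q}(w)}$ holds with $F:=\sup_{s\ne t}\|T^{L^H}_{s,t}g\|_{B^{\kappa+\frac{\nu}{2H}}_{p,q}(w)}/(|t-s|^{1-\frac\nu2-\frac\eps2}\|g\|_{B^\kappa_{p,q}(w)})$, and bound its moments: from the per-$j$ estimate and Minkowski/Hölder over $j$ (this is where one spends a bit of the $\frac{\nu}{2H}$ gain, or the $\delta>0$ in the $q=\infty$ case, to make the $j$-sum converge) one gets $\EE[F^{2k}]\le (C')^{2k} k!$ for all $k$. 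Finally, expanding $e^{CF^2}=\sum_k C^k F^{2k}/k!$ and using $\EE[F^{2k}]/k!\le (C')^{2k}$ shows $\EE[e^{CF^2}]<\infty$ for $C<1/(C')^2$, which is the claimed Gaussian-type integrability. The $q=\infty$ statement is identical except that the $\ell^\infty$ sup over $j$ forces one to trade $\delta>0$ regularity for summability, exactly as in the standard proof that $\ell^\infty$-Besov embeds into a slightly-less-regular summable space.

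\textbf{Main obstacle.} The delicate point is the passage from the scale-by-scale GRR/moment bounds to a \emph{single} random variable $F$ with genuinely Gaussian tails $\EE[e^{CF^2}]<\infty$: one must show the constants in $\EE[F^{2k}]\lesssim (C')^{2k}k!$ are uniform in $j$ (which they are, since Lemma \ref{lem:average_blocks} and Lemma \ref{lem:moments} are uniform in $j,s,t$) and that summing over the infinitely many scales $j$ does not destroy the factorial growth control — this is handled by the $\eps$ of room in the time exponent and the strict gain $\frac{\nu}{2H}$ (resp. the loss $\delta$) in the space regularity, which together make the $j$-series summable with constants independent of $k$. Everything else is a bookkeeping combination of results already established in the excerpt.
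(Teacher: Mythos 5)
Your plan is correct and rests on the same two ingredients as the paper (Lemma \ref{lem:average_blocks} and Lemma \ref{lem:moments}), but it assembles them in a genuinely different order. The paper first proves, \emph{for fixed $s\leq t$}, a uniform bound on $\EE\bigl[\exp\bigl(C\,\|T^{L^H}_{s,t}g\|^2_{B^{\kappa+\nu/(2H)}_{p,q}(w)}/(|t-s|^{2-\nu}\|g\|^2_{B^\kappa_{p,q}(w)})\bigr)\bigr]$ by Taylor-expanding the exponential and splitting the sum at $2m\lessgtr q$: the terms with $2m\leq q$ are handled by Jensen in the concave direction, and the terms with $2m>q$ by Jensen in the convex direction after normalizing by the probability weights $2^{jq\kappa}\|\Delta_j g\|^q_{L^p(w)}/\sum_j 2^{jq\kappa}\|\Delta_j g\|^q_{L^p(w)}$; it then applies Garsia--Rodemich--Rumsey \emph{once}, with the Gaussian gauge $\psi(x)=e^{Cx^2}$ and $p(u)=u^{1-\nu/2}$, to the double time-integral of that exponential. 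You instead run GRR scale-by-scale with polynomial gauges $\psi(u)=u^{2k}$, collect the moment bounds $\EE[F^{2k}]\lesssim (C')^{2k}k!$, and resum the exponential at the end; this is essentially the careful version of the ``Kolmogorov-type'' proof that the paper's remark after Lemma \ref{lem:average_blocks} dismisses as losing the Gaussian tails --- your point is precisely that tracking the $k$-dependence of all constants (GRR constants included, which indeed stay of order $(C'')^{2k}$) recovers them. Both routes work; the paper's is shorter because one GRR application with the exponential gauge replaces the whole moment resummation. One caution on your $j$-summation step: the theorem claims \emph{no} loss of spatial regularity when $q<\infty$, so you cannot ``spend a bit of the $\nu/(2H)$ gain'' there. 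The correct device is Minkowski's inequality in $L^{2k/q}(\Omega)$ applied to $\sum_j 2^{j(\kappa+\nu/(2H))q}[\Psi_j]_\theta^q$ (valid once $2k\geq q$, which suffices since only large $k$ matter for the exponential moment), or equivalently the paper's Jensen normalization; the $\delta$-loss is genuinely needed only for $q=\infty$, as your closing sentence correctly indicates.
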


\begin{proof}
First consider $1\leq q <  \infty$. Thanks to Lemma \ref{lem:average_blocks} and Lemma \ref{lem:moments}, we know that there exists a constant $c_q >0$ such that for all $0 \leq s<t\leq T$
\[\EE\left[\left(\frac{\left\|\Delta_j T^{L^H}_{s,t} g\right\|_{L^p(w)}}{|t-s|^{1-\frac{\nu}{2}} 2^{-j \frac{\nu}{2H}}\left\|\Delta_j g\right\|_{L^p(w)}}\right)^q\right] \leq c_q\]
and that for all $m\ge 1$,
\[\EE\left[\left\|\Delta_j T^{L^H}_{s,t} g\right\|^{2m}_{L^p(w)}\right]
\lesssim\left(
K^{2m} + \frac{m!}{C_2^m}
\right)|t-s|^{(2 - \nu)m}2^{-j \frac{\nu m}{H}}\left\|\Delta_j g\right\|^{2m}_{L^p(w)}.\]
Let us take $C<C_2$. We have 
\[\EE\left[\exp\left(C \left(\frac{\|T^{L^H}_{s,t} g\|_{B^{\kappa + \frac{\nu}{2H}}_{p,q}(w)} }{|t-s|^{1-\frac{\nu}{2}} \|g\|_{B^\kappa_{p,q}(w)}}\right)^2\right)\right] = A + B,\]
with
\[A=
\sum_{2m \leq q} \frac{C^m}{m!|t-s|^{(2-\nu)m}\|g\|^{2m}_{B^\kappa_{p,q}(w)}} \EE\left[\left(\sum_{j\ge 1} 2^{j\left(\kappa + \frac{\nu}{2H}\right)q} \left\|\Delta_j T^{L^H}_{s,t} g\right\|^{q}_{L^p(w)}\|\right)^{\frac{2m}{q}}\right]\]
and
\[ B =
\sum_{2m>q} \frac{C^m}{m!} 
\EE\left[
    \left(
        \sum_{j\ge{-1}}
            \frac{2^{jq \kappa} \|\Delta _j g\|^q_{L^P(w)}} {\sum_{j\ge{-1}}2^{jq \kappa} \|\Delta _j g\|^q_{L^P(w)}} 
    \left(\frac{\|\Delta_j T^{L^{H}}_{s,t}g\|_{L^p(w)}}{|t-s|^{1-\frac\nu2} 2^{-j\frac{\nu}{2}}\|\Delta _j g\|_{L^P(w)}}\right)^q
    \right)^{\frac{2m}{q}}
\right].
\]
For $A$, we use Jensen inequality in the concave case, and we have thanks to Lemma \ref{lem:moments}
\begin{align*}
    A \leq & \sum_{2m \leq q} \frac{C^m}{m!|t-s|^{(2-\nu)m}\|g\|^{2m}_{B^\kappa_{p,q}(w)}} \left(\sum_{j\ge 1} 2^{j\left(\kappa + \frac{\nu}{2H}\right)q} \EE\left[ \left\|\Delta_j T^{L^H}_{s,t} g\right\|^{q}_{L^p(w)}\|\right]\right)^{\frac{2m}{q}}\\
    \lesssim &
    \sum_{2m < q} \frac{C^m \left(c_q^\frac{2}{q}\right)^m}{m!} \\
    \lesssim & e^{C c_q^\frac{2}{q}}.
\end{align*}
For $B$, we use Jensen inequality in the convex case, and we have again thanks to Lemma \ref{lem:moments}
\begin{align*}
    B = &
\sum_{2m>q} \frac{C^m}{m!} 
\EE\left[
    \left(
        \sum_{j\ge{-1}}
            \frac{2^{jq \kappa} \|\Delta _j g\|^q_{L^P(w)}} {\sum_{j\ge{-1}}2^{jq \kappa} \|\Delta _j g\|^q_{L^P(w)}} 
    \left(\frac{\|\Delta_j T^{L^{H}}_{s,t}g\|_{L^p(w)}}{|t-s|^{1-\frac\nu2} 2^{-j\frac{\nu}{2}}\|\Delta _j g\|_{L^P(w)}}\right)^q
    \right)^{\frac{2m}{q}}
\right] \\
\leq &
\sum_{2m>q} \frac{C^m}{m!} 
    \sum_{j\ge{-1}}
            \frac{2^{jq \kappa} \|\Delta _j g\|^q_{L^P(w)}} {\sum_{j\ge{-1}}2^{jq \kappa} \|\Delta _j g\|^q_{L^P(w)}} 
    \EE\left[\left(\frac{\|\Delta_j T^{L^{H}}_{s,t}g\|_{L^p(w)}}{|t-s|^{1-\frac\nu2} 2^{-j\frac{\nu}{2}}\|\Delta _j g\|_{L^P(w)}}\right)^{2m}\right]\\
    \lesssim &
    \sum_{2m>q} \frac{C^m}{m!} \left(K^{2m} + \frac{m!}{C_2^m}\right)\\
    \lesssim &
    e^{C K^2} + \frac{1}{1-\frac{C}{C_2}}.
\end{align*}
Hence, if we define 
\[F= \int_0^T \int_0^T \exp\left(C \left(\frac{\|T^{L^H}_{s,t} g\|_{B^{\kappa + \frac{\nu}{2H}}_{p,q}(w)} }{|t-s|^{1-\frac{\nu}{2}} \|g\|_{B^\kappa_{p,q}(w)}}\right)^2\right) \dd s \dd t,\]
we are exactly in the scope of the Garsia-Rodemich-Rumsey inequality with $\psi (x) = e^{C x^2}$ and $p(u) = u^{1-\frac{\nu}{2}}$, which leads to the wanted result when $q<+\infty$.
When $q = \infty$ one needs to deal with the supremum in $q$, and thus needs to lose a bit in $q$. We leave it to the reader since its a direct adaptation of the previous proof. 
\end{proof}

With the above construction of the averaged field associated with the fractional L\'evy process, we are ready to prove Theorem \ref{thm:main fractional Levy}. 

\begin{proof}[Proof of Theorem \ref{thm:main fractional Levy}]
As in Section \ref{sec: existence and uniqueness of mSHE}, let us first consider a distribution $\xi \in \cC^{-\vartheta}$ and an initial condition $\psi \in \cC^{\beta}$ with $0<\vartheta<1$ and $\beta = \vartheta + \eps_1$.
The singularity $\rho = \frac{\vartheta + \beta}{2}$ is one of the limiting thresholds in all the previous computations. Hence, the best choice of $2-\vartheta>\beta>\vartheta$ for the space of the initial condition is  for a small $\eps_1>0$. 
Note also that in Theorem \ref{thm:averaging field} one want to take $\nu$ as close as possible to 1, since $\frac{\nu}{2H}$ is somehow the index of regularization of the LFSM. Note also that for some $\eps_2>0$ small enough, one has
\[\gamma = 1 - \frac{\nu}{2} - \eps_2.\]
The condition which allows the nonlinear Young--Volterra calculus to work is 
\[\frac{1}{2}< \gamma-\rho \]
here it gives us
\[\vartheta < 1 - \nu - \frac{\eps_1}{2} - \eps_2,\]
which gives for some $\eps_3>0$
\[\nu = 1 - \vartheta -  \frac{\eps_1}{2} - \eps_2 - \eps_3.\]
Hence, for an admissible weight $w$ and a function $g\in \cC^{\kappa}$ with $\kappa > 3 - \frac{1-\vartheta}{2H}$, there exists $\eps_1>0$, $\eps_2>0$ and $\eps_3>0$ such that all the previous condition are satisfied and such that almost surely
\[T^{L^H} \in \cC^{\gamma}_T \cC^{3}(w).\]

Applying Theorem \ref{thm:main existence and uniquness with existing avg op} and \ref{cor:main stability of solutions} we have the desired result.
\end{proof}

\begin{proof}[Proof of Corollary \ref{cor:white noise}]
The proof is straightforward when one recall that the space white noise $\xi$ is almost surely in $\cC^{-\vartheta}$ for any $\vartheta > \frac{d}{2}$. One can then apply Theorem \ref{thm:main fractional Levy}.
\end{proof}

\section{Conclusion}\label{sec:conclussion}

We have proven that  a certain class of measurable paths $\omega:[0,T]\rightarrow \RR$ provides strong regularizing effects on the multiplicative stochastic heat equation of the form of \eqref{eq:formalequationdef}. In particular, we prove that there exists measurable paths $\omega$ such that  local existence and uniqueness of such equations holds even when the non-linear function $g$ is only a Schwartz distribution and $\xi\in \cC^{-\vartheta}$ for some $\vartheta <1$, thus allowing for very rough spatial noise in the one dimensional setting. To this end, we apply the concept of non-linear Young integration, and extends this to the infinite dimensional setting with  Volterra operators.   This sheds new light on the application of the "pathwise regularization by noise" techniques developed in \cite{Catellier2016} to the context of SPDEs, and we believe that this program can be taken further in several directions in the future, and we provide some thoughts on such developments here. 

In the current article we restricted our analysis to the spatial space white noise on $\TT$ (Corollary \ref{cor:white noise}). Our techniques could be extended to $\TT^d$, but with the same techniques, one could not allow for white spatial noise $\xi$. Indeed, recall that if $\xi:\TT^d\rightarrow \RR$ then $\xi\in \cC^{-\frac{d}{2}-\eps}$ for any $\eps>0$ and thus even in $d=2$ the noise is too rough, since the best we can deal with in the Young setting developed above is when $\xi\in \cC^{-\vartheta}$ with $\vartheta<1$. However, there is a possibility that techniques from the theory of paracontrolled calculus as developed in \cite{Gubinelli2015Paracontrolled} could be applied here to make sense of the product, and thus a generalization could then be possible. In that connection, one would possibly need "second order correction terms" associated to the averaged field $T^\omega g$ which at this point is unclear (at least to us) how one should construct. 

Another possible direction would be to allow multiplicative space-time noise, i.e. consider $\xi$ as a distribution on $[0,T]\times \RR^d$ (or $\TT^d$). When $\xi$ is depending on time, one can no longer use the non-linear Young integral in the same way as developed here. The situation looks like the one encountered when considering regularization by noise for ODEs with multiplicative (time dependent) noise of the form 
\begin{equation}\label{eq:}
    y_t=y_0+\int_0^t b(y_s+\omega_s)\dd \beta_s+\omega_t, \qquad y_0\in \RR^d. 
\end{equation}
Existence and uniqueness of the above equation when $\beta$ is a fractional Brownian motion with $H>\frac{1}{2}$ was recently established in \cite{galeati2020regularization}, even when $b$ is a distribution. The key idea in this result was to consider the average operator 
\begin{equation*}
    \Gamma_{s,t}^\omega b(x):=\int_s^t b(x+\omega_r)\dd \beta_r
\end{equation*}
and use a recently developed probabilistic lemma by Hairer and Li \cite{hairer2020} to show that the regularity of $\Gamma^\omega b$ is linked to the regularity of $T^\omega b$. 
By considering an averaging operator given on the form on a Banach space
\begin{equation}
    \Pi_{s,t}^\omega b(x)=\int_s^t b(x+\omega_s)\dd \xi_s, 
\end{equation}
where $x\in E$ and $\xi_t\in E$ is a time-colored spatially-white noise, for some Banach space $E$. If one can extend the lemmas of Hairer and Li to the infinite dimensional setting, showing the connection between the regularity of $\Pi^\omega b$ and $T^\omega b$ (when considering $T^\omega b$ as an infinite dimensional averaged field, as in Proposition \ref{prop: inf dim reg of avg op}) there is a possibility that one could prove regularization by noise for stochastic heat equations with space-time noise on the form 
\begin{equation*}
    x_t =P_t\psi +\int_0^t P_{t-s}g(x_s)\dd \xi_s+\omega_t,  
\end{equation*}
by using similar techniques as developed in the current article. 
We leave a deeper investigation into these possibilities open for future work.

\appendix

\section{Basic concepts of Besov spaces and properties of the heat kernel}

We gather here some material about Besov spaces, heat kernel estimates and embedding in those spaces. This section is strongly inspired by  \cite{BahCheDan} and \cite{Triebel2006}. See also \cite{Mourrat2017Global} for the weighted besov norms. For the sake of the comprehensions, we give elementary proofs of the main points for developing the theory. This is the purpose of subsection \ref{sub:weighted_lebesgue} and \ref{sub:weighted_besov}.
For the sake of the Volterra sewing lemma (see Section \ref{sec:sewing}), we need a few non-standard estimates on the heat kernel action on Besov spaces. This is the purpose of subsection \ref{sub:heat_kernel}. In Section \ref{sec:CL-standard} we prove the elementary Cauchy-Lipschitz theorem for multiplicative SHE without additive perturbation. 

\subsection{Weighted Lebesgue spaces}\label{sub:weighted_lebesgue}

In order to work in a general setting, we will define the weighted Besov spaces. To do so, let us define the class of admissible weight, following Triebel chapter 6 \cite{Triebel2006}. In \cite{Mourrat2017Global} one can find a more general definition for weights, which somehow allows the same kind of estimates. 

\begin{defn}
We say that $w\in C^{\infty}(\RR^d;\RR_+\backslash\{0\})$ is an admissible weight function if 
\begin{itemize}
\setlength\itemsep{0.5em}
    \item[{\rm (i)}] 
    For all $\kappa \in \NN^{d}$, there exists a positive constant $c_\kappa$ such that 
    \[\forall x\in\RR^d,\quad \partial^\kappa w(x) \leq c_\kappa w(x).\]
    \item[{\rm (ii)}]
    There exists $\lambda'\ge 0$ and $c>0$ such that
    \begin{equation}\label{eq:lambda'}
    w(x) \leq c w(y) (1+|x-y|^2)^{\frac{\lambda'}{2}}.
    \end{equation}
    \end{itemize}
    Furthermore for any admissible weight we define the weighted $L^p$ spaces as the following :
    \[L^p(\RR^d;\RR|w) = \{f:\RR^d \to \RR \, : \, \|f\|_{L^p(\RR^d,w)} = \|wf\|_{L^p(\RR^d;\RR)}<+\infty\}.\]
\end{defn}

A direct consequence of the definition is that the product of two admissible weights is also an admissible weight. Furthermore, standard polynomials weights are of course admissible, as proved in the following Proposition. Finally, H\"older inequality in weighted Lebesgue spaces is straightforward. 

\begin{prop}
Let $\lambda \in \RR$, and let us define for all $x\in \RR^d$, $\langle x \rangle = (1+|x|^2)^{\frac{1}{2}}$. Then $\langle \cdot \rangle^\lambda$ is an admissible weight with $\lambda' = |\lambda|$.
\end{prop}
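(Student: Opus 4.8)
The plan is to check directly the two defining conditions of an admissible weight for $w(x) = \langle x\rangle^\lambda = (1+|x|^2)^{\lambda/2}$. Smoothness and strict positivity are immediate, since $1+|x|^2 \ge 1$ everywhere, so the only content lies in the derivative estimate (i) and in the slow-variation estimate \eqref{eq:lambda'}.

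For (i), I would prove by induction on $|\kappa|$ that $\partial^\kappa w(x) = Q_\kappa(x)\,(1+|x|^2)^{\lambda/2 - |\kappa|}$ for some polynomial $Q_\kappa$ with $\deg Q_\kappa \le |\kappa|$. The case $\kappa = 0$ is trivial with $Q_0 \equiv 1$. For the inductive step, writing $g(x) = 1+|x|^2$ so that $\partial_i g = 2x_i$, one computes $\partial_i\big(Q_\kappa\, g^{\lambda/2 - |\kappa|}\big) = \big[(\partial_i Q_\kappa)\,g + (\lambda - 2|\kappa|)\,x_i\,Q_\kappa\big]\, g^{\lambda/2 - |\kappa| - 1}$, and the bracket is a polynomial of degree at most $|\kappa| + 1$. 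Since a polynomial of degree at most $|\kappa|$ is bounded by $C_\kappa\,(1+|x|^2)^{|\kappa|/2}$, this yields $|\partial^\kappa w(x)| \le C_\kappa\,(1+|x|^2)^{(\lambda - |\kappa|)/2} \le C_\kappa\,(1+|x|^2)^{\lambda/2} = C_\kappa\, w(x)$, the last step using $1+|x|^2 \ge 1$ and $|\kappa| \ge 0$. Hence (i) holds with $c_\kappa = C_\kappa$.

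For \eqref{eq:lambda'}, the key elementary fact is the Peetre-type inequality $1+|x|^2 \le 2\,(1+|x-y|^2)(1+|y|^2)$, which follows from $|x|^2 \le 2|x-y|^2 + 2|y|^2$; in other words $\langle x\rangle \le \sqrt 2\,\langle x-y\rangle\,\langle y\rangle$. If $\lambda \ge 0$, raising this to the power $\lambda$ gives $w(x) \le 2^{\lambda/2}\,\langle x-y\rangle^{\lambda}\,w(y)$, which is \eqref{eq:lambda'} with $\lambda' = \lambda = |\lambda|$ and $c = 2^{\lambda/2}$. If $\lambda < 0$, I would apply the same inequality with $x$ and $y$ interchanged, i.e. $\langle y\rangle \le \sqrt 2\,\langle x-y\rangle\,\langle x\rangle$ (using $\langle y-x\rangle = \langle x-y\rangle$), raise it to the power $-\lambda > 0$, and rearrange to get $\langle x\rangle^{\lambda} \le 2^{-\lambda/2}\,\langle x-y\rangle^{-\lambda}\,\langle y\rangle^{\lambda}$, which is \eqref{eq:lambda'} with $\lambda' = -\lambda = |\lambda|$ and $c = 2^{|\lambda|/2}$.

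There is no genuine difficulty in this proof; the only points deserving care are the uniformity of the polynomial-degree bookkeeping in the induction for (i), and keeping straight which sign of $\lambda$ one is in — and hence in which direction the Peetre inequality must be applied — when verifying \eqref{eq:lambda'}.
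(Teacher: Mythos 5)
Your proof is correct and, for the substantive condition \eqref{eq:lambda'}, follows essentially the same route as the paper: establish a Peetre-type inequality $\langle x\rangle \lesssim \langle x-y\rangle\,\langle y\rangle$ and then treat the signs $\lambda\ge 0$ and $\lambda<0$ separately by applying it in the appropriate direction (your derivation via $|x|^2\le 2|x-y|^2+2|y|^2$ is a little more direct than the paper's manipulation of $(1+y\cdot(x-y))^2$, and yields a slightly better constant, but the idea is identical). The only difference in scope is that you also verify the derivative condition (i) by induction, which the paper's proof silently omits; that verification is sound and arguably makes your write-up more complete.
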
 

\begin{proof}
Let us remark that for all $x,y\in\RR^d$
\[(1+y\cdot(x-y) )^2 \leq 2 \left(1+|y|^2 |x-y|^2\right) \leq 2(1+|y|^2|x-y|^2)^2 .\]
Hence, 
\[1+y\cdot(x-y) \leq \sqrt{2} (1+|y|^2|x-y|^2),\]
then
\begin{align*}
\la x \ra^2 
= & 
1 + |x-y|^2 + |y|^2 + 2y \cdot (x-y) \\
\leq & |x-y|^2 + |y|^2 + 2\sqrt{2}\left(1+|y|^2|x-y|^2\right)\\ 
\leq &  2\sqrt{2}\left(1+|x-y|^2 + |y|^2 +|y|^2|x-y|^2\right)\\
= &2\sqrt{2}\la y\ra^2 \la x-y \ra^2.
\end{align*}
and it therefore follows that
\[\langle x \rangle \leq 2^{\frac34} \langle y\rangle \langle x-y\rangle.\]
 If $\lambda \ge 0$, 
\[\langle x \rangle^\lambda \leq 2^{\frac{\lambda 3 }{4}}\langle y\rangle^\lambda \langle x-y\rangle^\lambda,\]
and 
\[\langle x \rangle^{-\lambda} =\langle x-y \rangle^{\lambda}\left(\langle x \rangle \langle y-x \rangle \right)^{-\lambda} \leq 2^{\frac{3\lambda }{4}}\langle y\rangle^{-\lambda}\langle x-y \rangle^{\lambda}, \]
which concludes the proof.
\end{proof}

\begin{lem}
Let $w$ be an admissible weight and $\lambda'$ defined as in Equation \eqref{eq:lambda'}. Then for $1\leq p,q,r \leq +\infty$ with $\frac{1}{p} + \frac{1}{q} = 1 + \frac{1}{r}$ and any measurable functions $f$ and $g$,
\[\|f*g\|_{L^r(w)} \leq \|f\|_{L^p(\langle\cdot\rangle^{\lambda'})} \|g\|_{L^p(w)}.\]
\end{lem}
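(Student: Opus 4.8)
The plan is to reduce the weighted estimate to the classical (unweighted) Young convolution inequality, absorbing the weight into the two factors by means of the admissibility property \eqref{eq:lambda'}. First I would write, for a.e. $x\in\RR^d$,
\[
w(x)\,\big|(f*g)(x)\big| \le \int_{\RR^d} w(x)\,|f(x-y)|\,|g(y)|\,\dd y ,
\]
and then insert the pointwise bound furnished by \eqref{eq:lambda'}, namely $w(x)\le c\,w(y)\,(1+|x-y|^2)^{\lambda'/2}=c\,w(y)\,\langle x-y\rangle^{\lambda'}$. This is the key step; it gives
\[
w(x)\,\big|(f*g)(x)\big| \le c\int_{\RR^d} \langle x-y\rangle^{\lambda'}\,|f(x-y)|\;\cdot\; w(y)\,|g(y)|\,\dd y
= c\,\big[(\langle\cdot\rangle^{\lambda'}|f|)*(w|g|)\big](x).
\]

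Next I would apply the classical Young convolution inequality $\|F*G\|_{L^r}\le\|F\|_{L^p}\|G\|_{L^q}$, valid for all $1\le p,q,r\le\infty$ with $\frac1p+\frac1q=1+\frac1r$, to the nonnegative functions $F=\langle\cdot\rangle^{\lambda'}|f|$ and $G=w|g|$. Taking $L^r$ norms in the previous display then yields
\[
\|f*g\|_{L^r(w)}=\|w\,(f*g)\|_{L^r}\le c\,\|\langle\cdot\rangle^{\lambda'}f\|_{L^p}\,\|w\,g\|_{L^q}
= c\,\|f\|_{L^p(\langle\cdot\rangle^{\lambda'})}\,\|g\|_{L^q(w)},
\]
which is the asserted inequality, the constant $c$ being the one appearing in \eqref{eq:lambda'} (so the statement should be read up to the $\lesssim$ convention used throughout). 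All endpoint cases $p,q,r\in\{1,\infty\}$ are covered because the classical Young inequality holds there as well, and the convolution $f*g$ is defined a.e. precisely when the right-hand side is finite, by a standard Tonelli argument applied to the last display.

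There is no genuine obstacle here; the only point requiring care is that the weight must be split in the \emph{asymmetric} manner dictated by \eqref{eq:lambda'}: the polynomial factor $\langle x-y\rangle^{\lambda'}$ has to be attached to the $f$-variable and $w(y)$ to the $g$-variable, which is exactly why the estimate is not symmetric in $f$ and $g$ and why it is the polynomial weight $\langle\cdot\rangle^{\lambda'}$ (rather than $w$) that appears on the $f$ factor.
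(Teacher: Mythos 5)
Your argument is correct and is essentially the paper's own proof: both split the weight via the admissibility bound $w(x)\lesssim w(y)\langle x-y\rangle^{\lambda'}$, rewrite the weighted convolution as $(\langle\cdot\rangle^{\lambda'}|f|)*(w|g|)$, and conclude with the classical Young inequality. Your remark that the inequality holds only up to the implicit constant from \eqref{eq:lambda'} (so the stated ``$\leq$'' should really be ``$\lesssim$'') is a fair and accurate observation about the paper's statement.
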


\begin{proof}
The result is a direct consequence of the definition of admissible weights and of standard Young inequality. Indeed,
\begin{align*}
    \|f*g\|_{L^r(w)}
        =&
        \left(\int_{\RR^d} \left|\int_{\RR^d} f(y) g(x-y)\dd y\right|^r w(x)^r\dd x\right)^{\frac{1}{r}} \\
        \lesssim &
        \left(\int_{\RR^d} \left|\int_{\RR^d} \la x-y\ra^{\lambda'}f(x-y) w(y) g(y)\dd y\right|^r w(x)^r\dd x\right)^{\frac{1}{r}} \\
        =&
        \|(\llrr^{\lambda'}f)*(wg)\|_{L^r}\\
        \leq &
        \|\llrr^{\lambda'}f\|_{L^p}\|wg\|_{L^q}\\
        =&
        \|f\|_{L^p(\llrr^{\lambda'})}\|g\|_{L^q(w)}
\end{align*}
which is the desired result. 
\end{proof}

Furthermore, as usual in order to define Besov spaces we will work with functions with compactly supported Fourier transform. In order to deal with such functions, let us prove a Bernstein-type lemma in weighted $L^p$ space:

\begin{lem}\label{lem:bernstein}
Let $w$ be an admissible weight. Let $\cC = \{\xi\in \RR^d\, : \, c_1\leq |\xi| \leq c_2\}$ be an annulus and $\cB$ be a ball. There exists a constant $C>0$ such that for all $1\leq p \leq p' \leq +\infty$, $n\ge 0$,  $a\ge 1$,  and for any function $f \in L^p(w)$, we have
\begin{itemize}
\setlength\itemsep{0.5em}
    \item[{\rm (i)}] If $\mathrm{supp} \hat{f}  \subset a \cB$ then for any $k\in\NN^d$
    \[\|D^n f\| = \sup_{|k|=n}\|\partial^k f\|_{L^{p'}(w)} \leq C^{n +1} a^{n + d \left(\frac{1}{p}-\frac{1}{p'}\right)} \|f\|_{L^p(w)}.\]
    
    \item[{\rm (ii)}] If $\mathrm{supp} \hat{f}  \subset a \cA$  then 
    \[\frac{1}{C^{n+1}} a^{n} \|f\|_{L^P(w)} \leq \|D^n f\|_{L^p(w)} \leq C^{n+1} a^{n} \|f\|_{L^p(w)}.\]
\end{itemize}
Here $\hat{f}$ is the Fourier transform of $f$.
\end{lem}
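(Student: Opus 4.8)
The plan is to mimic the classical proof of the Bernstein inequalities (see \cite[Lem.~2.1]{BahCheDan}), the only new ingredient being that every invocation of the usual Young convolution inequality is replaced by its weighted counterpart established in the previous lemma, and that the hypothesis $a\ge 1$ is used to keep all bounds uniform in $a$.

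\textbf{Part (i).} I would fix once and for all a function $\phi\in C_c^\infty(\RR^d)$ with $\phi\equiv 1$ on a neighbourhood of $\cB$. Since $\hat f$ is supported in $a\cB$, we have $\hat f=\phi(\cdot/a)\hat f$, whence for every multi-index $k$ with $|k|=n$,
\[
  \partial^k f \;=\; a^{d+n}\,\psi_k(a\,\cdot)\ast f,\qquad \psi_k:=\partial^k\check\phi .
\]
Applying the weighted Young inequality $\|F\ast G\|_{L^{p'}(w)}\le \|F\|_{L^{\bar q}(\llrr^{\lambda'})}\|G\|_{L^{p}(w)}$, where $\tfrac1{\bar q}=1+\tfrac1{p'}-\tfrac1p\in[0,1]$, to $F=a^{d+n}\psi_k(a\cdot)$ and $G=f$ gives
\[
  \|\partial^k f\|_{L^{p'}(w)}\le \big\|a^{d+n}\psi_k(a\,\cdot)\big\|_{L^{\bar q}(\llrr^{\lambda'})}\,\|f\|_{L^{p}(w)} .
\]
A change of variables turns $\big\|a^{d+n}\psi_k(a\,\cdot)\big\|_{L^{\bar q}(\llrr^{\lambda'})}$ into $a^{\,n+d(1/p-1/p')}$ times the $L^{\bar q}$--norm of $y\mapsto \la y/a\ra^{\lambda'}\psi_k(y)$; since $a\ge 1$ one has $\la y/a\ra\le\la y\ra$, so this is at most $a^{\,n+d(1/p-1/p')}\,\|\la\cdot\ra^{\lambda'}\psi_k\|_{L^{\bar q}}$. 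It then remains to observe that $\la\cdot\ra^{\lambda'}\partial^k\check\phi$ is a Schwartz function whose $L^{\bar q}$ norm (uniformly in $\bar q\in[1,\infty]$, via $\|\cdot\|_{L^{\bar q}}\le\|\cdot\|_{L^1}+\|\cdot\|_{L^\infty}$) is controlled by $C^{n+1}$; this is the same elementary estimate as in the unweighted case, obtained by writing $x^\beta\partial^k\check\phi$ as $\pm$ the inverse Fourier transform of $\partial^\beta\big((i\xi)^k\phi(\xi)\big)$ and using Leibniz on the fixed, compactly supported $\phi$.

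\textbf{Part (ii).} The upper bound is the particular case $p'=p$ of (i), with $\cB$ replaced by a ball containing the annulus $\cC$. For the lower bound I would use the multinomial identity $1=\sum_{|\alpha|=n}\tfrac{n!}{\alpha!}\,\xi^{2\alpha}/|\xi|^{2n}$, which holds on $\mathrm{supp}\hat f\subset a\cC$, together with a cutoff $\theta\in C_c^\infty(\RR^d\setminus\{0\})$ equal to $1$ on $\cC$. This produces a representation $f=\sum_{|\alpha|=n} g_\alpha\ast\partial^\alpha f$ with $\hat g_\alpha(\xi)=c_\alpha\,\xi^\alpha\,\theta(\xi/a)/|\xi|^{2n}$, hence $g_\alpha=a^{-n}\,a^d\,\check h_\alpha(a\,\cdot)$ for a fixed Schwartz function $h_\alpha$. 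The weighted Young inequality with $\bar q=1$ and the same rescaling argument as above (again using $a\ge1$) give $\|g_\alpha\ast\partial^\alpha f\|_{L^p(w)}\le a^{-n}\|\la\cdot\ra^{\lambda'}\check h_\alpha\|_{L^1}\,\|\partial^\alpha f\|_{L^p(w)}\le a^{-n}C^{n+1}\|\partial^\alpha f\|_{L^p(w)}$. Summing over the multi-indices $|\alpha|=n$ (whose combinatorial weights sum to $d^{\,n}$) and rearranging yields $a^n\|f\|_{L^p(w)}\le C^{n+1}\sup_{|\alpha|=n}\|\partial^\alpha f\|_{L^p(w)}$.

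The only point requiring a little care is the uniform-in-$n$ geometric constant $C^{n+1}$: one must check that the weighted $L^{\bar q}$ (resp. $L^1$) norms of $\la\cdot\ra^{\lambda'}$ times the $n$-th derivatives of $\check\phi$ (resp. the inverse-Laplacian--type kernels $\check h_\alpha$) grow at most geometrically in $n$. This is, however, precisely the estimate already present in the unweighted Bernstein lemma, the weight contributing only a fixed finite number of extra derivatives on the compactly supported Fourier side; so the weight introduces no genuine new obstacle beyond the bookkeeping absorbed by the weighted Young inequality.
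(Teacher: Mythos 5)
Your proof is correct and takes essentially the same route as the paper's: both parts rest on writing $f$ (resp.\ $\partial^k f$) as a convolution with a rescaled kernel whose Fourier transform is a fixed cutoff equal to $1$ on the ball (resp.\ the annulus, combined with the multinomial identity for $|\xi|^{2n}$), then applying the weighted Young inequality and using $a\ge 1$ together with $\la a^{-1}x\ra^{\lambda'}\le\la x\ra^{\lambda'}$ to absorb the weight after rescaling. Your extra remark about tracking the geometric constant $C^{n+1}$ in $n$ is a point the paper's proof glosses over, but it is bookkeeping rather than a different argument.
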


\begin{proof}
Let $\hat{K}$ be a function such that $\hat K \equiv 1$ on $\cB$ and  $\mathrm{supp}(\hat{K})$ is compactly supported and let us define $K_a = a^{d} K(a\cdot) $. If $\mathrm{supp} \hat{f} \subset a \cB$, then $f = K_a * f$, and $\partial^k f = (\partial^k K_a )* f = a^{|k|} ((\partial^k K)_a )* f$, where $(\partial^k K)_a = a^d \partial^k K (a\cdot) $. Hence, by the previous weighted Young inequality, for $1\leq p \leq p'\leq +\infty$ and $\frac{1}{r} = 1 -\left(\frac{1}{p} - \frac{1}{p'}\right)$,
\begin{equation*}
\|\partial^k f\|_{L^{p'}(w)} 
    =
    a^{|k|}\|(\partial^k K)_a * f\|_{L^{p'}(w)}\\
    \le
    a^{|k|} \|(\partial^k K)_a\|_{L^r(\llrr^{\lambda'})}  \|f\|_{L^{p}(w)}.
\end{equation*}
Furthermore, since $a\ge 1$ and $\lambda'\ge 0$, one has $\langle a^{-1}x \rangle^{\lambda'} \leq \langle x \rangle^{\lambda'}$, and
\begin{align*}
    \|(\partial^k K)_a\|_{L^r(\llrr^{\lambda'})}^r 
        = &
        a^{rd} \int_{\RR^d} |\partial^k K(ax)|^r \la x \ra^{r\lambda'} \dd x \\
        = & a^{(r-1)d} \int_{\RR^d} |\partial^k K(x)|^r \la a^{-1}x \ra^{r\lambda'} \dd x \\
        \leq &
        a^{(r-1)d} \int_{\RR^d} |\partial^k K(x)|^r \la x \ra^{r\lambda'} \dd x.
\end{align*}
It follows that
\[\|(\partial^k K)_a\|_{L^r(\llrr^{\lambda'})} \lesssim a^{\left(1-\frac{1}{r}\right)} \lesssim a^{d\left(\frac{1}{p}-\frac{1}{p'}\right)}.\]
Gathering the above considerations proves {\rm (i)}. 

The second inequality of {\rm (ii)} is just a sub-case of {\rm (i)}. For the first inequality of the {\rm (ii)},  consider a smooth function $L$ such that $\mathrm{supp} \hat{L}$ is included in an annulus and such that $\hat{L} \equiv 1$ on $\cA$. Following \cite{BahCheDan} Lemma 2.1 and (1.23) page 25, there exists some real numbers $(A_k)$ such that 
\[|\xi|^{2n} = \sum_{|k|=n} A_k (-i\xi)^k (i\xi)^k,\]
with $(\xi_1,\cdots,\xi_d)^{(k_1,\cdots,k_d)} = \xi_1^{k_1} \cdots \xi_d^{k_d}$. Hence, we have
\[\sum_{|k|=n} A_k \frac{(-i\xi)^k}{|\xi|^{2n}} \hat{L}(a^{-1}\xi) \widehat{\partial^k f}(\xi) = L(a^{-1}\xi)\hat{f}(\xi)\sum_{|k|=n} A_k \frac{(-i\xi)^k(i\xi)^k}{|\xi|^{2n}} = \hat{f}(\xi). \]
For $k \in \NN^d$ with $|k| = n$ let us define 
\[L^k(x) = A_k  \int_{\RR^d}(-i\xi)^k |\xi|^{-2n}\hat{L}(\xi) e^{i\xi\cdot x} \dd x\]
and we have
$f = a^{-n}\sum_{|k|=n} L^k_a * \partial^k f$.
One can use the weighted Young inequality to obtain that
\[\|f\|_{L^p(w)}\lesssim a^{-|n|} \sum_{|k|=n} \|L^k_a\|_{L^1(\llrr^{\lambda'})} \|\partial^k f\|_{L^p(w)}.\]
Furthermore, since $\la a^{-1}x\ra^{\lambda'} \leq \la x\ra^{\lambda'}$, it follows that  
$\|L^k_a\|_{L^1(\llrr^{\lambda'})} \leq \|L^k \|_{L^1(\llrr^{\lambda'})}$.
Finally, we get
\[\|f\|_{L^p(w)}\lesssim a^{-|n|} \|D^n f\|_{L^p(w)},\]
which proves our claim.
\end{proof}

\subsection{Weighted Besov Spaces and standards estimates}\label{sub:weighted_besov}

\begin{defn}
Let $\cA=\{\lambda \in \RR^d\, : \, \frac{3}{4}\le|\lambda|\leq \frac{8}{3} \}$. There exists two radial function $\chi$ and $\varphi$ such that $\mathrm{supp}(\chi) = B(0,\frac34)$, $\mathrm{supp}(\varphi) \subset \cA$, 
\[\forall \lambda \in \RR^d\, \chi(\lambda) + \sum_{j\ge 0} \varphi(2^{-j}\lambda) = 1,\]
and for $j\ge 1$,
\[\mathrm{supp}(\chi)\cap \mathrm{supp}(\varphi(2^{-j}\cdot)) = \emptyset\]
and for $|j-j'|\ge 2$,
\[\mathrm{supp}(\varphi(2^{-j}\cdot)) \cap \mathrm{supp}(\varphi(2^{-j'}\cdot)) = \emptyset.\]
For all $f\in \cS'$ and $j\ge 0$, we define the in-homogeneous Paley-Littlewood blocks by
\[\Delta_{-1}f = \cF^{-1}\big(\chi \hat{f}\big),\quad \mathrm{and} \quad \Delta_j f = \cF^{-1}( \varphi(2^{-j}\cdot) \hat{f} ),\]
where 
$\hat{f}$ denotes the Fourier transform of $f$ and $\cF^{-1}$ the inverse Fourier transform.
\end{defn}

Note that the Paley-Litllewood blocks define a nice appproximation of the unity. We refer to \cite{BahCheDan} proposition 2.12 for a proof. 

\begin{prop}
For all $f\in \cS'$, let us define for all $j\ge -1$ $\cS_j f = \sum_{j' \leq j-1} \Delta_j f$. Then 
\[f = \lim_{j\to \infty} \cS_j f \quad \text{in} \quad \cS'.\]
\end{prop}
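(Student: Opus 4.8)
The plan is to pass to the Fourier side and reduce the whole statement to a convergence of Schwartz functions. Writing $\cS_j f = \sum_{j'\le j-1}\Delta_{j'} f$, its Fourier transform is $\widehat{\cS_j f} = m_j\,\hat f$ with the multiplier $m_j := \chi + \sum_{j'=0}^{j-1}\varphi(2^{-j'}\cdot)$. By the partition of unity identity $\chi(\lambda)+\sum_{j\ge 0}\varphi(2^{-j}\lambda)=1$ we have $m_j = 1-\sum_{j'\ge j}\varphi(2^{-j'}\cdot)$, so that: $1-m_j$ is supported in $\{|\xi|\gtrsim 2^j\}$ and tends to $0$ pointwise; and, since at each point only $O(1)$ of the summands $\varphi(2^{-j'}\cdot)$ are non-zero and those live at frequency scale $\sim|\xi|$, one gets the uniform (in $j$) bound $|\partial^\alpha m_j(\xi)|\lesssim_\alpha \langle\xi\rangle^{-|\alpha|}$ for every multi-index $\alpha$.

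Next I would dualize. For $\phi\in\cS$, using that $\cF$ and $\cF^{-1}$ are continuous linear bijections on $\cS$, the duality $\la\cF u,\psi\ra=\la u,\cF\psi\ra$ and the analogous identity for $\cF^{-1}$, and that multiplication by the smooth polynomially bounded function $m_j$ maps $\cS$ into $\cS$,
\[\la\cS_j f,\phi\ra=\la\cF^{-1}(m_j\hat f),\phi\ra=\la m_j\hat f,\cF^{-1}\phi\ra=\la\hat f,m_j\,\cF^{-1}\phi\ra=\la f,\cF(m_j\,\cF^{-1}\phi)\ra=:\la f,\phi_j\ra.\]
Since $f\in\cS'$ is continuous on $\cS$, it then suffices to prove $\phi_j\to\phi$ in $\cS$.

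For this, set $g:=\cF^{-1}\phi\in\cS$; by continuity of $\cF$ on $\cS$ it is enough to show $(1-m_j)g\to 0$ in $\cS$. Fix multi-indices $\alpha,\beta$. By Leibniz, $\partial^\alpha\big((1-m_j)g\big)$ is a finite sum of terms $(\partial^\gamma(1-m_j))(\partial^{\alpha-\gamma}g)$, each supported in $\{|\xi|\gtrsim 2^j\}$; using $|\partial^\gamma(1-m_j)(\xi)|\lesssim 1$ uniformly in $j$ together with the rapid decay $|\partial^{\alpha-\gamma}g(\xi)|\lesssim_N\langle\xi\rangle^{-N}$,
\[\sup_{\xi}\big|\xi^\beta\,\partial^\alpha\big((1-m_j)g\big)(\xi)\big|\lesssim_{\alpha,\beta,N}\sup_{|\xi|\gtrsim 2^j}\langle\xi\rangle^{|\beta|-N}\lesssim 2^{-j},\]
on choosing $N=|\beta|+1$. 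Hence every Schwartz seminorm of $(1-m_j)g$ tends to $0$, so $\phi_j=\cF(m_j\,\cF^{-1}\phi)\to\cF(\cF^{-1}\phi)=\phi$ in $\cS$, and therefore $\la\cS_j f,\phi\ra\to\la f,\phi\ra$ for every $\phi\in\cS$, which is exactly convergence in $\cS'$.

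There is no serious obstacle here — this is a classical fact — and the only point that needs a little care is the uniform-in-$j$ control of the derivatives of $m_j$ (equivalently, of the high-frequency remainder $1-m_j$), since it is precisely this that upgrades $\phi_j\to\phi$ from pointwise convergence to convergence in the Schwartz topology.
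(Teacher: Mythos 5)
Your argument is correct and complete. Note that the paper does not actually prove this proposition: it simply cites \cite{BahCheDan}, Proposition 2.12, so there is no in-text proof to compare against. Your route is the standard one underlying that reference: transfer the multiplier $m_j=\chi+\sum_{j'=0}^{j-1}\varphi(2^{-j'}\cdot)$ onto the test function by duality, and reduce the statement to $\cF\big(m_j\,\cF^{-1}\phi\big)\to\phi$ in $\cS$. The computation of the Fourier-side multiplier, the identification $1-m_j=\sum_{j'\ge j}\varphi(2^{-j'}\cdot)$ supported in $\{|\xi|\gtrsim 2^j\}$, and the dualization steps are all sound. You also correctly isolate the one point that genuinely requires care, namely the uniform-in-$j$ bound $|\partial^\gamma(1-m_j)(\xi)|\lesssim_\gamma 1$, which follows because at each $\xi$ only finitely many dyadic annuli contribute and each contributes $2^{-j'|\gamma|}\sim\langle\xi\rangle^{-|\gamma|}$ on its support; combined with the rapid decay of $g=\cF^{-1}\phi$ this gives decay of every Schwartz seminorm of $(1-m_j)g$, hence convergence in $\cS$ and therefore in $\cS'$. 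No gaps.
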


\begin{defn}
Let $1\leq p,q \leq +\infty$, and let $\kappa \in \RR$ and $w$ be an admissible weight. For a distribution $f\in \cS'(\RR^d;\RR)$ we define the (in-homogeneous) weighted Besov norm by
\[\|f\|_{B^\kappa_{p,q}(w)} = \Big\| \big(2^{\kappa j} \|\Delta_j f \|_{L^p(\RR^d,w)}\big)_{j\ge -1}\Big\|_{\ell^q(\NN\cup\{-1\})}\]

where $\|f\|_{L^p(\RR^d,w)} = \left(\int_{\RR^d} f(x)^p w(x)^p\dd x\right)^{\frac{1}{p}}$.
When $w\equiv 1$, we only write $B^\kappa_{p,q}$.
\end{defn}

We gather here some basic properties of (weighted) Besov spaces. 

\begin{prop}\label{prop:equivalent_norms}
Let $w$ be an admissible weight. 
\begin{itemize}
\setlength\itemsep{0.5em}
\item[{\rm (i)}]
The space
\[B^s_{p,q}(w) = \{f\in\cS'(\RR^d;\RR)\, : \, \|f\|_{B^\kappa_{p,q}(w)} < +\infty\}\]
does not depend on the choice of $\varphi$ and $\chi$.
\item[{\rm (ii)}] The two following quantities
$\|f\|_{B^\kappa_{p,q}(w)}$  and $ \|wf\|_{B^\kappa_{p,q}}$
are equivalent norms on $B^{\kappa}_{p,q}(w)$.
\item[{\rm (iii)}]
For all $n\ge 0$,
$\|D^n f\|_{B^\kappa_{p,q}(w)} \lesssim \|f\|_{B^{\kappa+n}_{p,q}(w)}$.
\item[{\rm (iv)}]
Let $1\leq p \leq p' \leq +\infty$ and $1\leq q \leq q' \leq +\infty$, then for all $\eps>0$,
\[\|f\|_{B^{\kappa-d\left(\frac{1}{p}-\frac{1}{p'}\right)}_{p',q}(w)} \lesssim \|f\|_{B^\kappa_{p,q}(w)} \lesssim \|f\|_{B^\kappa_{p',q}\left(\langle\cdot\rangle^{d\left(\frac{1}{p}-\frac{1}{p'}\right)+\eps}w\right)}\]
and
\[\|f\|_{B^{\kappa}_{p,\infty}(w)} \lesssim \|f\|_{B^{\kappa}_{p,q}(w)} \lesssim \|f\|_{B^{\kappa-\eps}_{p,q'}(w)}.\]
\item[{\rm (v)}]
For all $\eps,\delta>0$ and all $\kappa \in \RR$ and all $1\leq p,q \leq +\infty$,
\[B^\kappa_{p,q}(w)\quad \text{is compactly embedded in} \quad B^{\kappa-\eps}_{p,q}(\llrr^{-\delta} w)\]
\item[{\rm (vi)}]
Suppose that $\kappa>0$ and $\kappa \notin \NN$ and for $f: \RR^d \mapsto \RR$ let us define 
\[\|f\|_{\cC^\kappa(w)} = \sum_{|k|\leq [\kappa]} \sup_{x \in \RR^d} |w(x)\partial^k f (x)| + \sum_{|k| = [\kappa] } \sup_{0< |h|\leq 1}\sup_{x \in \RR^d} \frac{w(x)|\partial^k f(x+h) - \partial^k f(x)|}{|h|^{\kappa-[\kappa]}}.\]
Then 
\[\cC^\kappa(w) = \{f\, : \, \|f\|_{\cC^\kappa(w)}<+\infty\} = B^\kappa_{\infty,\infty}(w)\]
and furthermore $\|\cdot\|_{\cC^\kappa(w)}$ and $\|\cdot\|_{B^{\kappa}_{\infty,\infty}(w)}$ are equivalent norms on this space. 
\end{itemize}
\end{prop}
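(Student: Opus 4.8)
The plan is to prove part (ii) first, since every other assertion then follows by transferring to the unweighted case $w\equiv 1$, where (i), (iii), (iv) reduce to the Bernstein estimates and (v), (vi) are classical facts available in \cite{BahCheDan,Triebel2006}. For (ii), note that by definition $\|f\|_{B^\kappa_{p,q}(w)}$ is the $\ell^q$ norm of $(2^{\kappa j}\|w\,\Delta_j f\|_{L^p})_{j\ge-1}$ while $\|wf\|_{B^\kappa_{p,q}}$ is the $\ell^q$ norm of $(2^{\kappa j}\|\Delta_j(wf)\|_{L^p})_{j\ge-1}$, so it suffices to show that the conjugated projections $g\mapsto w\,\Delta_j(w^{-1}g)$ are bounded on $L^p(\RR^d)$ uniformly in $j$, together with an off-diagonal decay in $|j-j'|$ making the resulting matrix act boundedly on $\ell^q$ with the weight $2^{\kappa j}$. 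I would obtain this by expanding $w^{-1}$ against the (rescaled) kernel of $\Delta_j$ and using the admissibility conditions: condition (i) controls all derivatives of $w$, hence of $w^{-1}$, and condition \eqref{eq:lambda'} gives the polynomial comparison $w(x)\le c\,w(y)\langle x-y\rangle^{\lambda'}$, which together convert the convolution into an operator with a Schwartz-type kernel whose $L^1$ norm against $\langle\cdot\rangle^{\lambda'}$ is bounded uniformly in $j$ once the $2^{\kappa j}$ scaling is absorbed; the frequency mismatch $\Delta_j(w^{-1}g)$ versus $\Delta_j g$ is then handled by a paraproduct/commutator decomposition. This is essentially the argument of \cite[Ch.~6]{Triebel2006}, and I expect it to be the only genuinely delicate point of the Proposition.

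Granting (ii), part (i) is standard: if $\tilde\Delta_j$ is built from another admissible pair $(\tilde\chi,\tilde\varphi)$, the support conditions on the annuli force $\tilde\Delta_j=\tilde\Delta_j(\Delta_{j-1}+\Delta_j+\Delta_{j+1})$, and convolution against the (rescaled, hence $L^1(\langle\cdot\rangle^{\lambda'})$-bounded) kernels is bounded on $L^p(w)$ by the weighted Young inequality proven above, so summing the geometric series in $j$ gives the equivalence. Part (iii) follows from $\Delta_j D^n f=D^n\Delta_j f$ and Lemma \ref{lem:bernstein}: for $j\ge0$ the second estimate of that lemma gives $\|D^n\Delta_j f\|_{L^p(w)}\lesssim 2^{nj}\|\Delta_j f\|_{L^p(w)}$, for $j=-1$ the first estimate applies, and one then takes the $\ell^q$ norm against $2^{\kappa j}$.

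For (iv), the embedding lowering the regularity exponent is again the Bernstein estimate (i) of Lemma \ref{lem:bernstein}, $\|\Delta_j f\|_{L^{p'}(w)}\lesssim 2^{jd(1/p-1/p')}\|\Delta_j f\|_{L^p(w)}$, whence $2^{j(\kappa-d(1/p-1/p'))}\|\Delta_j f\|_{L^{p'}(w)}\lesssim 2^{j\kappa}\|\Delta_j f\|_{L^p(w)}$ and one sums in $\ell^q$. The companion inequality is H\"older in weighted $L^p$: with $1/r=1/p-1/p'$ one writes $\|w\,\Delta_j f\|_{L^p}\le\|\langle\cdot\rangle^{-d(1/p-1/p')-\eps}\|_{L^r}\,\|\langle\cdot\rangle^{d(1/p-1/p')+\eps}w\,\Delta_j f\|_{L^{p'}}$, the first factor being finite precisely because $r\big(d(1/p-1/p')+\eps\big)=d+\eps r>d$, and $\langle\cdot\rangle^{d(1/p-1/p')+\eps}w$ is still an admissible weight. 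The second line of (iv) is the elementary embedding $\ell^q\hookrightarrow\ell^\infty$ for the first inequality and, for the second, a H\"older summation in $j$ exploiting that $\sum_{j\ge-1}2^{-\eps js}<\infty$ for every $s>0$.

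Finally, for (v) I would first use (ii) (noting that $\langle\cdot\rangle^{-\delta}w$ is again admissible) to reduce to $w\equiv1$, and then combine the classical local compact embedding of $B^\kappa_{p,q}$ into $B^{\kappa-\eps}_{p,q}$ with the decay furnished by the factor $\langle\cdot\rangle^{-\delta}$: given a bounded sequence in $B^\kappa_{p,q}(w)$, each block $\Delta_j f_n$ is bounded and equicontinuous on compact sets by Lemma \ref{lem:bernstein}, so a diagonal Arzel\`a--Ascoli extraction yields convergence on compacts, while the tails in $j$ and in $|x|$ are made uniformly small by the loss of $\eps$ derivatives and the weight $\langle\cdot\rangle^{-\delta}$. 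For (vi), I would again reduce to $w\equiv1$ via (ii)---checking that $\partial^k(wf)$ differs from $w\,\partial^k f$ only by terms $\sum_{0\le k'<k}c_{k,k'}(\partial^{k-k'}w)\,\partial^{k'}f$ with $(\partial^{k-k'}w)/w$ bounded by admissibility, so the two weighted H\"older norms are comparable---and then invoke the classical Littlewood--Paley characterisation of the H\"older--Zygmund space $\cC^\kappa=B^\kappa_{\infty,\infty}$ for non-integer $\kappa$ from \cite{BahCheDan}; the hypothesis $\kappa\notin\NN$ is exactly what is needed for $B^\kappa_{\infty,\infty}$ to coincide with the classical H\"older space rather than the strictly larger Zygmund class.
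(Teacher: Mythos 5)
Your proposal is correct, and on the only point the paper actually proves it coincides with the paper's argument: the paper establishes just the weighted right-hand inequality of (iv), via Jensen's inequality applied to the probability measure proportional to $\langle x\rangle^{-(d+\eps)}\dd x$, which is the same computation as your H\"older estimate with the integrable factor $\|\langle\cdot\rangle^{-d(1/p-1/p')-\eps}\|_{L^r}$ (the exponent bookkeeping $r\,d(1/p-1/p')=d$, so $r(d(1/p-1/p')+\eps)>d$, matches the paper's $(d+\eps)(1/p-1/p')$ up to renaming $\eps$). For everything else — (i), (ii), (iii), (v), (vi) and the unweighted parts of (iv) — the paper simply refers to Triebel and to Lemma \ref{lem:bernstein}, whereas you supply the standard arguments: the conjugation $g\mapsto w\,\Delta_j(w^{-1}g)$ for (ii), almost-orthogonality of the two block families for (i) (note the overlap is $|j-j'|\le N$ for some $N$ depending on the annuli, not necessarily $\pm1$), weighted Bernstein for (iii) and the first half of (iv), Arzel\`a--Ascoli plus tail control for (v), and Leibniz with $\partial^k w\lesssim w$ for (vi). These sketches are all sound and at least as detailed as what the paper provides; you also correctly identify (ii) as the one genuinely delicate point, which the paper does not prove either.
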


\begin{proof}
We only prove the weighted inequality in the fourth point, and we refer \cite{Triebel2006} and the references therein for the other ones. The first and the third inequalities are  direct consequences of Lemma \ref{lem:bernstein}. For the second one, let us take $1\leq p< p' <+\infty$. We have, thanks to Jensen inequality, for any $\eps>0$
\begin{align*}
    \|\Delta_j f\|_{L^p(w)}^{p'} 
        \lesssim_{d,\eps} &
    \int_{\RR^d} |\Delta_j f(x)|^{p'} w(x)^{p'} \la x \ra^{\frac{(d+\eps)p'}{p}}\la x \ra^{-\frac{(d+\eps)p'}{p'}}\dd x\\ 
    =&
     \int_{\RR^d} |\Delta_j f(x)|^{p'} \left(w(x) \la x \ra^{(d+\eps)\left(\frac{1}{p} - \frac{1}{p'}\right)}\right)^{p'}\dd x .
    \end{align*}
    The constant in the previous inequality does not depends on $p'$. This gives
    \[\|\Delta_j f\|_{L^p(w)} \lesssim_{d,\eps}\|\Delta_j f\|_{L^{p'}\left(\llrr^{(d+\eps)\left(\frac{1}{p}-\frac{1}{p'}\right)} w\right)}.\]
\end{proof}

Finally, in order to deal with product of elements in Besov space, we give the following result, which can be proved thanks to standard techniques (see \cite{BahCheDan} Lemma 2.69 and 2.84 and \cite{Mourrat2017Global} Theorem 3.17 and Corollary 3.19 and 3.21). Note that it mostly relies on H\"older and Young inequality, and therefore is available in the context of weighted spaces. 

\begin{prop}\label{prop:bony_estimates}[Corollary 2.86 in \cite{BahCheDan} and Corollary 3.19 in \cite{Mourrat2017Global}] \label{prop:bony}
Let $w$ be an admissible weight, $\kappa_2 \leq \kappa_1 $ with $\kappa_1 \ge 0$ and suppose that $ \kappa_1 + \kappa_2 > 0$, and let $1\leq p, p_1,p_2,q \leq +\infty$ 
such that $\frac{1}{p}=\frac{1}{p_1}+\frac{1}{p_2}$. Let $\eps,\delta >0$. Then for all $f\in \cB^{\kappa_1}_{p_1,q}(w)$ and all $g\in \cB^{\kappa_2}_{p_2,q}(w)$,
\begin{itemize}
\setlength\itemsep{0.5em}
    \item[{\rm (i)}] $\left(\cS_j f \cS_j g\right)_{j\ge 0}$ converges in $B^{\kappa_2 - \eps}_{p,q}(\llrr^{-\delta} w)$ to a limit in $B^{\kappa_2}_{p,q}(w)$.
    \item[{\rm (ii)}]
    We have
    \[\left\|\lim_{j} \cS_j f \cS_j g\right\|_{B^{\kappa_2}_{p,q}(w)} \lesssim \left\|f \right\|_{B^{\kappa_1}_{p,q}(w)} \left\|  g\right\|_{B^{\kappa_2}_{p,q}(w)}\]
    \item[{\rm (iii)}]
    When $\kappa_2 \ge 0$,  $\lim_{j} \cS_j f \cS_j g= fg$ the standard point wise product.
\end{itemize}
\end{prop}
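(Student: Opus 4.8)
The plan is to run the classical Bony paraproduct argument, carrying the weight $w$ through every estimate by means of the admissibility bound \eqref{eq:lambda'}. Writing the truncated product as $\cS_j f\,\cS_j g=\sum_{i,i'\le j-1}\Delta_i f\,\Delta_{i'}g$ and rearranging the double sum as $j\to\infty$, one splits it into the two paraproducts $T_f g=\sum_i\cS_{i-1}f\,\Delta_i g$, $T_g f=\sum_i\Delta_i f\,\cS_{i-1}g$ and the resonant term $R(f,g)=\sum_{|i-i'|\le1}\Delta_i f\,\Delta_{i'}g$; by Proposition \ref{prop:equivalent_norms}(ii) it is enough to control the $B^{\kappa_2}_{p,q}$ norm of $w$ times each of these three series.

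The two workhorse estimates I would establish first are weighted versions of Bernstein's inequality: $\|\Delta_k h\|_{L^{p'}(w)}\lesssim 2^{kd(1/p-1/p')}\|h\|_{L^p(w)}$ and, in particular, $\|\Delta_k h\|_{L^p(w)}\lesssim\|h\|_{L^p(w)}$, uniformly in $k\ge-1$. These follow from Lemma \ref{lem:bernstein}: writing $\Delta_k h=K_k*h$ with $K_k=2^{kd}K(2^k\cdot)$ and using $w(x)\le c\,w(y)\langle x-y\rangle^{\lambda'}$ bounds $|w\,\Delta_k h|$ pointwise by the convolution of $w|h|$ with the rescaled kernel $2^{kd}|K(2^k\cdot)|\langle\cdot\rangle^{\lambda'}$, whose $L^1$ norm is bounded uniformly for $k\ge-1$, after which the unweighted Bernstein inequality applies. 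The same device gives $\|\cS_{i-1}f\|_{L^\infty(w)}\lesssim\sum_{i'\le i-2}2^{i'd/p_1}\|\Delta_{i'}f\|_{L^{p_1}(w)}\lesssim\|f\|_{B^{\kappa_1}_{p_1,q}(w)}$ uniformly in $i$, where the last step uses $\kappa_1\ge0$.

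With these in hand the three pieces are handled in the usual way. For $T_f g$, the spectrum of $\cS_{i-1}f\,\Delta_i g$ lies in a dyadic annulus of radius $\sim2^i$, so $\Delta_k(\cS_{i-1}f\,\Delta_i g)$ vanishes unless $|i-k|\le N_0$ for a fixed $N_0$; combining the weighted Young bound, the pointwise control of $w\,\cS_{i-1}f$ above, and H\"older with $1/p=1/p_1+1/p_2$ (placing the weight on the high-frequency factor $\Delta_i g$) yields $2^{\kappa_2 k}\|\Delta_k(T_f g)\|_{L^p(w)}\lesssim\|f\|_{B^{\kappa_1}_{p_1,q}(w)}\sum_{|i-k|\le N_0}2^{\kappa_2 i}\|\Delta_i g\|_{L^{p_2}(w)}$, whose $\ell^q_k$ norm is $\lesssim\|f\|_{B^{\kappa_1}_{p_1,q}(w)}\|g\|_{B^{\kappa_2}_{p_2,q}(w)}$. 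The term $T_g f$ is symmetric, using $\kappa_1\ge\kappa_2$ and $\kappa_1\ge0$ to absorb $\cS_{i-1}g$. For $R(f,g)$, a summand $\Delta_i f\,\Delta_{i'}g$ with $|i-i'|\le1$ has spectrum in a ball of radius $\sim2^i$, so $\Delta_k$ only sees scales $i\gtrsim k$; estimating each block and summing, the geometric series converges and is controlled by the product of norms precisely because $\kappa_1+\kappa_2>0$. Part (i) then follows from these uniform bounds and the compact embedding of Proposition \ref{prop:equivalent_norms}(v): the partial sums $\cS_j f\,\cS_j g$ are Cauchy in $B^{\kappa_2-\eps}_{p,q}(\llrr^{-\delta}w)$, and lower semicontinuity of the $B^{\kappa_2}_{p,q}(w)$ norm puts the limit in $B^{\kappa_2}_{p,q}(w)$ with the bound of part (ii). For part (iii), when $\kappa_2\ge0$ both $\cS_j f$ and $\cS_j g$ converge in weighted H\"older--Besov spaces of honest locally bounded functions, and one identifies the limit with the pointwise product by testing against Schwartz functions together with density.

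The step I expect to be the main obstacle is the bookkeeping of the weight: one has to verify that at every dyadic scale the admissibility inequality \eqref{eq:lambda'} lets $w$ be absorbed into the convolution kernel with a constant independent of the scale, and that H\"older can always be arranged so the weight lands on the factor for which a weighted bound is available. This is exactly what makes the otherwise routine paraproduct estimates carry over verbatim to the weighted setting, as in \cite{BahCheDan} and \cite{Mourrat2017Global}.
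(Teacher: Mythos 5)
The paper gives no proof of this proposition at all: it is quoted verbatim from Corollary 2.86 of \cite{BahCheDan} and Corollary 3.19 of \cite{Mourrat2017Global}, with only the remark that the arguments rest on H\"older and Young inequalities and therefore survive the weight. Your Bony decomposition into $T_fg$, $T_gf$ and the resonant term is exactly the route those references take, so in spirit you are reconstructing the intended proof. However, one step fails as written. You bound $\|\cS_{i-1}f\|_{L^\infty(w)}\lesssim\sum_{i'\le i-2}2^{i'd/p_1}\|\Delta_{i'}f\|_{L^{p_1}(w)}$ and assert that the last sum is controlled by $\|f\|_{B^{\kappa_1}_{p_1,q}(w)}$ uniformly in $i$ ``using $\kappa_1\ge0$''. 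Since $\|\Delta_{i'}f\|_{L^{p_1}(w)}\lesssim 2^{-\kappa_1 i'}\|f\|_{B^{\kappa_1}_{p_1,q}(w)}$, that series behaves like $\sum_{i'\le i-2}2^{i'(d/p_1-\kappa_1)}$, which diverges as $i\to\infty$ whenever $\kappa_1\le d/p_1$; the hypothesis $\kappa_1\ge0$ is not sufficient for finite $p_1$. The standard repair, consistent with your own parenthetical ``H\"older with $1/p=1/p_1+1/p_2$'', is not to pass to $L^\infty$ at all: estimate $\|\cS_{i-1}f\,\Delta_i g\|_{L^p(w)}\le\|\cS_{i-1}f\|_{L^{p_1}}\|\Delta_i g\|_{L^{p_2}(w)}$ and use $\|\cS_{i-1}f\|_{L^{p_1}}\lesssim\sum_{i'}2^{-\kappa_1 i'}\bigl(2^{\kappa_1 i'}\|\Delta_{i'}f\|_{L^{p_1}}\bigr)\lesssim\|f\|_{B^{\kappa_1}_{p_1,q}}$, which is where $\kappa_1\ge0$ genuinely enters (the endpoint $\kappa_1=0$ with $q>1$ still needs the usual extra care).

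A second, related point is the weight bookkeeping you yourself identify as the crux: since $\|uv\|_{L^p(w)}=\|w\,uv\|_{L^p}$, H\"older lets the weight sit on exactly one of the two factors, so measuring $\cS_{i-1}f$ in $L^\infty(w)$ \emph{and} $\Delta_i g$ in $L^{p_2}(w)$ would produce $w^2$ on the product. The consistent reading (and what \cite{Mourrat2017Global} actually proves, with a product of two weights) is that in each of the three pieces the weight is carried by the factor whose regularity index survives onto the product, the other factor being taken in the corresponding unweighted Lebesgue space. Apart from these two points, your outline — uniform bounds in $B^{\kappa_2}_{p,q}(w)$, convergence in the weaker compactly embedded space (for which you should invoke convergence in $\cS'$ to identify the limit, since compactness alone only yields subsequential limits), $\kappa_1+\kappa_2>0$ entering only through the resonant term, and the identification with the pointwise product when $\kappa_2\ge0$ — matches the cited proofs.
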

 
 \begin{rem}\label{def:product}
 The previous proposition shows that the limit does not depend on the choice of the blocks, and therefore it extends canonically the notion of product of functions to a product of distributions, as soon as $\kappa_1 + \kappa_2 >0$, with $\kappa_1\geq 0$. For this reason, we will denote by $fg = \lim_{j} \cS_j f \cS_j g$, and this is a bi-linear functional from $B^{\kappa_1}_{p_1,q}(w) \times B^{\kappa_2}_{p_2,q}(w)$ to $B^{\kappa_2}_{p,q}(w)$.
 \end{rem}

\subsection{Heat kernel estimates}\label{sub:heat_kernel}

In order to deal with heat kernel estimates on weighted Besov spaces, we first need some heat kernel estimates for function whose Fourier transform has a support in an annulus. In order to deal with non-Gaussian noise, we need to consider heat semi-group for fractional Laplacian. For a full study of the fractional Laplacian, we refer to \cite{kwasnickiTenEquivalentDefinitions2017}.  Here we just define the fractional Laplacian for smooth functions.

\begin{defn}
Let $\alpha \in (0,2]$. For any function $f \in \cS(\RR^d;\RR)$ we define the fractional Laplace operator $\Delta^{\frac{\alpha}{2}} = -(-\Delta)^{\frac{\alpha}{2}}$ by
\[\Delta^{\frac{\alpha}{2}} f = \cF^{-1}(-|\cdot|^\alpha \hat{f}(\cdot)).\]
Furthermore, we define the semi-group associated to $\Delta^{\frac{\alpha}{2}}$ \[P^{\frac{\alpha}{2}}_t f = \cF^{-1}\left(e^{-|\cdot|^\alpha t} \hat{f}(\cdot)\right).\]
Finally we extend this definition to the whole space $\cS'$ by the standard procedure.
\end{defn}

Note that when $\alpha = 2$, the previous definition gives the standard Laplace operator. When this is the case we simply write $P$ instead of $P^1$.

\begin{prop}\label{prop:heat_kernel_estimates}
Let $w$ be an admissible weight and $\lambda'$ be defined as in \eqref{eq:lambda'}. Let $\alpha \in (0,2]$.  Let $\cA=\{\xi \in \RR^d\, :\, c_1\leq |\xi| \leq c_2\}$ be an annulus, let $a\ge 1$ and let $f :\RR^d \to \RR^d$ be a function such that $supp \hat f  \subset a\cA$. Let $0<s\leq t$. There exists a constant $c>0$ such that for all $p\ge 1$, 
\[\|P^{\frac{\alpha}{2}}_t f\|_{L^p(w)} \lesssim e^{-c t a^\alpha } \|f\|_{L^p(w)},\]
and for all $\rho\ge 0$,
\[\|(P_t - P_s )f\|_{L^p(w)} \lesssim a^{-\alpha\rho}\left|\frac{1}{s^\rho}-\frac{1}{t^\rho}\right| \|f\|_{L^p(w)}.\]
Finally, for all $s \leq u \leq \tau' \leq \tau$, and for all $\rho>0$,
\begin{multline*}
    \Big\|\big((P_{\tau-s}-P_{\tau-u})-(P_{\tau'-s}-P_{\tau'-u})\big)f\Big\|_{L^p(w)}\\ 
        \lesssim 
    a^{-\alpha\rho}\left(\frac{1}{(\tau'-u)^\rho} - \frac{1}{(\tau' - s)^\rho} - \frac{1}{(\tau-u)^\rho} + \frac{1}{(\tau - s)^\rho}\right) \|f\|_{L^p(w)}
    \end{multline*}
\end{prop}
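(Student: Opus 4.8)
The strategy is to reduce all three bounds to one uniform decay estimate for suitably rescaled convolution kernels, and then to recover the precise algebraic factors in $t,s$ (respectively $\tau,\tau',u,s$) by the fundamental theorem of calculus.

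First I would fix a smooth radial localiser $\psi$ with $\psi\equiv1$ on $\cA$ and $\mathrm{supp}\,\psi\subset\{\xi:c_1'\le|\xi|\le c_2'\}$ for some $0<c_1'<c_2'$. If $\mathrm{supp}\,\hat f\subset a\cA$ with $a\ge1$, then for every integer $k\ge0$ and every $\sigma>0$ one has $\big(\Delta^{\frac\alpha2}\big)^kP^{\frac\alpha2}_\sigma f=m^k_{a,\sigma}*f$ with $m^k_{a,\sigma}:=\cF^{-1}\big(\psi(a^{-1}\cdot)(-|\cdot|^\alpha)^k e^{-|\cdot|^\alpha\sigma}\big)$, and a change of variables gives $m^k_{a,\sigma}(x)=a^{d+k\alpha}h^k_{a^\alpha\sigma}(ax)$, where $h^k_\mu:=\cF^{-1}\big(\eta_k e^{-|\cdot|^\alpha\mu}\big)$ and $\eta_k:=\psi\cdot(-|\cdot|^\alpha)^k$ is smooth and supported in the \emph{fixed} annulus. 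By the weighted Young inequality proved in Section~\ref{sub:weighted_lebesgue} and the fact that $\la a^{-1}y\ra^{\lambda'}\le\la y\ra^{\lambda'}$ for $a\ge1$, $\lambda'\ge0$, this yields
\[\big\|\big(\Delta^{\frac\alpha2}\big)^kP^{\frac\alpha2}_\sigma f\big\|_{L^p(w)}\;\le\;\|m^k_{a,\sigma}\|_{L^1(\llrr^{\lambda'})}\,\|f\|_{L^p(w)}\;\le\;a^{k\alpha}\,\|h^k_{a^\alpha\sigma}\|_{L^1(\llrr^{\lambda'})}\,\|f\|_{L^p(w)},\]
so everything comes down to the claim that $\|h^k_\mu\|_{L^1(\llrr^{\lambda'})}\lesssim_k e^{-c\mu}$ uniformly in $\mu\ge0$, with $c:=\tfrac12(c_1')^\alpha$. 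To prove the claim I would write $(1+|y|^2)^Nh^k_\mu(y)=c_N\cF^{-1}\big((\mathrm{Id}-\Delta_\xi)^N[\eta_k e^{-|\xi|^\alpha\mu}]\big)(y)$ for a large integer $N$ and estimate $\|(\mathrm{Id}-\Delta_\xi)^N[\eta_k e^{-|\xi|^\alpha\mu}]\|_{L^1_\xi}$; expanding by Leibniz and Fa\`a di Bruno, every term is a derivative of $\eta_k$ (smooth, supported in the fixed annulus) times a factor $\mu^j q(\xi)e^{-|\xi|^\alpha\mu}$ with $0\le j\le2N$ and $q$ smooth on the annulus (each extra power of $\mu$ arises when a derivative hits the exponential, and $\partial_\xi|\xi|^\alpha$ is smooth off the origin). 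On the support $e^{-|\xi|^\alpha\mu}\le e^{-(c_1')^\alpha\mu}$ and $\mu^j e^{-(c_1')^\alpha\mu}\le C_j e^{-c\mu}$ for all $\mu\ge0$, hence $\|(\mathrm{Id}-\Delta_\xi)^N[\eta_k e^{-|\xi|^\alpha\mu}]\|_{L^1_\xi}\lesssim_{N,k}e^{-c\mu}$, so $\|\llrr^{2N}h^k_\mu\|_{L^\infty}\lesssim_{N,k}e^{-c\mu}$; choosing $2N>\lambda'+d$ gives the claim. Consequently $\big\|(\Delta^{\frac\alpha2})^kP^{\frac\alpha2}_\sigma f\big\|_{L^p(w)}\lesssim_k a^{k\alpha}e^{-ca^\alpha\sigma}\|f\|_{L^p(w)}$, and the case $k=0$ is exactly the first assertion.

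For the second assertion I would use $P^{\frac\alpha2}_t-P^{\frac\alpha2}_s=\int_s^t\Delta^{\frac\alpha2}P^{\frac\alpha2}_r\,\dd r$, so by the $k=1$ bound $\|(P_t-P_s)f\|_{L^p(w)}\lesssim a^\alpha\int_s^t e^{-ca^\alpha r}\,\dd r\,\|f\|_{L^p(w)}$; the elementary inequality $v^{1+\rho}e^{-cv}\le C_\rho$ (with $v=a^\alpha r$) converts $a^\alpha e^{-ca^\alpha r}\le C_\rho a^{-\alpha\rho}r^{-1-\rho}$, and since $\rho\int_s^t r^{-1-\rho}\,\dd r=s^{-\rho}-t^{-\rho}$ this gives the bound for $\rho>0$. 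For the third assertion, set $F(r,r'):=P^{\frac\alpha2}_{r-r'}f$; applying the fundamental theorem of calculus in both variables gives
\[\big((P_{\tau-s}-P_{\tau-u})-(P_{\tau'-s}-P_{\tau'-u})\big)f=-\int_s^u\!\!\int_{\tau'}^{\tau}\partial_r\partial_{r'}F(r,r')\,\dd r\,\dd r',\]
and $\partial_r\partial_{r'}F(r,r')=-\big(\Delta^{\frac\alpha2}\big)^2P^{\frac\alpha2}_{r-r'}f$, so by the $k=2$ bound $\|\partial_r\partial_{r'}F(r,r')\|_{L^p(w)}\lesssim a^{2\alpha}e^{-ca^\alpha(r-r')}\|f\|_{L^p(w)}$. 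Using $v^{2+\rho}e^{-cv}\le C_\rho$ with $v=a^\alpha(r-r')$ this is $\lesssim_\rho a^{-\alpha\rho}(r-r')^{-2-\rho}\|f\|_{L^p(w)}$, and since $\partial_{r'}\partial_r(r-r')^{-\rho}=-\rho(\rho+1)(r-r')^{-\rho-2}$ one obtains $\rho(\rho+1)\int_s^u\int_{\tau'}^\tau(r-r')^{-\rho-2}\,\dd r\,\dd r'=(\tau'-u)^{-\rho}-(\tau'-s)^{-\rho}-(\tau-u)^{-\rho}+(\tau-s)^{-\rho}$, which is precisely the right-hand side of the third bound (both sides being $+\infty$ in the degenerate case $\tau'=u$).

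The main obstacle is the uniform-in-$\mu$ kernel estimate $\|h^k_\mu\|_{L^1(\llrr^{\lambda'})}\lesssim_k e^{-c\mu}$: one must simultaneously absorb the polynomial weight $\llrr^{\lambda'}$ (which forces the order $N$ of the Bessel operator to exceed $\tfrac12(\lambda'+d)$) and control the powers of $\mu$ produced when differentiating $e^{-|\xi|^\alpha\mu}$ in $\xi$ — these are harmless only because they are dominated by the exponential gain $e^{-(c_1')^\alpha\mu}$ coming from the annular support of $\psi$. Everything else — the rescaling of the kernels, the appeal to the weighted Young inequality with $a\ge1$, and the two applications of the fundamental theorem of calculus combined with the convexity of $x\mapsto x^{-\rho}$ — is routine bookkeeping.
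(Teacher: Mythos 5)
Your proof is correct and follows essentially the same route as the paper's: Fourier localisation on the annulus, rescaling to reduce to $a=1$, the weighted Young inequality with $\la a^{-1}x\ra^{\lambda'}\le\la x\ra^{\lambda'}$, a weighted $L^1$ kernel bound obtained by multiplying by $(1+|x|^2)^N$ and moving a Bessel-type operator onto the symbol, and finally the integral representation of the time increments combined with $v^{m}e^{-cv}\le C_m$ to trade exponential decay for the negative powers of $s,t$. The only (cosmetic) difference is that you first establish the clean single-time estimate for $(\Delta^{\frac\alpha2})^kP^{\frac\alpha2}_\sigma$ and then integrate the operator norm in time, whereas the paper keeps the time integral inside the symbol $E(s,t;\xi)=\int_s^t-|\xi|^\alpha e^{-r|\xi|^\alpha}\,\dd r$ and estimates the kernel of the difference operator directly.
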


\begin{proof}
We follow the proof of \cite{BahCheDan}, Lemma 2.4. The first affirmation is the result of this lemma but in the context of weighted spaces and of fractional operator. We also refer to \cite{Mourrat2017Global} Lemma 2.10 to a proof. Since the proof of the second and third points are similar to the proof of the first one, we will not detail the first point. For the second and third one, let us define
\[E(s,t;\xi) := e^{-|\xi|^\alpha t} - e^{-|\xi|^\alpha s}.\]
Note that in that case, we have 
\[(P_t - P_s )f(x) = \int_{\RR^d}E(s,t;\xi)\hat{f}(\xi) e^{i\xi x} \dd x.\]

Thanks to the hypothesis, there also exists a smooth function $\varphi : \RR^d \to \RR^d$ such that $\varphi \equiv 1$ on $\cA$ and $\mathrm{supp} \varphi \subset\{\xi \in \RR^d\, :\, \frac{c_1}{2} \leq |\xi| \leq 2 c_2\}$. 
Let us define for all $s\leq t$
\[K(s,t;x) = \int E(s,t;\xi) \varphi(\xi) e^{i\xi x}d \xi,\]
and $K_a(s,t,x) = a^{d} K(s,t,a x)$.
Hence we have 
\begin{align*}
(P_t-P_s)f(x)
=&    
    \int_{\RR^d} E(s,t;\xi)  \hat{f}(\xi) e^{i\xi \cdot x}   d\xi  \\
=& 
    \int_{\RR^d} E(s,t;\xi) \varphi(a^{-1} \xi)  \hat{f}(\xi) e^{i\xi \cdot x}   d\xi\\
=&
    a^d \int_{\RR^d} E(s,t;a\xi) \varphi(\xi)  \hat{f}(\xi) e^{i\xi \cdot a x}   d\xi.
\end{align*}

And since $E(s,t,a \xi) = E(a^\alpha s, a^\alpha t; \xi) $, one has 
\[(P_t - P_s)*f(x) = K_a(a^\alpha s, a^\alpha t;\cdot)*f(x).\]
Finally, thanks to Young inequality, one has
\[\|(P_t-P_s)*f \|_{L^p(w)} \leq \|K_a(a^\alpha s,a^\alpha t;\cdot)\|_{L^1(\llrr^{\lambda'})} \|f\|_{L^p(w)}.\]

Note also that since $\lambda' \ge 0 $ and $a\ge 1$, we have $\la a^{-1} x \ra^{\lambda'} \leq \la x \ra^{\lambda'} $, and 
\begin{align*}
    \|K_a(a^\alpha s,a^\alpha t;\cdot)\|_{L^1(\llrr^{\lambda'})} 
    =&
    a^{d}\int_{\RR^d} K_{a^\alpha s, a^\alpha t, a x} \la x \ra^{\lambda'} \dd x \\
    =& 
    \int_{\RR^d} K_{a^\alpha s, a^\alpha t, a x} \la a^{-1} x\ra^{\lambda'} \dd x \\
    =& 
    \leq  a^{d}\int_{\RR^d} K_{a^\alpha s, a^\alpha t, x} \la x \ra^{\lambda'} \dd x \\
    =& 
    \|K(a^\alpha s,a^\alpha t;\cdot)\|_{L^1(\llrr^{\lambda'})}
\end{align*}
Hence, it is enough to prove the proposition for $a = 1$ and for all $0<s'\leq t'$, and then specify $s'=a^\alpha s$ and $t'=a^\alpha t$.

Let $M\in \NN$ such that $2M>d+\lambda'$. In order to prove the proposition, it is then enough to bound $\big|(1+|x|^2)^{M} K(s,t;x) \big|$ by the wanted quantity $\frac{1}{s^\rho}-\frac{1}{t^\rho}$. We have
\begin{align*}
    \left(1+|x|^2\right)^M K(s,t;x) 
=& 
    \int_{\RR^d} \left( (1-\Delta)^M e^{ix\cdot}\right)(\xi) \varphi(\xi) E(s,t;\xi) d\xi\\
=&
    \int_{\RR^d}   e^{ix\xi} (1-\Delta)^M \big(\varphi(\cdot) E(s,t;\cdot)\big)(\xi) d\xi
\end{align*}
And thanks to the Fa\`a di Bruno formula, there exists constants $(c_{\nu,\kappa})$ where $\nu$ and $\kappa$ are multi-indices such that 
\[\left(1+|x|^2\right)^M K(s,t;x) = \sum_{|\nu| + |\kappa| \leq 2M} c_{\nu,\kappa} \int_{\RR^d} e^{i \xi x}  \partial^{\nu}\varphi(\xi) \partial^{\kappa} E(s,t;\xi) d\xi.\]
Hence, in order to prove the proposition, one only has to bound all the derivatives up to order $2M$ of $E(s,t;\cdot)$ for $\xi \in \cA$ by the wanted quantity $\frac{1}{s^{\rho}}-\frac{1}{t^\rho}$.

Note that the same strategy could be used in the case where we have the rectangular increment of the semi-group if we replace $E(s,t;\xi)$ by
\[F(s,u,\tau',\tau;\xi) := \left( e^{-|\xi|^\alpha (\tau-s)} - e^{-|\xi|^\alpha (\tau-u)} \right) - \left( e^{-|\xi|^\alpha (\tau'-s)} - e^{-|\xi|^\alpha (\tau'-u)}\right).\]
The same partial conclusion holds, one only has to control all the derivatives of $F(s,u,\tau',\tau;\cdot)$ up to order $2M$ for $\xi \in \cA$.

Furthermore, observe  that 
\[E(s,t;\xi) = \int_s^t -|\xi|^\alpha e^{-r |\xi|^\alpha} \dd r.\]
Hence, by a direct induction and since $\xi \in \cA$, for every multiindexe $k=(k_1\cdots,k_d)\in \NN^d$, there exists a polynomial  
\[P_{k,\xi}(t) = \sum_{l=0}^{|k|} a_{l}^k(\xi) t^l,\]
where for all $l\in\{0,\cdots,n\}, $ $\xi \to a_l^k(\xi)$ are non-negative smooth functions on $\cA$, and for all $\xi \in \cA$, $a_n^k(\xi) \neq 0$ and 
such that 
\[\partial^k \big(|\cdot|^2 e^{-r |\cdot|^\alpha}\big)(\xi) = P_{k,\xi}(r)e^{-r|\xi|^\alpha}. \]
Furthermore, since $c_1\le|\xi|\leq c_2$, there exists a constant $c>0$ (depending on $\cA$) such that
\[\big|\partial^k \big(|\cdot|^\alpha e^{-r |\cdot|^\alpha}\big)(\xi)\big| \lesssim \tilde{P}_{k,\xi}(r) e^{-r |\xi|^2} \lesssim_{k,\cA} e^{-c r} \lesssim r^{-\rho-1}\]
for any $\rho\ge -1$. Hence
 \[|\partial^k E(s,t;\cdot)| \lesssim \int_s^t r^{-\rho} \dd r \lesssim \frac{1}{s^\rho} - \frac{1}{t^\rho}.\]
 With the previous discussion, this gives the wanted result, when we replace $t$ by $a^\alpha t$ and $s$ by $a^\alpha s$.
In order to deal with the last estimates, one only has to remember that 
\begin{align*}
    F(s,u,\tau',\tau; \xi) 
        = & 
    -\int_u^s |\xi|^\alpha \left( e^{-|\xi|^\alpha (\tau - r)} - e^{-|\xi|^\alpha (\tau' - r)}\right) \dd r \\
    =&
    \int_s^u \int_{\tau'-r}^{\tau - r} |\xi|^{2\alpha} e^{-|\xi|^\alpha v} \dd v \dd r
\end{align*}
The same argument as before gives the bound 
\[|\partial^k F(s,u,\tau',\tau; \cdot)| \lesssim \int_s^u \int_{\tau'-r}^{\tau - r}  e^{ - c v} \dd v \dd r.\]
Finally, for any $\rho \ge 0$,
\begin{align*}
|\partial^k F(s,u,\tau',\tau; \cdot)| 
    \lesssim & 
\int_s^u \int_{\tau'-r}^{\tau - r} v^{-(\rho+2)} \dd v \dd r
\\
\lesssim&
\frac{1}{(\tau'-u)^\rho} - \frac{1}{(\tau' - s)^\rho} - \frac{1}{(\tau-u)^\rho} + \frac{1}{(\tau - s)^\rho}
\end{align*}
Again, this give the wanted result when recalling that one must replace $\tau$,$\tau'$,$u$ and $s$ by $a^\alpha \tau$, $a^\alpha \tau'$, $a^\alpha u$ and $a^\alpha s$.
\end{proof}

Let us gives the following useful and straightforward corollary for the action of the fractional heat semi-group on weighted Besov spaces. Let us first remind a rather elementary be useful lemma (\cite{TindelDeya2009}, Lemma 4.4). For the sake of the reader, we provide a full proof of it. 

\begin{lem}\label{lem: 44}
Let $\rho\ge 0$ and $\theta\in[0,1]$. There is a constant $c>0$ such that for any $0 < s \leq t$,
\[\frac{1}{s^\rho} - \frac{1}{t^\rho} \leq c (t-s)^\theta s^{-(\rho + \theta)}.\]
Let $\rho\ge 0$ and $\theta,\theta' \in[0,1]$. There exists a constant such that
\[\frac{1}{(\tau'-u)^\rho} - \frac{1}{(\tau'-s)^\rho} - \frac{1}{(\tau-u)^\rho} + \frac{1}{(\tau-s)^\rho} \leq c (\tau-\tau')^\theta (u-s)^{\theta'} (\tau'-u)^{-(\rho + \theta + \theta')}\]
\end{lem}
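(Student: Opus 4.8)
The statement to prove is Lemma \ref{lem: 44}, the two elementary ``sum-to-integral'' comparisons for inverse powers that underpin the heat-kernel estimates. The plan is to reduce everything to monotonicity of $r\mapsto r^{-\rho}$ and to the mean value theorem, written in integral form. For the first inequality, write
\[
\frac{1}{s^\rho}-\frac{1}{t^\rho} = \rho\int_s^t r^{-\rho-1}\,\dd r .
\]
If $\theta=0$ the bound is immediate since the left side is nonnegative and bounded by $s^{-\rho}$ (indeed it equals $s^{-\rho}-t^{-\rho}\le s^{-\rho}$). If $\theta=1$, bound the integrand by its value at the left endpoint, $r^{-\rho-1}\le s^{-\rho-1}$ on $[s,t]$, to get $\rho(t-s)s^{-\rho-1}$. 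For general $\theta\in(0,1)$ one interpolates: either invoke the convexity/interpolation trick $a\wedge b \le a^\theta b^{1-\theta}$ applied to the two extreme bounds just obtained, namely $\tfrac1{s^\rho}-\tfrac1{t^\rho}\le \big(s^{-\rho}\big)^{1-\theta}\big(\rho(t-s)s^{-\rho-1}\big)^{\theta} = \rho^\theta (t-s)^\theta s^{-(\rho+\theta)}$, or equivalently note $r^{-\rho-1}\le r^{-\rho-\theta}\,r^{\theta-1}$ is not quite what is wanted and the cleaner route really is the two-endpoint interpolation. This disposes of the first claim with $c=\rho^\theta\vee 1$ (or simply an absolute constant depending on $\rho$, which is fine here).

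For the second inequality, the natural approach is to iterate the first one. Fix $s\le u\le \tau'\le \tau$ and set, for $r$ ranging over $[s,u]$, the ``double difference'' as an integral of an integral: using $\tfrac{1}{(\tau-r)^\rho}$ etc., one has
\[
\frac{1}{(\tau'-u)^\rho}-\frac{1}{(\tau'-s)^\rho}-\frac{1}{(\tau-u)^\rho}+\frac{1}{(\tau-s)^\rho}
= \rho^2\int_s^u\!\!\int_{\tau'-r}^{\tau-r} v^{-\rho-2}\,\dd v\,\dd r .
\]
(This is exactly the identity already used in the proof of Proposition \ref{prop:heat_kernel_estimates}, with $E$ replaced by the corresponding algebraic object; here it is a pure calculus identity about $r^{-\rho}$.) Now bound: on the inner integral, $v\in[\tau'-r,\tau-r]\subset[\tau'-u,\infty)$, and on the outer one $r\in[s,u]$. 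For the extreme case $\theta=\theta'=1$, bound $v^{-\rho-2}\le (\tau'-u)^{-\rho-2}$ throughout, giving $\rho^2(\tau-\tau')(u-s)(\tau'-u)^{-\rho-2}$, which is the claimed bound at $\theta=\theta'=1$. The cases $\theta=0$ or $\theta'=0$ are handled by instead collapsing one of the integrals back to an inverse power (i.e. performing only the $v$-integration, or only the $r$-integration) and reusing the first inequality of the lemma. Finally, for general $\theta,\theta'\in[0,1]$, interpolate between these four corner estimates exactly as in the first part, using $a\wedge b\le a^{\lambda}b^{1-\lambda}$ successively in the two variables.

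\textbf{Main obstacle.} There is no real obstacle: both inequalities are elementary. The only point requiring a little care is bookkeeping the interpolation so that the exponents come out exactly $(\tau-\tau')^\theta(u-s)^{\theta'}(\tau'-u)^{-(\rho+\theta+\theta')}$ for \emph{all} $(\theta,\theta')\in[0,1]^2$ simultaneously, rather than only at the corners; the clean way is to prove the two ``mixed-corner'' estimates ($\theta=1,\theta'=0$ and $\theta=0,\theta'=1$) and the two ``pure-corner'' estimates ($\theta=\theta'=0$, $\theta=\theta'=1$) and then apply the elementary interpolation inequality twice. One should also double-check the sign/monotonicity so that each of the four differences appearing is genuinely nonnegative (which follows from $v\mapsto v^{-\rho}$ being decreasing and convex), so that ``$\le$'' with absolute values is legitimate; this is where the ordering $s\le u\le\tau'\le\tau$ and $\rho\ge0$ is used. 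A remark that the constant $c$ may be taken to depend only on $\rho$ (indeed bounded for $\rho$ in compact sets) suffices for all later applications.
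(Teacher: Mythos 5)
Your proposal is correct and follows essentially the same route as the paper: represent the (double) difference of inverse powers as a single (resp. iterated) integral, bound it at the four corner cases $\theta,\theta'\in\{0,1\}$, and recover the general exponents by the elementary interpolation $a\wedge b\le a^{1-\lambda}b^{\lambda}$ applied once in each variable. The only discrepancies are cosmetic (your constants $\rho$ and $\rho^2$ versus the exact $\rho$ and $\rho(\rho+1)$ in the integral identities, which is immaterial since $c$ is allowed to depend on $\rho$).
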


\begin{proof}
Let us remark that 
\[\frac{1}{s^\rho} - \frac{1}{t^\rho} = (1+\rho)\int_s^t r^{-(\rho+1)}\dd r \leq (1+\rho)(t-s) s^{-(\rho+1)}.\]
The result follows by a standard interpolation between the two inequalities. 
For the second inequality, set 
\[B =\frac{1}{(\tau'-u)^\rho} - \frac{1}{(\tau'-s)^\rho} - \frac{1}{(\tau-u)^\rho} + \frac{1}{(\tau-s)^\rho}.\] We have, thanks to the same integral representation, 
\[B\lesssim (\tau'-u)^{-\rho},\]
\[B \lesssim (\tau - \tau') (\tau'-u)^{-(\rho+1)},\]
\[B \lesssim (u-s)(\tau'-u)^{-(\rho+1)},\]
and
\[B \lesssim (u-s)(\tau-\tau') (\tau'-u)^{-(\rho+2)}.\]
Let us suppose, without loss of generality that $\theta' \leq \theta$ that is $\theta' = \alpha \theta$ for some $0\leq \alpha<1$. We have, using the first and the last inequalities,
\[B \lesssim (\tau-\tau')^\theta (u-s)^{\theta} (\tau'-u)^{-(\rho + 2\theta)}.\]
Using the first and the second one, we have
\[B \lesssim (\tau-\tau')^\theta (\tau'-u)^{-(\rho + 2\theta)}.\]
We interpolate those last two inequalities to have
\[B \lesssim (\tau-\tau')^\theta (u-s)^{\alpha \theta} (\tau'-u)^{-(\rho + 2\alpha\theta + (1-\alpha)\theta)} = (\tau-\tau')^\theta (u-s)^{ \theta'} (\tau'-u)^{-(\rho + \theta + \theta')}.\]
\end{proof}

\begin{cor}\label{cor: heat kernel estimates}
Let $w$ be an admissible weight and $\alpha \in (0,2]$. Let $\kappa \in \RR$ and let $1 \leq p,q \leq +\infty$. Let $0\leq t$, then for all $\rho\ge 0$,
\[\|P^{\frac{\alpha}{2}}_t f\|_{B^{\kappa+\alpha\rho}_{p,q}(w)} \lesssim t^{-\rho}  \|f\|_{B^{\kappa}_{p,q}(w)}.\]
For all $\theta\in [0,1]$ and all $\rho>0$,
\[\|(P^{\frac{\alpha}{2}}_t-P^{\frac{\alpha}{2}}_s)f\|_{B^{\kappa+\alpha\rho}_{p,q}(w)} \lesssim |
(t-s)^\theta s^{-(\rho+\theta)}\|f\|_{B^{\kappa}_{p,q}(w)}.\]
Furthermore, for any $\theta,\theta'\in[0,1]$ and all $\rho\ge 0$,
$s \leq u < \tau' \leq \tau$,
\[\left\|\big((P^{\frac{\alpha}{2}}_{\tau - s}-P^{\frac{\alpha}{2}}_{\tau - u}) - (P^{\frac{\alpha}{2}}_{\tau' - s}-P^{\frac{\alpha}{2}}_{\tau' - u})\big)f\right\|_{B^{\kappa+\alpha\rho}_{p,q}(w)} \lesssim (\tau-\tau')^\theta (u-s)^{\theta'} (\tau'-u)^{-(\rho + \theta + \theta')}\|f\|_{B^{\kappa}_{p,q}(w)}.\]
\end{cor}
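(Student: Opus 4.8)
The plan is a routine Littlewood--Paley argument: decompose $f=\sum_{j\ge-1}\Delta_j f$, use that the Fourier multiplier $P^{\frac{\alpha}{2}}_t$ commutes with each block $\Delta_j$, apply Proposition \ref{prop:heat_kernel_estimates} to each block with $a=2^j$, and resum in $\ell^q(\NN\cup\{-1\})$.

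First I would record that $\Delta_j P^{\frac{\alpha}{2}}_t f = P^{\frac{\alpha}{2}}_t \Delta_j f$, that for $j\ge 0$ the distribution $\Delta_j f$ has Fourier support in the annulus $2^j\cA$ (so Proposition \ref{prop:heat_kernel_estimates} applies with $a=2^j\ge 1$), and that $\Delta_{-1}f$ has Fourier support in a fixed ball. For the first estimate, the first bound of Proposition \ref{prop:heat_kernel_estimates} gives, for $j\ge 0$, $\|P^{\frac{\alpha}{2}}_t\Delta_j f\|_{L^p(w)}\lesssim e^{-ct2^{j\alpha}}\|\Delta_j f\|_{L^p(w)}$; multiplying by $2^{j(\kappa+\alpha\rho)}$ and invoking the elementary inequality $\sup_{x\ge 0}x^\rho e^{-ctx}\lesssim_\rho t^{-\rho}$ with $x=2^{j\alpha}$ yields $2^{j(\kappa+\alpha\rho)}\|P^{\frac{\alpha}{2}}_t\Delta_j f\|_{L^p(w)}\lesssim t^{-\rho}\,2^{j\kappa}\|\Delta_j f\|_{L^p(w)}$. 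The block $j=-1$ is handled separately by a Bernstein-type argument as in Lemma \ref{lem:bernstein} (or the proof of Proposition \ref{prop:heat_kernel_estimates}), which gives $\|P^{\frac{\alpha}{2}}_t\Delta_{-1}f\|_{L^p(w)}\lesssim\|\Delta_{-1}f\|_{L^p(w)}$, bounded in turn by $t^{-\rho}\|\Delta_{-1}f\|_{L^p(w)}$ up to a constant once $t$ ranges over a bounded interval. Taking the $\ell^q$ norm in $j$ then produces the first claim.

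For the second estimate I would proceed identically, now starting from the second bound of Proposition \ref{prop:heat_kernel_estimates}, namely $\|(P^{\frac{\alpha}{2}}_t-P^{\frac{\alpha}{2}}_s)\Delta_j f\|_{L^p(w)}\lesssim 2^{-j\alpha\rho}\big(\frac{1}{s^\rho}-\frac{1}{t^\rho}\big)\|\Delta_j f\|_{L^p(w)}$ for $j\ge 0$: the weight $2^{j(\kappa+\alpha\rho)}$ exactly cancels the gain $2^{-j\alpha\rho}$, leaving $2^{j\kappa}\big(\frac{1}{s^\rho}-\frac{1}{t^\rho}\big)\|\Delta_j f\|_{L^p(w)}$, and the first inequality of Lemma \ref{lem: 44} turns $\frac{1}{s^\rho}-\frac{1}{t^\rho}$ into $(t-s)^\theta s^{-(\rho+\theta)}$. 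The block $j=-1$ is again bounded directly, using that $e^{-|\xi|^\alpha t}-e^{-|\xi|^\alpha s}$ together with its $\xi$-derivatives is controlled on a ball (and hence, after Lemma \ref{lem: 44}, by $(t-s)^\theta s^{-(\rho+\theta)}$). Resumming in $\ell^q$ gives the second estimate; the third one follows in exactly the same manner from the rectangular bound in Proposition \ref{prop:heat_kernel_estimates} combined with the second inequality of Lemma \ref{lem: 44}.

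I do not expect a genuine obstacle here --- the argument is bookkeeping around Proposition \ref{prop:heat_kernel_estimates} and Lemma \ref{lem: 44}. The two points that need care are: (i) the scalar inequality $x^\rho e^{-ctx}\lesssim_\rho t^{-\rho}$ that converts the exponential decay of each high-frequency block into the algebraic blow-up $t^{-\rho}$; and (ii) the separate treatment of the low-frequency block $j=-1$, which does not fall under the annulus hypothesis of Proposition \ref{prop:heat_kernel_estimates} and carries no decay in $t$, so that the estimates as stated are understood for $t$ (and $s$) in a bounded interval.
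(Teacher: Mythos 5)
Your argument is correct and is essentially the paper's own proof: a block-by-block application of Proposition \ref{prop:heat_kernel_estimates} with $a=2^j$, the elementary conversion $e^{-c2^{\alpha j}t}\lesssim 2^{-\alpha\rho j}t^{-\rho}$ for the first bound, Lemma \ref{lem: 44} for the second and third, and resummation in $\ell^q$. Your explicit handling of the low-frequency block $j=-1$ (and the attendant restriction to bounded time intervals) is a point the paper's proof passes over silently, but it does not change the route.
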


\begin{proof}
The first bound is a direct consequence of the first bound of Proposition \ref{prop:heat_kernel_estimates}. Indeed, for all $j\ge 0$,
\[\|\Delta_j P^{\frac{\alpha}{2}}_t f\|_{L^P(w)} = \| P_t (\Delta_j f)\|_{L^P(w)} \lesssim e^{-c2^{\alpha j} t} \|\Delta_j f\|_{L^P(w)} \lesssim 2^{-\alpha\rho j} t^{-\rho} \|\Delta_j f\|_{L^P(w)},\]
and the result follows via the definition of weighted Besov spaces. 
For the second bound, let us remark that we have for all $0 < s \leq t$, and thanks to Proposition \ref{prop:heat_kernel_estimates}, 
\[\|\Delta_j (P^{\frac{\alpha}{2}}_t - P^{\frac{\alpha}{2}}_s) f\|_{L^P(w)} = \lesssim \left(\frac{1}{s^\rho} - \frac{1}{s^\rho}\right) 2^{-\alpha\rho j} \|\Delta_j f\|_{L^P(w)} \lesssim 2^{-\alpha\rho j}(t-s)^\theta s^{-(\rho+\theta)} \|\Delta_j f\|_{L^p(w)}.\]
where the last inequality comes from the previous lemma. This allows us to derive the result by using the definition of $B$ spaces. 
Finally, note that we have thanks to Proposition \ref{prop:heat_kernel_estimates} and Lemma~\ref{lem: 44},
\[
    \Big\|\Delta_j\big((P^{\frac{\alpha}{2}}_{\tau-u}-P^{\frac{\alpha}{2}}_{\tau-s})-(P^{\frac{\alpha}{2}}_{\tau'-u}-P^{\frac{\alpha}{2}}_{\tau'-s})\big)f\Big\|_{L^p(w)} 
    \lesssim 2^{-\alpha\rho j}(\tau-\tau')^\theta (u-s)^{\theta'}  (\tau' - u)^{\rho+\theta + \theta'} \|\Delta_j f\|_{L^p(w)}.  
\]
And again one can conclude with the definition of the weighted Besov spaces.
\end{proof}

\section{Cauchy-Lipschitz theorem for mSHE in standard case.}\label{sec:CL-standard}

We give a short proof of local well-posedness of the mSHE in a simple context. For more on this, one can consult \cite{liuStochasticPartialDifferential2015} and the generalization for more irregular noise with more involve techniques \cite{Gubinelli2016Fourier} and \cite{Hairer2013Regularity}. Note that 

\begin{thm}
Let $\alpha \in (0,2]$. Let $\vartheta \in(0,\min(1,\alpha))$, let $\vartheta<\beta<\alpha-\vartheta$. Let $g\in \cC^2$ and let $u_0 \in \cC^\beta$ and $\xi \in \cC^{-\vartheta}$
There exists a unique local solution of the equation 
\[\partial_t u = \Delta^{\frac{\alpha}{2}} u + g(u) \xi\]
in the mild form
\[u(t,\cdot) = P^{\frac{\alpha}{2}}_tu_0 + \int_s^t P^{\frac{\alpha}{2}}_{t-s}\xi g(u(s,\cdot))\dd s.\]
\end{thm}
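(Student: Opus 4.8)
The plan is to carry out a standard Picard fixed-point argument in the space $\scC^\varsig_T\cC^\beta$ for a small time horizon $T$, mirroring the abstract machinery of Section \ref{sec:sewing} but in the much simpler situation where the field $g$ is a genuine $\cC^2$ function, so that no averaging operator and no sewing lemma are actually needed — the Volterra integral is just a Riemann integral. First I would fix $\rho = \frac{\beta+\vartheta}{2}$ and observe, exactly as in Proposition \ref{prop:regualrity of P as S xi}, that $S_t := P_t^{\frac{\alpha}{2}}\xi$ is a $\rho$-singular linear operator on $E := \cC^\beta$: this follows from Corollary \ref{cor: heat kernel estimates} together with the Bony product estimate of Proposition \ref{prop:bony}, which is available precisely because $\beta>\vartheta$ (so $\beta + (-\vartheta)>0$ with $\beta\geq 0$), giving $\|\xi f\|_{\cC^{-\vartheta}}\lesssim \|\xi\|_{\cC^{-\vartheta}}\|f\|_{\cC^\beta}$. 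The condition $\beta<\alpha-\vartheta$ guarantees $\rho<\alpha/2\le 1$, which is what keeps the time singularity integrable.

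Next I would set up the solution map
\[
\Gamma(\theta)_t := P^{\frac{\alpha}{2}}_t u_0 + \int_0^t S_{t-s}\, g(\theta_s)\dd s,\qquad t\in[0,T],
\]
acting on a ball in $\scC^\varsig_T\cC^\beta$ for a suitable $\varsig\in(0,\alpha/2-\rho)$, and check the two ingredients needed. The first is that $t\mapsto P^{\frac{\alpha}{2}}_t u_0 \in \scC^\varsig_T\cC^\beta$, which is exactly the content of the singular-Hölder estimate \eqref{eq: cont sing ineq} (Corollary \ref{cor: heat kernel estimates}) applied with $\rho=0$. The second is the Nemytskii-type bound: since $g\in\cC^2$, for $\theta\in\cC^\beta$ with $\|\theta\|_\infty\le R$ one has $\|g(\theta)\|_{\cC^\beta}\lesssim_R 1$ and $\|g(\theta)-g(\tilde\theta)\|_{\cC^\beta}\lesssim_R \|\theta-\tilde\theta\|_{\cC^\beta}$, using $\|g'\|_{L^\infty}$ and $\|g''\|_{L^\infty}$ on the relevant compact range (this is where $\beta<1$ and $\cC^2$ rather than $\cC^1$ matters, to control the Hölder seminorm of the composition — it is the same computation as in the proof of Proposition \ref{prop: inf dim reg of avg op}, with the weight $w\equiv 1$). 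Combining these with the $\rho$-singular bounds on $S$, a direct estimate shows
\[
\Big\|\int_0^\cdot S_{\cdot-s} g(\theta_s)\dd s\Big\|_{\scC^\varsig_T\cC^\beta}\lesssim_R T^{\alpha/2-\rho-\varsig},
\]
and similarly for the difference of two inputs, so $\Gamma$ is a contraction on a small ball of $\scC^\varsig_T\cC^\beta$ once $T$ is chosen small enough depending on $\|u_0\|_{\cC^\beta}$, $\|\xi\|_{\cC^{-\vartheta}}$, $\|g\|_{\cC^2}$. Banach's fixed point theorem then yields a unique local mild solution; I would note that uniqueness in the full (not-necessarily-small-ball) class follows by a standard patching/continuation argument on a possibly shorter interval, or by a Gronwall-type estimate absorbing the $T^{\alpha/2-\rho-\varsig}$ factor.

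The main obstacle — though a mild one — is the composition estimate $\theta\mapsto g(\theta)$ in $\cC^\beta$ with $\beta\in(0,1)$: one must carefully bound the Hölder seminorm $[g(\theta)-g(\tilde\theta)]_\beta$ by $\|\theta-\tilde\theta\|_{\cC^\beta}$ uniformly over a bounded set, which requires the second derivative of $g$ (hence $\cC^2$) and an elementary but slightly fiddly application of the mean value theorem to increments of $g\circ\theta$; everything else is bookkeeping with the heat-kernel estimates of Corollary \ref{cor: heat kernel estimates} and the choice of the exponents $\varsig,\rho$ so that $\alpha/2-\rho-\varsig>0$, which is possible exactly because $\beta+\vartheta<\alpha$. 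One small point to be careful about is that the mild formulation as stated uses $\int_s^t$ with an unspecified base point; I would read it as $\int_0^t$ (consistent with $u(0,\cdot)=u_0$) and make that explicit.
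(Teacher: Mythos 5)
Your proposal is correct and its substance coincides with the paper's proof in Appendix \ref{sec:CL-standard}: both rest on the Bony product estimate of Proposition \ref{prop:bony} (valid since $\beta>\vartheta$), the fractional Schauder estimates of Corollary \ref{cor: heat kernel estimates} (integrable in time since $\beta<\alpha-\vartheta$), the $\cC^2$ composition estimate on a bounded set, and a Banach fixed point on a small time interval. The one genuine difference is the ambient space: the paper runs the contraction in the plain space $C^0\big([0,T];\cC^\beta\big)$, which makes the argument shorter because no time-H\"older regularity of $t\mapsto P^{\frac{\alpha}{2}}_t u_0$ or of the integral term needs to be checked, whereas you work in the singular H\"older space $\scC^\varsig_T\cC^\beta$, importing the framework of Section \ref{sec:sewing}; your route costs a little extra bookkeeping with the exponents (and note that for general $\alpha$ the natural normalization of the singularity is $\rho=\frac{\beta+\vartheta}{\alpha}$ with time gain $T^{1-\rho-\varsig}$, rather than the $\alpha=2$ convention $\rho=\frac{\beta+\vartheta}{2}$ you use), but in exchange it gives time regularity of the solution for free. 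Your reading of the mild formulation as $\int_0^t$ and your remark that the Lipschitz constant of $\theta\mapsto g(\theta)$ in $\cC^\beta$ depends on the $\cC^\beta$-bound of the inputs (hence the restriction to a ball) are both correct and are in fact points the paper's own write-up glosses over.
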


\begin{proof}
First, let us take $u\in \cC^\beta$. We have
\[\|g(u(t,\cdot)\|_{\cC^{\beta}} \leq \|g\|_{\cC^{2}} \|u(t,.)\|_{\cC^\beta}. \]
For $u\in C([0,T];\cC^{\beta})$, let us remark that thanks to standard Bony estimates in Besov-H\"older spaces (Proposition \ref{prop:bony})
\[\|\xi g(u(t,\cdot)\|_{\cC^{-\vartheta}} \lesssim  \|g\|_{\cC^{2}} \|\xi\|_{\cC^{-\vartheta}} \|u(t,.)\|_{\cC^\beta}.\]
Finally for $0\leq s < t \le T $,
\[\|P_{t-s}\xi g(u(t,\cdot))\|_{\cC^{\beta}} \lesssim  \frac{1}{(t-s)^{\frac{\beta + \rho}{\alpha}}}\|g\|_{\cC^{2}} \|\xi\|_{\cC^{-\vartheta}} \|u(t,.)\|_{\cC^\beta}.\]
Hence, the application $\Gamma$
\[\Gamma(u)(t,x) = P_t u_0 + \int_0^t P_{t-s}\xi g(u(s,\cdot)) \dd s\]
is well-defined from $C^{0}([0,T];\cC^{\beta})$ to itself. Furthermore, 
Let us remark that for $u,v \in \cC^\beta$,
\[\|g(u) - g(v) \|_{\cC^{\beta}} \lesssim \|g\|_{\cC^2}\|u-v\|_{\cC^{\beta}},\]
hence for $u,v\in C^0([0,T];\cC^{\beta})$,
\begin{multline*}
\sup_{t\in[0,T]}\|\Gamma(u)(t,\cdot)-\Gamma(v)(t,\cdot) \|_{\beta}
\lesssim  \int_0^t \frac{1}{(t-s)^{\frac{\beta + \rho}{\alpha}}} \|g\|_{\cC^{2}} \|\xi\|_{\cC^{-\vartheta}} \|u(t,.) - v(t,.)\|_{\cC^\beta}  \dd s \\
\lesssim T^{1-\frac{\beta + \rho}{\alpha}} \sup_{t\in[0,T]}\|u(t,.) - v(t,.)\|_{\cC^\beta}.
\end{multline*}
Hence, be standard Schauder fixed point, for $T$ small enough, there is a unique $u\in C^{0}([0,T];\cC^\beta)$.
\end{proof}

\bibliographystyle{plain}

\end{document}